\newtheorem{theorem}{Theorem}[section]
\newtheorem{prop}[theorem]{Proposition}
\newtheorem{lemma}[theorem]{Lemma}
\newtheorem{cor}[theorem]{Corollary}
\newtheorem{conj}[theorem]{Conjecture}
\theoremstyle{definition}
\newtheorem{definition}[theorem]{Definition}
\newtheorem{observation}[theorem]{Observation}
\newtheorem{terminology}[theorem]{Terminology}
\newtheorem{remark}[theorem]{Remark}
\newtheorem{example}[theorem]{Example}
\newtheorem{problem}[theorem]{Problem}
\newtheorem{notation}[theorem]{Notation}
\theoremstyle{remark}
\definecolor{orange}{rgb}{.95,0.5,0}
\definecolor{light-gray}{gray}{0.75}
\definecolor{brown}{cmyk}{0, 0.8, 1, 0.6}
\definecolor{plum}{rgb}{.5,0,1}
\DeclareMathOperator{\PShv}{\sf PShv}
\DeclareMathOperator{\colim}{{\sf colim}}
\DeclareMathOperator{\Fun}{{\sf Fun}}
\DeclareMathOperator{\Map}{{\sf Map}}
\DeclareMathOperator{\Cat}{{\sf Cat}}
\DeclareMathOperator{\fCat}{{\sf fCat}}
\DeclareMathOperator{\fGpd}{{\sf fGpd}}
\DeclareMathOperator{\sfN}{\sf fN}
\DeclareMathOperator{\Shv}{\sf Shv}
\DeclareMathOperator{\op}{\mathsf{op}}
\DeclareMathOperator{\cls}{\mathsf{cls}}
\DeclareMathOperator{\Spaces}{\cS\mathsf{paces}}
\DeclareMathOperator{\Bord}{\cB{\sf ord}}
\DeclareMathOperator{\Morita}{{\sf Morita}}
\DeclareMathOperator{\Corr}{{\sf Corr}}
\DeclareMathOperator{\oo}{\infty}
\DeclareMathOperator{\tr}{\triangleright}
\newcommand{\lag}{\langle}
\newcommand{\rag}{\rangle}
\newcommand{\un}{\underline}
\newcommand{\la}{\leftarrow}
\newcommand{\xra}{\xrightarrow}
\newcommand{\xla}{\xleftarrow}
\def\cB{\mathcal B}\def\cC{\mathcal C}\def\cD{\mathcal D}
\def\cE{\mathcal E}\def\cF{\mathcal F}\def\cG{\mathcal G}\def\cH{\mathcal H}
\def\cJ{\mathcal J}
\def\cS{\mathcal S}
\def\cV{\mathcal V}\def\cX{\mathcal X}
\def\cY{\mathcal Y}
\def\GG{\mathbb G}
\def\TT{\mathbb T}
\def\sC{\mathsf C}
\def\sE{\mathsf E}
\def\sL{\mathsf L}
\def\sN{\mathsf N}
\def\bTheta{\mathbf\Theta}
\def\fB{\frak B}
\def\bcT{\boldsymbol{\mathcal T}}
\DeclareMathOperator{\btheta}{\boldsymbol{\Theta}}
\DeclareMathOperator{\id}{{\sf id}}
\DeclareMathOperator{\Gpd}{\sf Gpd}
\DeclareMathOperator{\pr}{\sf pr}
\def\bDelta{\mathbf\Delta}
\newcommand{\nGpd}{n\Gpd}
\begin{document}

\title{Flagged higher categories}

\author{David Ayala \& John Francis}

\address{Department of Mathematics\\Montana State University\\Bozeman, MT 59717}
\email{david.ayala@montana.edu}
\address{Department of Mathematics\\Northwestern University\\Evanston, IL 60208}
\email{jnkf@northwestern.edu}
\thanks{DA was supported by the National Science Foundation under award 1507704. JF was supported by the National Science Foundation under award 1508040.}

\begin{abstract}
We introduce \emph{flagged $(\infty,n)$-categories} and prove that they are equivalent to Segal sheaves on Joyal's category $\bTheta_n$. As such, flagged $(\infty,n)$-categories provide a model-independent formulation of Segal sheaves.
This result generalizes the statement that $n$-groupoid objects in spaces are effective, as we explain and contextualize.  
Along the way, we establish a useful expression for the univalent-completion of such a Segal sheaf.
Finally, we conjecture a characterization of flagged $(\infty,n)$-categories as stacks on $(\infty,n)$-categories that satisfy descent with respect to colimit diagrams that do not generate invertible $i$-morphisms for any $i$.

\end{abstract}

\keywords{$(\infty,n)$-categories, higher categories, flagged higher categories, Segal spaces, univalence, groupoid objects, Cech nerve.}

\subjclass[2010]{Primary 18A05. Secondary 55U35, 55P65.}

\maketitle

\tableofcontents

\section*{Introduction}

Many examples of $(\infty,n)$-categories of especial interest, even for $n=1$, are univalent-completions of naturally presented Segal sheaves on the category $\bTheta_n$.
The following replacements occur as univalent-completions:
\begin{itemize}
\item a group by its moduli space of torsors, which loses conjugation information within the group;

\item a ring by its category of modules, which remembers only its Morita-type;

\item a category by its idempotent completion;

\item a suitably connective sequence $(X_0\to \dots \to X_n)$ of spaces by the space $X_n$ alone;

\item a smooth closed manifold by its smooth h-cobordism-type, which loses simple-homotopy-type.  
\end{itemize}
In light of this univalent-completion construction, it is in order to find a conceptual, model-independent, formulation of Segal sheaves on $\bTheta_n$.  
Our purpose for such a formulation is to accommodate native examples of such entities, and also to house such entities in a framework that can borrow results from established $(\infty,n)$-category theory (even for $n=1$).

In this paper we give a model-independent formulation of Segal sheaves on $\bTheta_n$, a corollary concerning higher groupoid objects, and conjecture another model-independent formulation; we state these three assertions informally here.
\begin{enumerate}
\item (Theorem~\ref{main.result.1}) 
A Segal sheaf on $\bTheta_n$ is equivalent to a flag $\cC_0 \to \cC_1\to \dots \to \cC_n$ in which each $\cC_i$ is an $(\infty,i)$-category and, for each $0\leq k\leq i\leq j\leq n$, the functor $\cC_i\to \cC_{j}$ is surjective on spaces of $k$-morphisms with specified source-target.

\item (Corollary~\ref{main.corollary})
An $n$-groupoid object in $\Spaces$ is precisely a flag $X_0 \to X_1 \to \dots \to X_n$ of spaces for which, for each $0\leq i\leq j\leq n$, the map $X_i \to X_j$ is $i$-connective.  

\item (Conjecture~\ref{main.result.2})
A Segal sheaf on $\bTheta_n$ is a stack on the $\infty$-category $\Cat_n$ of $(\infty,n)$-category that satisfies descent with respect to those colimit diagrams that do not generate invertible $i$-morphisms for any $0\leq i \leq n$.  

\end{enumerate}

\subsection*{Conventions}
We make use of Lurie's work~\cite{HTT}, as well as Joyal's work~\cite{joyal1}, for the foundations of $\infty$-category theory -- there, quasi-category theory.  
This includes a comprehensive theory of colimits, limits, (space-valued) presheaves,  the Yoneda embedding as a colimit completion, the unstraightening construction, and Bousfield localizations among presentable $\infty$-categories.
We assume the reader has operational, though not necessarily technical, acquaintance with these features of $\infty$-category theory.
We also call on some more specific features of the $\infty$-category $\Spaces$, which are consequences of the fact that it is an $\infty$-topos in the sense of~\S6 of~\cite{HTT}.  We assume the reader has a working acquaintance with Joyal's category $\bTheta_n$, as it is presented in Berger's work~\cite{berger} as well as Rezk's work~\cite{rezk-n}.  We assume the reader has a working acquaintance with $(\infty,n)$-categories as developed by Rezk in~\cite{rezk-n}.

We make use of the following notation.
\begin{notation}\label{d5}
\begin{itemize}
\item[]

\item
We may denote the colimit of a presheaf $\cF\colon \cC^{\op} \to \Spaces$ as $|\cF| := \colim(\cF)$.  

\item
Let $\cC$ be an $\infty$-category.
By right Kan extension, each presheaf $(\cC^{\op} \xra{\cF} \Spaces)\in \PShv(\cC)$ extends along the Yoneda embedding as a functor
\[
\cF\colon \PShv(\cC)^{\op}
\xra{~\Map(-,\cF)~}
\Spaces
~,\qquad
\cE  \mapsto \cF(\cE):=\Map(\cE,\cF)~.
\]

\end{itemize}

\end{notation}

\subsection{Setup and main results}

We give a definition of Joyal's category $\btheta_n$~(\cite{joyal.theta}), which follows Definition~3.9 in~\cite{berger}.
\begin{definition}\label{d7}
The category $\bTheta_n$, and its subcategory $\bTheta_n^{\sf cls}\hookrightarrow \bTheta_n$ of \emph{closed} morphisms, are defined by induction on $n$ as follows.  
\begin{itemize}
\item
For $n<0$, $\bTheta_{n} := \emptyset$.
Assume $n\geq 0$.
An object in the category $\bTheta_n$ is a pair of objects $[p]\in \bDelta$ and 
$(S_1,\dots,S_p)\in (\bTheta_{n-1})^{\times p}$; such an object is typically denoted $[p](S_1,\dots,S_p)$.
A morphism in $\bTheta_n$ from $[p](S_1,\dots,S_p)$ to $[q](T_1,\dots,T_q)$ is a morphism $ [p]\xra{\sigma}[q]$ in $\bDelta$ together with, for each $0<i\leq p$ and $\sigma(i-1)<j\leq \sigma(i)$, a morphism $S_i \xra{\tau_{ij}} T_j$ in $\bTheta_n$.  
Composition of morphisms in $\bTheta_n$ is given by composing morphisms in $\bDelta$ and composing collections of morphisms in $\bTheta_{n-1}$.  

\item
For each $n\geq 0$, the subcategory $\bTheta_n^{\sf cls}\hookrightarrow \bTheta_n$ contains all objects, and for $n>0$, only those morphisms $[p](S_1,\dots,S_p) \xra{\bigl(\sigma , (\tau_{ij})\bigr)}[q](T_1,\dots,T_q)$ for which $\sigma$ is a consecutive inclusion and each $\tau_{ij}$ is a morphism in $\bTheta_{n-1}^{\sf cls}$.  

\end{itemize}
For $0\leq k\leq n$, the \emph{$k$-cell} is $c_k:=[1](c_{k-1})$ for $k>0$ and $c_0 = [0]$.  

\end{definition}

\begin{remark}\label{r7}
Consider the $(2,1)$-category $\Cat_n^{\sf strict}$ of \emph{strict $n$-categories}, by which it is meant the $(2,1)$-category of ordinary categories enriched over the Cartesian monoidal $(2,1)$-category of strict $(n-1)$-categories.  
There is a fully faithful functor $\bTheta_n \hookrightarrow \Cat_n^{\sf strict}$, as established in~\cite{berger}.
The \emph{nerve} functor is the restricted Yoneda functor along this fully faithful functor:
\[
\Cat_n^{\sf strict}
\longrightarrow
\PShv(\bTheta_n)
~,\qquad
\sC  \mapsto   \bigl( T\mapsto \Cat_n^{\sf strict}(T , \sC) \bigr)  ~.
\]

\end{remark}

\begin{notation}\label{d3}
Let $0\leq i\leq n$.
The strict $i$-category $c_{i-1} \wr \sE(c_1)$ corepresents an invertible $i$-morphism in a strict $n$-groupoid.
The presheaf on $\bTheta_n$ which is its nerve is given the same notation. This is a special case of Definition~\ref{d1}.
\end{notation}

Notice the morphism between presheaves on $\bTheta_n$,
\begin{equation}\label{e33}
c_{i-1} \wr \sE(c_1) 
\longrightarrow
c_{i-1}     ~,
\end{equation}
corepresenting identity $i$-morphisms as invertible $i$-morphisms within strict $n$-categories.

\begin{definition}\label{def.segal.cov}
\begin{itemize}
\item[~]

\item
A \emph{Segal cover in $\bTheta_n$} is a colimit diagram $\cJ^{\tr}\to \bTheta_n^{\sf cls}$.
The $\infty$-category of \emph{Segal sheaves (on $\bTheta_n$)} is the full $\infty$-subcategory 
\[
\Shv(\bTheta_n)
~\subset~
\PShv(\bTheta_n)
\]
consisting of those presheaves that carry (the opposites of) Segal covers to limit diagrams.

\item
For $0<i\leq n$, the \emph{$i$-univalence diagram in $\bTheta_n$} is the functor $\bigl( {\bTheta_n}_{/c_{i-1}\wr \sE(c_1)} \bigr)^{\tr} \to \bTheta_n$ which is adjoint to~(\ref{e33}).  
The $\infty$-category of \emph{univalent Segal sheaves (on $\bTheta_n$)} is the full $\infty$-subcategory 
\[
\Shv^{\sf unv}(\bTheta_n)
~\subset~
\Shv(\bTheta_n)
\]
consisting of those Segal sheaves that carry (the opposites of) univalence diagrams to limit diagrams.

\end{itemize}
\end{definition}

\begin{remark}
We use the notation $\Shv(\bTheta_n)$ for Segal sheaves on $\bTheta_n$ to suggestively regard the Segal condition on a presheaf as a descent condition with respect to a notion of a cover. 
We warn the reader, however, that Segal covers do \emph{not} form a Grothendieck topology on $\bTheta_n$.
Likewise, the $\infty$-category $\Shv(\bTheta_n)$ it is not an $\infty$-topos; therefore, there is no Grothendieck site for which $\Shv(\bTheta_n)$ is its $\infty$-category of sheaves.
Nevertheless, the Segal covers of Definition~\ref{def.segal.cov} \emph{do} define a Grothendieck topology on the subcategory $\bTheta_n^{\sf cls}\subset \bTheta_n$.
Said another way, the pullback $\infty$-category in the diagram
\[
\xymatrix{
\Shv(\bTheta_n^{\sf cls})  \ar[rr]  \ar[d]
&&
\Shv(\bTheta_n)  \ar[d]
\\
\PShv(\bTheta_n^{\sf cls})  \ar[rr]
&&
\PShv(\bTheta_n)
}
\]
is an $\infty$-topos.  
In fact, this $\infty$-topos is free on its infinitesimal basics:
\[
\PShv(\GG_n^{\sf cls})~\simeq~\Shv(\bTheta_n^{\sf cls})    ~,
\]
where $\GG_n^{\sf cls} \subset \bTheta_n^{\sf cls}$ is the full subcategory consisting of the cells.  

\end{remark}

We recall the following culminating definition of~\cite{rezk-n}.

\begin{definition}[\cite{rezk-n}]\label{def.rezk}
The $\infty$-category $\Cat_n$ of $(\infty,n)$-categories is initial among presentable $\infty$-categories under $\bTheta_n$,
\[
\bTheta_n \longrightarrow \Cat_n~,
\]
that carry Segal covers to colimit diagrams and carry univalence diagrams to colimit diagrams.  

\end{definition}

\begin{observation}\label{t23}
From their defining universal properties, there is a canonical identification between $\infty$-categories under $\bTheta_n$: 
\[
\Cat_n~\simeq~ \Shv^{\sf unv}(\bTheta_n)   ~.
\]
\end{observation}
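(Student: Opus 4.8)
The plan is to exhibit $\Shv^{\sf unv}(\bTheta_n)$ as an accessible Bousfield localization of $\PShv(\bTheta_n)$ at an explicit set of morphisms $S$, and then to match the resulting universal property against Definition~\ref{def.rezk}. The essential input is that the Yoneda embedding $\bTheta_n\to\PShv(\bTheta_n)$ exhibits $\PShv(\bTheta_n)$ as the free presentable $\infty$-category on $\bTheta_n$, so that for every presentable $\cD$ restriction along Yoneda is an equivalence $\Fun^L(\PShv(\bTheta_n),\cD)\simeq\Fun(\bTheta_n,\cD)$ from colimit-preserving functors to all functors. To assemble $S$: each Segal cover $\cJ^\tr\to\bTheta_n^{\sf cls}\hookrightarrow\bTheta_n$ composes with Yoneda to a cocone in $\PShv(\bTheta_n)$, hence determines a canonical morphism $\colim_\cJ(\text{Yoneda})\to\text{Yoneda}(c)$ out of the colimit of the non-cone part into the image of the cone point $c$; let $S_{\sf Seg}$ collect these. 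For univalence, the canonical presentation $\colim_{(\bTheta_n)_{/c_{i-1}\wr\sE(c_1)}}(\text{Yoneda})\simeq c_{i-1}\wr\sE(c_1)$ identifies the morphism attached to the $i$-univalence diagram with the map (\ref{e33}) itself; set $S_{\sf unv}:=\{c_{i-1}\wr\sE(c_1)\to c_{i-1}\}_{0<i\leq n}$ and $S:=S_{\sf Seg}\cup S_{\sf unv}$.

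Next I would check that the $S$-local objects are exactly the univalent Segal sheaves. Using the extension of a presheaf along Yoneda recalled in Notation~\ref{d5}, one has $\Map(\colim_\cJ(\text{Yoneda}),\cF)\simeq\lim_{\cJ^{\op}}\cF$, so $\cF$ is local against $S_{\sf Seg}$ precisely when each structure map $\cF(c)\to\lim_{\cJ^{\op}}\cF$ is an equivalence, i.e. when $\cF$ carries the opposites of Segal covers to limit diagrams; and $\cF$ is local against $S_{\sf unv}$ precisely when it carries the opposites of the univalence diagrams to limit diagrams. Jointly these conditions carve out exactly $\Shv^{\sf unv}(\bTheta_n)\subset\PShv(\bTheta_n)$, as in Definition~\ref{def.segal.cov}.

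Granting accessibility, the conclusion is then formal. Localization produces a colimit-preserving $L\colon\PShv(\bTheta_n)\to\Shv^{\sf unv}(\bTheta_n)$ with presentable target, and for presentable $\cD$ it induces $\Fun^L(\Shv^{\sf unv}(\bTheta_n),\cD)\simeq\{F\in\Fun^L(\PShv(\bTheta_n),\cD): F \text{ inverts } S\}$. A colimit-preserving $F$ inverts the $S_{\sf Seg}$-morphism of a Segal cover exactly when $F\circ\text{Yoneda}$ sends that cover to a colimit diagram, and similarly for univalence; composing with the free-cocompletion equivalence therefore yields $\Fun^L(\Shv^{\sf unv}(\bTheta_n),\cD)\simeq\{j\in\Fun(\bTheta_n,\cD): j \text{ sends Segal covers and univalence diagrams to colimit diagrams}\}$. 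This is precisely the universal property of Definition~\ref{def.rezk}; since $L\circ\text{Yoneda}$ itself sends Segal covers and univalence diagrams to colimit diagrams (its values invert the relevant maps by construction of $L$), the pair $(\Shv^{\sf unv}(\bTheta_n),L\circ\text{Yoneda})$ is initial among presentable $\infty$-categories under $\bTheta_n$ with these properties, which is the asserted equivalence under $\bTheta_n$.

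The step I expect to be the main obstacle is accessibility of this localization: as defined, $S_{\sf Seg}$ ranges over colimit diagrams of arbitrary small shape and so is not visibly a set. The remedy is to show that $S_{\sf Seg}$ generates the same strongly saturated class as a small set of elementary covers -- the spine-type decompositions inside each $\bTheta_k$ -- so that the localization is accessible and $\Shv^{\sf unv}(\bTheta_n)$ is genuinely presentable; verifying that these elementary covers already enforce the full Segal condition is the one point that requires genuine care.
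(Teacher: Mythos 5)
Your argument is correct and is exactly the standard unwinding of the two universal properties that the paper leaves implicit (the Observation is stated without proof, as a formal consequence of Definition~\ref{def.segal.cov} and Definition~\ref{def.rezk} via Bousfield localization of $\PShv(\bTheta_n)$). The accessibility issue you flag at the end is resolved by the paper's Remark~\ref{r8}: each overcategory $\bTheta^{\sf cls}_{n/T}$ is a finite poset, so every Segal cover is a finite iteration of pushouts in $\bTheta_n^{\sf cls}$, and locality against the small set of morphisms coming from these pushout squares already enforces the full Segal condition.
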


\begin{definition}\label{def.connective}
Let $0\leq i\leq n$.
A functor $\cC \to \cD$ between $(\infty,n)$-categories is \emph{$i$-connective} if, for each $0\leq k\leq i$, each solid diagram among $(\infty,n)$-categories
\[
\xymatrix{
\partial c_k  \ar[rr]  \ar[d]  
&&
\cC  \ar[d]
\\
c_k  \ar[rr]  \ar@{-->}[urr]
&&
\cD
}
\]
can be filled.

\end{definition}

\begin{example}
Let $\cX \to \cY$ be a map between $\infty$-groupoids, and let $n\geq 0$ be an integer.
Regarded as a functor between $(\infty,n)$-categories, it is $i$-connective if and only if it is $i$-connective as a map between spaces.

\end{example}

\begin{remark}\label{r2}
Let $\cC\to \cD$ be a functor between $(\infty,n)$-categories.
One might say this functor is \emph{$k$-surjective} if it is surjective on spaces of $k$-morphisms with specified source-target.
More precisely, if, for each functor $\partial c_k \to \cC$, the resulting map between spaces $\Map^{\partial c_k/}(c_k , \cC) \to \Map^{\partial c_k/}(c_k, \cD)$ is surjective on path components.  
Through this terminology, the functor $\cC\to \cD$ being $i$-connective is equivalent to it being $k$-surjective for each $0\leq k\leq i$.  

\end{remark}

\begin{definition}\label{def.flagged.n.cat}
A \emph{flagged $(\infty,n)$-category} is a sequence of morphisms among $(\infty,n)$-categories
\[
\cC_0\longrightarrow \cC_1 \longrightarrow \cdots \longrightarrow \cC_n
\]
satisfying the following conditions:
\begin{itemize}
\item
for each $0\leq i \leq n$, the $(\infty,n)$-category $\cC_i$ is actually an $(\infty,i)$-category;

\item
for each $0\leq i \leq j \leq n$, the functor $\cC_i \to \cC_{j}$ is $i$-connective.

\end{itemize}
The $\infty$-category of \emph{flagged $(\infty,n)$-categories} is the full $\infty$-subcategory
\[
\fCat_n~\subset~\Fun\bigl([n],\Cat_n\bigr)
\]
consisting of the flagged $(\infty,n)$-categories.
\end{definition}

\begin{example}
In general, a flagged $(\infty,1)$-category is an $(\infty,1)$-category $\cC$, together with a \emph{surjective} functor $\cG\to \cC$ from an $\infty$-groupoid.  Here \emph{surjective} means \emph{essentially surjective}, or equivalently it means $\pi_0$-surjective on spaces of objects.  
\end{example}

\begin{example}
Let $A$ be an associative algebra in the Cartesian symmetric monoidal $\infty$-category $\Spaces$.
Its deloop $\ast \to \fB A$ is an $\infty$-category equipped with a functor from the terminal $\infty$-groupoid which is surjective on maximal $\infty$-subgroupoids.  
\end{example}

\begin{example}
More generally, let $A$ an $\cE_n$-algebra $A$ in the Cartesian symmetric monoidal $\infty$-category $\Spaces$.
Its $n$-fold deloop $\ast \to \dots \to \ast \to \fB^n A$ is an $(\infty,n)$-category equipped wiht a functor from the terminal $(\infty,n-1)$-category which is $(n-1)$-connective.
\end{example}

\begin{example}
Consider the ordinary category $\Morita$ whose objects are associative rings, whose morphisms from $A$ to $B$ are $(B,A)$-bimodules, and whose composition rule is given as follows: for $P$ a $(B,A)$-bimodule, and for $Q$ a $(C,B)$-bimodule, the composition $Q\circ P$ is the $(C,A)$-bimodule
$
P\underset{B}\otimes Q
$.
Equivalences in $\Morita$ are Morita equivalences between rings.  
In particular, for each commutative ring $R$, the objects ${\sf Mat}_{2\times 2}(R)$ and $R$ are equivalent in $\Morita$.  
Consider the flagged $(\infty,1)$-category
\[
{\sf Rings}^\sim \longrightarrow \Morita~.
\]
The underlying $\infty$-groupoid of this flagged $(\infty,1)$-category is, by design, 
that of isomorphisms between associative rings.  
\end{example}

\begin{example}
More generally, consider the $(2,1)$-category $\Corr$ whose objects are ordinary categories and whose morphisms from $\cC$ to $\cD$ are $(\cD,\cC)$-bimodules, and whose composition rule is given by coend.
Two categories are equivalent in $\Corr$ if their idempotent completions are equivalent as categories.  
In particular, the ordinary category corepresenting an idempotent and that corepresenting a retraction are equivalent in $\Corr$ yet they are not equivalent as categories -- there is a unique fully faithful epimorphism between them, and it is not surjective.  
Consider the flagged $(\infty,1)$-category 
\[
{\sf Cat}^\sim \longrightarrow \Corr~.
\]
The underlying $\infty$-groupoid of this flagged $(\infty,1)$-category is, by design, that of equivalences between ordinary categories.
\end{example}

\begin{example}
We make use of the terminology introduced in Remark~\ref{r2}.
In general, a flagged $(\infty,2)$-category is an $(\infty,2)$-category $\cC_2$, together with a functor $\cC_1\to \cC_2$ from an $(\infty,1)$-category that is 0-surjective and 1-surjective, together with a 0-surjective functor $\cC_0 \to \cC_1$ from an $\infty$-groupoid.  
\end{example}

\begin{example}
Let $\cG$ be an $(\infty,0)$-category, which is simply a space.
Denote by $\cG_{\leq -1}$ its $(-1)$-truncation: this is a space which is initial if $\cG$ is empty, and is final otherwise.  
Then $\cG \to \cG_{\leq -1}$ is a flagged $(\infty,1)$-category.
This construction is present in the definition of an enriched $\infty$-category, as developed in~\cite{david.rune}.  
Namely, for $\cV$ a monoidal $\infty$-category, the canonical functor to its deloop $\ast \to \fB \cV$ is a flagged $(\infty,1)$-category (internal to $\Cat$).  
A $\cV$-enriched $\infty$-category, with underlying $\infty$-groupoid $\cC_0$, is a lax functor between flagged $(\infty,1)$-categories ${\sf hom}_\cC\colon (\cC_0 \to (\cC_0)_{\leq -1}) \xra{\sf lax} (\ast \to \fB \cV)$ satisfying a certain univalence condition.  
\end{example}

\begin{example}\label{r3}
A flagged $(\infty,n)$-category in which each constituent $(\infty,i)$-category is, in fact, an $\infty$-groupoid is precisely a flag of spaces $(X_0\to \dots \to X_n)$ in which each map $X_i\to X_{j}$ is $i$-connective.  
(For an interesting example, consider a knot $K\subset S^3$, and take $X_0=X_1=K$ and $X_2=X_3=S^3$.
For another interesting example, consider a point $\ast \in X$ in an $n$-connective space and take $X_0=\dots=X_{n-1}=\ast$ and $X_n = X$.) 
The underlying $(\infty,n)$-category of this flagged $(\infty,n)$-category is the $\infty$-groupoid $X_n$, which is blind to the maps $X_i\to X_n$.
In the example coming from a point $\ast \in X$ in an $n$-connective space, this flagged $(\infty,n)$-category is the $\cE_n$-algebra $\Omega^n X$, whereas the $(\infty,n)$-category associated to this flagged $(\infty,n)$-category is its $n$-fold deloop $X$, as an unpointed space.  
In the case $n=1$, the space of automorphisms of $X$ is the space of outer automorphisms of the $\cE_1$-algebra $\Omega X$.
\end{example}

\begin{example}
Two closed $(n-1)$-manifolds are equivalent as $(n-1)$-morphisms in the $(\infty,n)$-category $\Bord_n$ if and only if they are h-cobordant (see~\S2.2 of~\cite{cobordism} for a discussion of this).
Consider the fantastic example of a flagged $(\infty,n)$-category
\[
\Bord_0 \longrightarrow \Bord_1\longrightarrow \cdots \longrightarrow \Bord_n
\]
given by the standard functors.  
The underlying $(\infty,i)$-category in this flagged $(\infty,n)$-category, by design, codifies diffeomorphisms between compact $i$-manifolds (with corner structure).  
\end{example}

\begin{observation}\label{maximals}
Consider the standard sequence of fully faithful right adjoint functors
\begin{equation}\label{e5}
\bTheta_0 
~\hookrightarrow~ 
\bTheta_1 
~\hookrightarrow~
\dots 
~\hookrightarrow~ 
\bTheta_{n-1}
~\hookrightarrow~
\bTheta_n~.
\end{equation}
Each of the functors in this sequence, as well as their left adjoints, preserves Segal covers and univalence diagrams.
Therefore, left Kan extension along each functor in the above sequence determines a sequence of fully faithful left adjoint functors
\[
\Cat_0
~\hookrightarrow~
\Cat_1
~\hookrightarrow~
\dots 
~\hookrightarrow~
\Cat_{n-1}
~\hookrightarrow~
\Cat_n~;
\]
the right adjoint to each of these functors is given by restriction along the corresponding functor in~(\ref{e5}).
\end{observation}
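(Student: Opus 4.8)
The plan is to exhibit each inclusion in~(\ref{e5}) as the right adjoint of an explicit truncation functor, to record that restriction along both is compatible with the Segal and univalence conditions, and then to let the presheaf-level adjunction $\lkan\dashv(\text{restriction})$ descend to the full subcategories $\Cat_m\simeq\Shv^{\sf unv}(\bTheta_m)$ of Observation~\ref{t23}. Write $\iota\colon\bTheta_{n-1}\hookrightarrow\bTheta_n$ for the inclusion. First I would identify its left adjoint $\tau\colon\bTheta_n\to\bTheta_{n-1}$: following Definition~\ref{d7}, set $\tau\colon\bTheta_1\simeq\bDelta\to\bTheta_0\simeq\ast$ to be the unique functor at the base (left adjoint to the inclusion selecting the terminal object $[0]\in\bDelta$), and inductively let $\tau$ send $[p](U_1,\dots,U_p)$ to $[p](\tau U_1,\dots,\tau U_p)$. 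The adjunction $\tau\dashv\iota$ then follows from the recursive description of morphisms in $\bTheta_n$ together with the inductive adjunction one dimension down. Both $\iota$ and $\tau$ retain the outer datum $[p]$ and act entrywise, so both preserve the consecutive-inclusion conditions cutting out $\bTheta_\bullet^{\sf cls}$.

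The first assertion --- that $\iota$ and $\tau$ carry Segal covers to Segal covers and $i$-univalence diagrams to $i$-univalence diagrams --- is the main obstacle, and it is not formal: one cannot deduce it from cocontinuity, since the adjunction $\tau\dashv\iota$ does not restrict to the closed subcategories (for instance the collapse $[1]\to[0]$ is not closed). I would instead argue by induction on $n$. Every Segal cover and every $i$-univalence diagram of an object $[p](S_1,\dots,S_p)$ (Definition~\ref{def.segal.cov}) is assembled from the spine and univalence decompositions in the $\bDelta$-direction together with Segal covers and univalence diagrams of the entries $S_i$; since $\iota$ and $\tau$ keep $[p]$ fixed and apply the corresponding functor one dimension down to each $S_i$, the inductive hypothesis on $\bTheta_{n-1}$ (with trivial base case $\bDelta\rightleftarrows\ast$) shows that the image is again a colimit diagram of the same type in $\bTheta_\bullet^{\sf cls}$.

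Granting this, restriction along $\iota$ and along $\tau$ preserves the sheaf conditions: for a univalent Segal sheaf $\cF$, the presheaf $\iota^*\cF=\cF\circ\iota$ sends Segal covers and univalence diagrams of $\bTheta_{n-1}$ to limits precisely because $\iota$ sends them to Segal covers and univalence diagrams of $\bTheta_n$, on which $\cF$ is already a sheaf; and symmetrically for $\tau^*$. Hence, under Observation~\ref{t23}, $\iota^*$ restricts to a functor $\Cat_n\to\Cat_{n-1}$ and $\tau^*$ restricts to a functor $\Cat_{n-1}\to\Cat_n$. Moreover $\tau\dashv\iota$ gives $\tau^*y_b\simeq y_{\iota b}\simeq\iota_!\,y_b$ on representables, so by cocontinuity $\iota_!\simeq\tau^*$; thus left Kan extension along $\iota$ coincides with restriction along $\tau$ and already carries $\Cat_{n-1}$ into $\Cat_n$.

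Finally, the presheaf-level adjunction $\iota_!\dashv\iota^*$ restricts, along the full inclusions $\Cat_m\hookrightarrow\PShv(\bTheta_m)$, to an adjunction $\Cat_{n-1}\rightleftarrows\Cat_n$ --- for $\cG\in\Cat_{n-1}$ and $\cF\in\Cat_n$ one has $\Map(\iota_!\cG,\cF)\simeq\Map(\cG,\iota^*\cF)$, computed in either the presheaf categories or their full subcategories --- whose left adjoint is left Kan extension along $\iota$ and whose right adjoint is restriction along $\iota$. Since $\iota$ is fully faithful, the unit $\id\to\iota^*\iota_!$ is an equivalence, so $\iota_!$ is fully faithful on presheaves and hence on the full subcategory $\Cat_{n-1}$. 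Iterating this for each inclusion in~(\ref{e5}), over $0\le i\le n$, produces the asserted sequence of fully faithful left adjoints $\Cat_0\hookrightarrow\dots\hookrightarrow\Cat_n$ with right adjoints given by restriction.
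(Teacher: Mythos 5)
Your proposal is correct and follows exactly the route the paper's phrasing presupposes: the paper leaves this as an unproved Observation, asserting precisely that $\iota$ and its left adjoint $\tau$ preserve Segal covers and univalence diagrams (the latter checked ``by direct inspection'' in the proof of Lemma~\ref{t6} for $\iota$), so that $\iota^*$ preserves univalent Segal sheaves and $\iota_!\simeq\tau^*$ does as well. Your explicit identification of $\tau$, the inductive verification, and the observation that the $\tau\dashv\iota$ adjunction does not itself restrict to the closed subcategories are all sound elaborations of that same argument.
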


\begin{terminology}\label{def.mxml.sub}
For each $0\leq i \leq j \leq n$, the value of the right adjoint to $\Cat_i \hookrightarrow \Cat_j$ on an $(\infty,j)$-category $\cC$ is its \emph{maximal $(\infty,i)$-subcategory} $\cC_{\leq i}\subset \cC$.  
\end{terminology}

\begin{observation}\label{maximal.underlying}
Evaluation at the target defines a left adjoint
\[
\fCat_n \longrightarrow \Cat_n
\]
in a localization between $\infty$-categories.
The right adjoint carries an $(\infty,n)$-category $\cC$ to the flagged $(\infty,n)$-category 
\[
\cC_{\leq 0} \longrightarrow \cC_{\leq 1} \longrightarrow \dots \longrightarrow \cC_{\leq n-1}\longrightarrow \cC_{\leq n}=\cC  ~.
\]
\end{observation}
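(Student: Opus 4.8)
The plan is to construct the claimed right adjoint explicitly and then verify the adjunction by a collapse of mapping spaces. First I would define the candidate
\[
R\colon \Cat_n \longrightarrow \Fun([n],\Cat_n)~,\qquad \cC \longmapsto \bigl(\cC_{\leq 0}\to \cC_{\leq 1}\to \dots \to \cC_{\leq n}=\cC\bigr)~,
\]
using the maximal $(\infty,i)$-subcategories of Terminology~\ref{def.mxml.sub}, and check that it lands in $\fCat_n$. Each $\cC_{\leq i}$ is an $(\infty,i)$-category by construction, so the content is the connectivity of $\cC_{\leq i}\to \cC_{\leq j}$ for $i\leq j$. Here I would use that, under the identification $\Cat_m\simeq\Shv^{\sf unv}(\bTheta_m)$ of Observation~\ref{t23}, the functor $(-)_{\leq i}$ is left Kan extension of restriction along the fully faithful $\bTheta_i\hookrightarrow\bTheta_n$; hence $\cC_{\leq i}$, $\cC_{\leq j}$, and $\cC$ have canonically identified values on every cell $c_k$ with $k\leq i$, and likewise on each $\partial c_k$. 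Consequently $\cC_{\leq i}\to \cC_{\leq j}$ induces an equivalence on $\Map^{\partial c_k/}(c_k,-)$ for each $k\leq i$, so in particular is $k$-surjective there; by Remark~\ref{r2} this is exactly $i$-connectivity. Evaluating at the top vertex gives $\ev_n\circ R\simeq \id_{\Cat_n}$ on the nose, so the counit will be an equivalence and the adjunction, once established, will be a localization.

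The heart is the adjunction $\ev_n\dashv R$, which I would prove by exhibiting a natural equivalence $\Map_{\fCat_n}(\cF,R\cC)\simeq \Map_{\Cat_n}(\cF_n,\cC)$ for a flag $\cF=(\cF_0\to\dots\to\cF_n)$ and an $(\infty,n)$-category $\cC$. Since $\fCat_n\subset\Fun([n],\Cat_n)$ is full, the left-hand side is a space of natural transformations, which I would write as a limit over the twisted arrow category,
\[
\Map_{\fCat_n}(\cF,R\cC)~\simeq~\limit_{(i\leq j)\in\TwAr([n])}\Map_{\Cat_n}(\cF_i,\cC_{\leq j})~.
\]
The key computation is that, for $i\leq j\leq j'$, the flag maps $\cC_{\leq j}\to\cC_{\leq j'}$ induce equivalences in the $j$-direction: because $\cF_i$ is an $(\infty,i)$-category and $(\cC_{\leq j})_{\leq i}=\cC_{\leq i}$, the adjunction $\Cat_i\hookrightarrow\Cat_n$ of Observation~\ref{maximals} gives
\[
\Map_{\Cat_n}(\cF_i,\cC_{\leq j})~\simeq~\Map_{\Cat_i}(\cF_i,\cC_{\leq i})~\simeq~\Map_{\Cat_n}(\cF_i,\cC)~,
\]
naturally and independently of $j\geq i$. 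Concretely, any functor $\cF_i\to\cC$ factors, uniquely up to contractible choice, through the counit $\cC_{\leq i}\to\cC$, which forces every lower component of a natural transformation $\cF\to R\cC$ once its top component $\cF_n\to\cC$ is fixed.

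It remains to turn this pointwise collapse into a statement about the limit, and this is the step I expect to be the main obstacle. The diagram $(i\leq j)\mapsto\Map_{\Cat_n}(\cF_i,\cC_{\leq j})$ inverts every ``increase $j$'' morphism of $\TwAr([n])$; letting $W$ denote this class, the functor therefore factors through the localization $\TwAr([n])[W^{-1}]$. I would identify this localization with $[n]^{\op}$ — each object $(i\leq j)$ becomes isomorphic to $(i\leq n)$, and the only surviving morphisms are the ``decrease $i$'' maps — and, using that a localization functor is both initial and final, conclude that the limit may be computed over $[n]^{\op}$. Since $n$ is the initial object of $[n]^{\op}$, the limit collapses to the single value $\Map_{\Cat_n}(\cF_n,\cC)$, as claimed. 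The delicate points are the precise identification $\TwAr([n])[W^{-1}]\simeq[n]^{\op}$ and the coherence of the collapse; an alternative that sidesteps the localization bookkeeping is to induct on $n$, peeling off $\cF_0$ and using at each stage that the bottom component of the transformation is determined up to contractible choice by the factorization through $\cC_{\leq 0}\to\cC_{\leq 1}$.

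Either route yields the natural equivalence $\Map_{\fCat_n}(\cF,R\cC)\simeq\Map_{\Cat_n}(\ev_n\cF,\cC)$, establishing $\ev_n\dashv R$; together with the identity $\ev_n\circ R\simeq\id_{\Cat_n}$ from the first paragraph, this exhibits $\ev_n$ as the left adjoint in a localization, with $R$ the fully faithful right adjoint sending $\cC$ to its tower of maximal subcategories.
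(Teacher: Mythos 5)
The paper states Observation~\ref{maximal.underlying} without proof, so there is nothing to compare against line by line; your job was to supply the omitted justification, and what you have written does that correctly. Both halves of your argument check out: the verification that $R\cC$ is a flagged $(\infty,n)$-category is right (since $\Cat_i\hookrightarrow\Cat_n$ is fully faithful with right adjoint given by restriction along $\bTheta_i\hookrightarrow\bTheta_n$, the functor $\cC_{\leq i}\to\cC_{\leq j}$ induces an equivalence on $\Map^{\partial c_k/}(c_k,-)$ for $k\leq i$, which is more than the required $k$-surjectivity of Remark~\ref{r2}); and the mapping-space collapse correctly reduces to the single fact that $\Map_{\Cat_n}(\cF_i,\cC_{\leq j})$ is independent of $j\geq i$ because $(\cC_{\leq j})_{\leq i}\simeq\cC_{\leq i}$. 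On the step you flag as delicate: the identification $\TwAr([n])[W^{-1}]\simeq[n]^{\op}$ does hold, and can be seen cheaply via the retraction $q\colon(i\leq j)\mapsto i$ together with its section $i\mapsto(i\leq n)$ and the natural transformation $(i\leq j)\to(i\leq n)$, whose components lie in $W$; combined with the fact that localization functors are initial, this closes the gap. That said, your inductive alternative is the cleaner route and is exactly the mechanism the paper itself uses elsewhere (the decomposition $[n-1]\amalg_{\{n-1\}}\{n-1<n\}\to[n]$ in the proof of Observation~\ref{mapping.spaces}): peeling off one vertex at a time, the square of mapping spaces has one leg an equivalence, so the pullback collapses, and induction finishes. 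Either way the counit $\ev_n R\cC\simeq\cC$ is an equivalence, so $R$ is fully faithful and $\ev_n$ is indeed a localization.
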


The fully faithful functors
\begin{equation}\label{e13}
\bTheta_n
~\hookrightarrow~ 
\Cat_n
~\hookrightarrow~ 
\fCat_n
\end{equation}
determine the restricted Yoneda functors
\begin{equation}\label{0}
\fCat_n \longrightarrow \PShv(\Cat_n)
\qquad
\text{ and }
\qquad
\sfN\colon \fCat_n \longrightarrow \PShv(\bTheta_n)~.
\end{equation}
In light of the factorization~(\ref{e13}), the functor $\sfN$ extends the standard nerve functor:
\[
\sN\colon \Cat_n \hookrightarrow \fCat_n \xra{~\sfN~} \PShv(\bTheta_n)~.
\]

\begin{remark}
Let $\un{\cC} = (\cC_0 \to \cC_1 \to \dots \to \cC_n)$ be a flagged $(\infty,n)$-category.
The value of the presheaf $\sfN(\un{\cC})$ on $T\in \bTheta_n$ is the space of fillers in the commutative diagram among $(\infty,n)$-categories:
\[
\xymatrix{
T_{\leq 0}  \ar[r]  \ar@{-->}[d]  
&
T_{\leq 1}  \ar[r]  \ar@{-->}[d]  
&
\cdots  \ar[r]  
&
T_{\leq n-1}  \ar[r] \ar@{-->}[d]  
&
T_{\leq n}  \ar@{-->}[d]   
\\
\cC_0  \ar[r]
&
\cC_1 \ar[r]
&
\cdots   \ar[r]
&
\cC_{n-1}  \ar[r]
&
\cC_n  .
}
\]
In the case that each canonical functor to the maximal $(\infty,i)$-subcategory $\cC_i \to (\cC_n)_{\leq i}$ is an equivalence, such a diagram is just the data of its rightmost vertical arrow.  
\end{remark}

Here is our main result, which we prove in~\S\ref{sec.main.proof}.
\begin{theorem}\label{main.result.1}
The restricted Yoneda functor
\[
\sfN\colon \fCat_n \xra{~(\ref{0})~} \PShv(\bTheta_n)
\]
is fully faithful, with image consisting of those presheaves that carry (the opposites of) Segal covers to limit diagrams:
\[
\sfN\colon \fCat_n\xra{~\simeq~} \Shv(\bTheta_n)~.
\]
\end{theorem}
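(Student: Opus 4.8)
The plan is to exhibit $\sfN$ as the right adjoint of an adjunction $L\colon\PShv(\bTheta_n)\rightleftarrows\fCat_n\colon\sfN$, where $L$ is the colimit-preserving extension of the functor $j\colon\bTheta_n\hookrightarrow\Cat_n\xra{R}\fCat_n$ of~(\ref{0}), with $R$ the fully faithful right adjoint of Observation~\ref{maximal.underlying}. Thus $j(T)=T_{\le\bullet}=(T_{\le 0}\to\dots\to T_{\le n})$ and, by the Remark preceding the theorem, $\sfN(\un\cC)(T)\simeq\Map_{\fCat_n}(T_{\le\bullet},\un\cC)$; since both categories are presentable, $L$ exists. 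I would reduce the theorem to three assertions: (a) $\sfN$ lands in $\Shv(\bTheta_n)$; (b) for every Segal sheaf $\cF$ the unit $\cF\to\sfN L\cF$ is an equivalence; and (c) the counit $L\sfN\to\id$ is an equivalence. Granting these, (a) and (b) identify the essential image with $\Shv(\bTheta_n)$, while (c) yields full faithfulness.

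Assertion (a) amounts to showing that $j$ carries Segal covers to colimit diagrams in $\fCat_n$, for then $\sfN(\un\cC)=\Map_{\fCat_n}(j(-),\un\cC)$ carries them to limit diagrams, which is exactly the Segal condition. Here I would use that $\bTheta_n\hookrightarrow\Cat_n$ carries Segal covers to colimit diagrams by Definition~\ref{def.rezk}, together with Observation~\ref{maximals}, which guarantees that the maximal-subcategory functors $(-)_{\le i}$ preserve these Segal colimits, so that $R$ does too. As a consistency check, fully faithfulness of $R$ gives $\sfN\circ R\simeq\sN$, so on the reflective subcategory $R(\Cat_n)\subset\fCat_n$ the functor $\sfN$ restricts to the established equivalence $\sN\colon\Cat_n\xra{\simeq}\Shv^{\sf unv}(\bTheta_n)$ of Observation~\ref{t23}; thus $\sfN$ is already correct on the univalent part, and the real work lies in the complementary, non-univalent directions.

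The core is assertion (b), the reconstruction of a Segal sheaf from flag data. Writing $L\cF\simeq\colim_{(T\to\cF)}T_{\le\bullet}$ as a colimit in $\fCat_n$ and evaluating at the top, $\ev_n L\cF\simeq\colim_{(T\to\cF)}T$ is the reflection of $\cF$ into $\Cat_n$, which is the univalent completion $\cF^{\sf unv}$ when $\cF$ is already a Segal sheaf; the lower entries $\cC_i$ of the flag $L\cF$ are cut out by analogous partial completions, so that $L\cF$ should be the flag $(\cC_0\to\dots\to\cC_n=\cF^{\sf unv})$ of intermediate univalent completions of $\cF$. Here I would invoke the explicit expression for univalent completion advertised in the introduction, both to verify that the maps $\cC_i\to\cC_j$ are genuinely $i$-connective (so that $L\cF$ really is a flagged $(\infty,n)$-category) and to verify that reassembling these completions recovers $\cF$, i.e. that the unit is an equivalence. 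This reassembly is an effectivity statement generalizing the effectivity of $n$-groupoid objects (Corollary~\ref{main.corollary}), to which it specializes when every $\cC_i$ is an $\infty$-groupoid, and I expect to prove it by induction on $n$ via the wreath presentation $\bTheta_n\simeq\bDelta\wr\bTheta_{n-1}$: the $\bDelta$-direction encodes the category-object structure reconstructing the top of the flag, while restriction along $\bTheta_{n-1}\hookrightarrow\bTheta_n$ feeds the remaining flag data into the inductive hypothesis $\fCat_{n-1}\simeq\Shv(\bTheta_{n-1})$.

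Finally, for full faithfulness (c) I would establish that $\sfN$ is conservative (by the same induction on $n$, reducing through the wreath structure to conservativity of the nerve on $\Cat_{n-1}$) and then combine this with (b): by the triangle identity, $\sfN(\epsilon_{\un\cC})$ is inverse to the unit $\eta_{\sfN\un\cC}$, which is an equivalence by (b) since $\sfN\un\cC$ is a Segal sheaf, whence conservativity upgrades this to $\epsilon_{\un\cC}$ being an equivalence. The main obstacle is assertion (b) — equivalently, establishing and wielding the univalent-completion formula to prove the effectivity/reassembly statement. The delicate point is that, in contrast with the univalent case, the flag of $\cF$ is \emph{not} read off by restriction along the inclusions $\bTheta_i\hookrightarrow\bTheta_n$ (such a restriction still records the higher entries $\cC_{i+1},\dots,\cC_n$); one must instead isolate the partial-completion colimits above and show that the Segal condition forces precisely the connectivity and descent needed for the reconstruction to close up.
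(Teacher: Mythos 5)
Your skeleton coincides with the paper's: the paper also proves the theorem by exhibiting $\sfN$ as the right adjoint of an adjunction $\Shv(\bTheta_n)\rightleftarrows\fCat_n$ whose left adjoint is the flag of univalent completions $\cF\mapsto\bigl((\cF_{|\bTheta_0^{\op}})^{\widehat{~}}_{\sf unv}\to\dots\to(\cF_{|\bTheta_n^{\op}})^{\widehat{~}}_{\sf unv}\bigr)$, verifies that this flag satisfies the connectivity conditions (Lemma~\ref{t9}, your parenthetical about $i$-connectivity), shows $\sfN$ lands in Segal sheaves by checking that $\bTheta_n^{\sf cls}\to\fCat_n$ preserves colimits (Lemma~\ref{t4}, your assertion (a)), and then shows the unit and counit are equivalences (Lemmas~\ref{t11} and~\ref{t12}), all leaning on the explicit $\sE(\bullet)$-formula for univalent completion (Proposition~\ref{completion}). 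Two remarks on where you diverge. First, the paper does not construct the left adjoint as an abstract left Kan extension into $\fCat_n$; it builds the adjunction through the $\infty$-category of sections $\Gamma(\Shv^{\sf unv}(\bTheta_\bullet))\to[n]$, precisely so that the lower entries of $L\cF$ are computed levelwise. Your identification $\ev_iL\cF\simeq\colim_{(T\to\cF)}T_{\leq i}$ presupposes that each evaluation $\ev_i\colon\fCat_n\to\Cat_i$ preserves colimits; Observation~\ref{maximal.underlying} gives this only for $i=n$, and since $\fCat_n$ is a full subcategory of $\Fun([n],\Cat_n)$ cut out by conditions, colimits there are not a priori levelwise. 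This is a fixable but genuinely missing step (as is the presentability of $\fCat_n$, which you assume in order to produce $L$ at all).

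Second, and more substantively: your treatment of the counit via conservativity of $\sfN$ plus the triangle identity is a valid reduction in the abstract, but the conservativity claim is not a shortcut --- it is essentially equivalent to the statement it is meant to prove. To see that $\un{\cC}\to\un{\cD}$ is an equivalence given $\sfN(\un{\cC})\simeq\sfN(\un{\cD})$, one must recover each $\cC_i$ from $\sfN(\un{\cC})$, and the only available mechanism is that $\cC_i$ is the univalent completion of $\sfN(\cC_{\bullet\leq i})=\sfN(\un\cC)_{|\bTheta_i^{\op}}$ --- which is exactly Lemma~\ref{t12}, the counit statement. Your proposed induction "reducing through the wreath structure to conservativity of the nerve on $\Cat_{n-1}$" does not obviously close up, because a flagged $(\infty,n)$-category is not a category object in flagged $(\infty,n-1)$-categories: the flag carries $n+1$ independent connective covers that the wreath decomposition of $\bTheta_n$ does not see. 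The paper's direct proof of Lemma~\ref{t12} shows what such an argument actually requires: the surjectivity of~(\ref{e34}) coming from the connectivity hypotheses (Observation~\ref{t24}), effectivity of groupoid objects, Quillen's Theorem B-type descent for the realizations, and hypercompleteness of $\Spaces$ to upgrade an equivalence on loop spaces plus $\pi_0$-surjectivity to an equivalence. You would need to supply the same ingredients to prove conservativity, so this step should be regarded as a gap in the proposal rather than an alternative route.
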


\begin{remark}
Theorem~\ref{main.result.1} offers a model-independent description Segal sheaves on $\bTheta_n$, as we explain.
By Definition~\ref{def.rezk}, the $\infty$-category $\Cat_n$ of $(\infty,n)$-categories is defined via a universal property that references $\bTheta_n$.  
The work of Barwick--Schommer-Pries~(\cite{clark-chris}) articulates a sense in which this dependence on $\bTheta_n$ can be relieved, or rather replaced by an assortment of other basic categories $\bcT_n$.
In this way, we regard the $\infty$-category $\Cat_n$ of $(\infty,n)$-categories as \emph{model-independent} -- it can be described as a full $\infty$-subcategory of presheaves on an assortment of basic categories $\bcT_n$, not just $\bcT_n=\bTheta_n$.  
Supported by this, the Definition~\ref{def.flagged.n.cat} of the $\infty$-category of flagged $(\infty,n)$-categories, then, is a model-independent notion.
Theorem~\ref{main.result.1} therefore gives a model-independent description of Segal sheaves on $\bTheta_n$.
\end{remark}

\subsection{A corollary}
We draw a corollary of Theorem~\ref{main.result.1}, as it specializes to the case of groupoids.
To state this corollary, we give two auxiliary definitions.

Recall from Notation~\ref{d3} the strict $n$-category $c_{i-1} \wr \sE(c_1)$.
Consider the functor between strict $n$-categories,
\begin{equation}\label{e25}
c_i 
\longrightarrow 
c_{i-1} \wr \sE(c_1)   ~,
\end{equation}
corepresenting invertible $i$-morphisms as examples of $i$-morphsims within strict $n$-categories.
\begin{definition}\label{d6}
The $\infty$-category of \emph{$n$-groupoid objects (in $\Spaces$)} is the full $\infty$-subcategory
\[
\nGpd[\cS]
~\subset~
\PShv(\bTheta_n)
\]
consisting of those presheaves $\cG\colon \bTheta_n^{\op} \to \Spaces$ that satisfy the following conditions.
\begin{enumerate}
\item
$\cG$ carries (the opposites of) Segal diagrams to limit diagrams.

\item
For each $0<i\leq n$, each solid diagram among presheaves on $\bTheta_n$ admits a filler:
\[
\xymatrix{
c_i \ar[rr]  \ar[d]_-{(\ref{e25})}
&&
\cG  
\\
c_{i-1} \wr \sE(c_1)  \ar@{-->}[urr]_-{\exists}
&&
.
}
\]
\end{enumerate}
\end{definition}

\begin{remark}
Let $0<i\leq n$.
The morphism $c_i \xra{(\ref{e25})} c_{i-1}\wr \sE(c_1)$ in $\Shv(\bTheta_1)$ is an epimorphism.
So, for each Segal sheaf $\cF$ on $\bTheta_n$, the map between spaces induced by $\cF\bigl( c_{i-1}\wr \sE(c_1)\bigr) \xra{ \cF(\ref{e25})} \cF(c_i)$ is a monomorphism.
Therefore, condition~(2) in Definition~\ref{d6} is equivalent to the condition that, for each $0<i\leq n$, this monomorphism $\cG\bigl( c_{i-1} \wr \sE(c_1) \bigr) \xra{\simeq} \cG(c_i)$ is surjective on components, and is therefore an equivalence.  
In the case that $\cG$ is the nerve of a strict $n$-category, this condition~(2) is exactly the condition that $\cG$ is, in fact, a strict $n$-groupoid.
This justifies the terminology of Definition~\ref{d6}.  
\end{remark}

The next definition isolates the examples discussed in Example~\ref{r3}.  
\begin{definition}\label{d4}
The $\infty$-category of \emph{$n$-flagged $\infty$-groupoids} is the full $\infty$-subcategory 
\[
\fGpd_n~\subset~\Fun([n],\Spaces)
\]
of sequences $(X_0 \to \dots \to X_n)$ for which the map $X_i \to X_j$ is $i$-connective for each $0\leq i\leq j \leq n$.  

\end{definition}

\begin{observation}\label{t19}
There are evident fully faithful functors
\[
\nGpd[\cS]
~\hookrightarrow~
\Shv(\bTheta_n)
\qquad\text{ and } \qquad
\fGpd_n
~\hookrightarrow~
\fCat_n~.
\]
\end{observation}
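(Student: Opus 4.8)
The plan is to exhibit each inclusion as the restriction of a previously-established inclusion, and thereby inherit full faithfulness for free. Consider the second functor $\fGpd_n \hookrightarrow \fCat_n$ first, since it is the more concrete of the two. Both sides are defined as full $\infty$-subcategories: $\fGpd_n \subset \Fun([n],\Spaces)$ and $\fCat_n \subset \Fun([n],\Cat_n)$. The fully faithful inclusion $\Spaces \simeq \Cat_0 \hookrightarrow \Cat_n$ of Observation~\ref{maximals} induces a fully faithful functor $\Fun([n],\Spaces) \hookrightarrow \Fun([n],\Cat_n)$ by postcomposition. So it suffices to check that this functor carries objects of $\fGpd_n$ into $\fCat_n$, i.e. that the two defining conditions match up. A sequence $(X_0 \to \dots \to X_n)$ of spaces, viewed in $\Cat_n$, automatically has each $X_i$ an $(\infty,i)$-category (indeed an $(\infty,0)$-category), so the first condition of Definition~\ref{def.flagged.n.cat} is vacuous. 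For the connectivity condition, I would invoke the Example immediately following Definition~\ref{def.connective}: a map between $\infty$-groupoids is $i$-connective as a functor of $(\infty,n)$-categories precisely when it is $i$-connective as a map of spaces. Hence the condition defining $\fGpd_n$ (each $X_i \to X_j$ is $i$-connective as spaces) is literally the condition defining membership in $\fCat_n$ (each $\cC_i \to \cC_j$ is $i$-connective as functors) under this identification. The inclusion is therefore the restriction of a fully faithful functor to a full subcategory, and full faithfulness is immediate.

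For the first functor $\nGpd[\cS] \hookrightarrow \Shv(\bTheta_n)$, both are full $\infty$-subcategories of $\PShv(\bTheta_n)$, so the inclusion is tautologically fully faithful once I verify the containment $\nGpd[\cS] \subset \Shv(\bTheta_n)$ at the level of defining conditions. But this is built in: condition~(1) of Definition~\ref{d6} is exactly the Segal condition defining $\Shv(\bTheta_n)$ in Definition~\ref{def.segal.cov}. So an $n$-groupoid object is by definition a Segal sheaf satisfying the extra filling condition~(2), and the inclusion is the evident one between nested full subcategories of the same presheaf $\infty$-category.

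The only genuinely nontrivial point is the compatibility of the two notions of connectivity invoked for the second functor, but this is precisely the content of the Example after Definition~\ref{def.connective}, which I may assume. The remaining verifications are bookkeeping: confirming that postcomposition with a fully faithful functor between target $\infty$-categories yields a fully faithful functor between functor $\infty$-categories (a standard fact, since $\Map_{\Fun([n],\cD)}(F,G) \simeq \lim_{[n]} \Map_{\cD}(F(-),G(-))$ and full faithfulness of $\Cat_0 \hookrightarrow \Cat_n$ is preserved under the limit), and confirming that each defining condition transports correctly. I do not expect any of these steps to present a real obstacle; the substance of the observation is organizational, arranging both groupoidal refinements as full subcategories sitting inside their categorical counterparts so that Theorem~\ref{main.result.1} and Corollary~\ref{main.corollary} can later be related by restriction.
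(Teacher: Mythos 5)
Your proposal is correct and matches the paper's (implicit) argument: the paper states this as an ``Observation'' with no written proof precisely because both inclusions are inclusions of nested full $\infty$-subcategories (condition~(1) of Definition~\ref{d6} is the Segal condition, and Example~\ref{r3} together with the example following Definition~\ref{def.connective} already records the matching of the connectivity conditions under $\Spaces\simeq\Cat_0\hookrightarrow\Cat_n$). Your elaboration supplies exactly the bookkeeping the authors leave to the reader.
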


We isolate the following consequence of Theorem~\ref{main.result.1}, which is of independent interest.  
Its proof occupies~\S\ref{sec.groupoid}.
\begin{cor}\label{main.corollary}
The equivalence of Theorem~\ref{main.result.1} restricts as an equivalence between $\infty$-categories:
\[
\xymatrix{
\fGpd_n  \ar@{-->}[rr]^-{\sfN}_-{\simeq}    \ar[d]_-{\rm Obs~\ref{t19}}
&&
\nGpd[\cS]  \ar[d]^-{\rm Obs~\ref{t19}}
\\
\fCat_n  \ar[rr]^-{\sfN}_-{\underset{\rm Thm~\ref{main.result.1}}{\simeq}}
&&
\Shv(\bTheta_n)  .
}
\]

\end{cor}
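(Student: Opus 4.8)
The plan is to deduce the corollary from Theorem~\ref{main.result.1} by matching up the two subcategory-defining conditions. Since $\sfN\colon\fCat_n\xra{\simeq}\Shv(\bTheta_n)$ is an equivalence and both vertical functors of Observation~\ref{t19} are fully faithful inclusions of \emph{full} $\infty$-subcategories, an equivalence restricts to the subcategories as soon as it identifies them; so it suffices to prove that, for a flagged $(\infty,n)$-category $\un\cC = (\cC_0\to\dots\to\cC_n)$, one has $\un\cC\in\fGpd_n$ if and only if $\sfN(\un\cC)\in\nGpd[\cS]$. I would first reformulate the two conditions concretely. By Definition~\ref{d4}, $\un\cC\in\fGpd_n$ exactly when each $\cC_i$ is an $\infty$-groupoid, equivalently (Remark~\ref{r2}) when every $j$-morphism of $\cC_i$ is invertible for all $1\le j\le i\le n$; note $\cC_0$ is automatically a space. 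On the other side, by the Remark following Definition~\ref{d6}, $\sfN(\un\cC)\in\nGpd[\cS]$ if and only if for each $0<i\le n$ the monomorphism $\sfN(\un\cC)\bigl(c_{i-1}\wr\sE(c_1)\bigr)\to\sfN(\un\cC)(c_i)$ induced by~(\ref{e25}) is an equivalence.

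The key is then to evaluate both sides via full faithfulness of $\sfN$ and the filler description preceding Theorem~\ref{main.result.1}. Since $c_i$ is an $(\infty,i)$-category, Observation~\ref{maximal.underlying} gives $\sfN^{-1}(c_i)\simeq (c_i)_{\leq\bullet}$, so $\sfN(\un\cC)(c_i)\simeq\Map_{\fCat_n}\bigl((c_i)_{\leq\bullet},\un\cC\bigr)$ is the end over $[n]$ of the spaces $\Map_{\Cat_k}\bigl((c_i)_{\leq k},\cC_k\bigr)$ -- concretely, the space of $i$-morphisms of the flag whose entire source/target boundary is coherently lifted down the flag. The evaluation on $c_{i-1}\wr\sE(c_1)$ singles out precisely those data whose underlying $i$-morphism is invertible, and~(\ref{e25}) realizes the inclusion of the latter space into the former. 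The case $n=1$ is the model: there $\sfN(\un\cC)(c_1)\simeq\Ar(\cC_1)\times_{((\cC_1)_{\leq 0})^{2}}(\cC_0)^{2}$ is the space of arrows of $\cC_1$ with both endpoints lifted to $\cC_0$, while $\sfN(\un\cC)(\sE(c_1))\simeq\cC_0\times_{(\cC_1)_{\leq 0}}\cC_0$ maps in as the subspace of such arrows that are equivalences. Thus condition~(2) of Definition~\ref{d6} at level $i$ says exactly that every $i$-morphism of the flag carrying coherently lifted boundary is invertible.

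With this identification the two implications run as follows. For ``$\fGpd_n\Rightarrow\nGpd[\cS]$'', suppose every $\cC_k$ is a space. Mapping an $(\infty,n)$-category into a space factors through the left adjoint (groupoid completion) $\Cat_n\to\Spaces$, and the realization of~(\ref{e25}) is an equivalence of spaces -- both $\bigl|c_i\bigr|$ and $\bigl|c_{i-1}\wr\sE(c_1)\bigr|$ are contractible, the latter because $c_{i-1}\wr\sE(c_1)\simeq c_{i-1}$ in $\Cat_n$ by~(\ref{e33}). Hence $\Map(c_{i-1}\wr\sE(c_1),\cC_k)\to\Map(c_i,\cC_k)$ is an equivalence for every $k$, and these assemble through the end over $[n]$ to show~(\ref{e25}) induces an equivalence on $\sfN(\un\cC)$. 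For the converse, assume condition~(2) at every level and fix a $j$-morphism $\beta$ of some $\cC_i$ with $1\le j\le i$. Using that each flag map $\cC_k\to\cC_{k'}$ is $k$-connective, hence $\ell$-surjective for $\ell\le k$ (Remark~\ref{r2}), I would lift the boundary of $\beta$ coherently down the flag -- by induction on the dimension of the faces -- and then lift $\beta$ itself to a $j$-morphism of $\cC_j$, realizing $\beta$ as the $\cC_i$-component of a point of $\sfN(\un\cC)(c_j)$. Condition~(2) at level $j$ places this point in the invertible locus, so the extension $c_{j-1}\wr\sE(c_1)\to\cC_i$ exhibits $\beta$ as invertible. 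Ranging over all $1\le j\le i\le n$ shows every $\cC_i$ is a space, and the two implications give $\sfN(\fGpd_n)=\nGpd[\cS]$ and hence the asserted restricted equivalence.

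The genuine obstacle is the evaluation on the non-representable Segal sheaf $c_{i-1}\wr\sE(c_1)$: it is \emph{not} univalent, so one may not simply apply the maximal-subcategory filler formula to it, and its reconstruction $\sfN^{-1}\bigl(c_{i-1}\wr\sE(c_1)\bigr)$ -- equivalently the computation of $\Map_{\Shv}\bigl(c_{i-1}\wr\sE(c_1),\sfN(\un\cC)\bigr)$ through a cell/Segal presentation of the walking invertible $i$-morphism -- must be handled with care, and the same presentation must be shown compatible with the groupoid-completion equivalence used in the forward implication. I expect this, together with the coherent boundary-lifting bookkeeping in the converse, to be where the real work lies; it is likely cleanest to organize both implications around an induction on $n$, peeling off the top invertibility condition ($i=n$) and invoking the $\bTheta_{n-1}$-structure of the hom-objects to reduce conditions at levels $i<n$ to the corollary for $n-1$.
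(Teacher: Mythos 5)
Your reduction---both vertical functors of Observation~\ref{t19} are inclusions of \emph{full} $\infty$-subcategories, so it suffices to show $\un{\cC}\in\fGpd_n$ if and only if $\sfN(\un{\cC})\in\nGpd[\cS]$---is sound, and your route is genuinely different from the paper's. The paper never verifies condition~(2) of Definition~\ref{d6} pointwise and never evaluates $\sfN(\un{\cC})$ on the non-representable presheaf $c_{i-1}\wr\sE(c_1)$. Instead it shows that the adjunction~(\ref{e015}) \emph{restricts} to an adjunction $\nGpd[\cS]\rightleftarrows\fGpd_n$ (Lemmas~\ref{t21} and~\ref{t20}) and then observes that the unit and counit of the restricted adjunction are equivalences because they already are for~(\ref{e015}), by Lemmas~\ref{t11} and~\ref{t12}. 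The key simplification you are missing is Lemma~\ref{t21}: the univalent-completion of an $i$-groupoid object is its colimit, regarded as a constant presheaf. This handles your forward direction in one stroke---for a flag of spaces each completion $(\cF_{|\bTheta_i^{\op}})^{\widehat{~}}_{\sf unv}$ is constant, and the groupoidality of $\sfN(\un{X})$ is checked in Lemma~\ref{t20} by showing $\sfN(\un{X})(\sE(T))\to\sfN(\un{X})(T)$ is an equivalence for \emph{representable} $T$, via Observation~\ref{t22} and a diagram of left adjoints. It also renders your converse direction (the coherent boundary-lifting down the flag, followed by extraction of invertibility in each $\cC_i$) unnecessary, since essential surjectivity onto $\nGpd[\cS]$ falls out of the counit equivalence of Lemma~\ref{t12} once the left adjoint is known to land in $\fGpd_n$. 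The two steps you explicitly defer---computing $\Map\bigl(c_{i-1}\wr\sE(c_1),\sfN(\un{\cC})\bigr)$ through a cell presentation of the walking invertible $i$-morphism, and the skeletal lifting bookkeeping---are exactly the work the adjunction-theoretic argument avoids; I believe both can be carried out (your $n=1$ sanity checks are correct), but they would make your proof substantially longer. What your approach buys in exchange is an explicit, object-level criterion for which flagged $(\infty,n)$-categories have groupoidal nerves, independent of the completion formula of Proposition~\ref{completion}.
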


\begin{remark}\label{r4}
Let $\un{X} = (X_0 \to X_1 \to \dots \to X_n)$ be an $n$-flagged $\infty$-groupoid.  
Through Corollary~\ref{main.corollary}, this is equivalent data to the $n$-groupoid object $\sfN(\un{X})$.
This $n$-groupoid object $\sfN(\un{X})$ is the Segal sheaf on $\bTheta_n$ with the following values on cells and their boundaries.
\begin{itemize}

\item
$\sfN(\un{X})(c_0) \simeq X_0$.  Also, $\sfN(\un{X})(\partial c_1) \simeq  X_0\times X_0$.  

\item
$\sfN(\un{X})(c_1) \simeq X_0 \underset{X_1}\times X_0$.
Also, $\sfN(\un{X})(\partial c_2) \simeq \bigl(X_0 \underset{X_1}\times X_0\bigr) \underset{X_0\times X_0} \times \bigl( X_0 \underset{X_1}\times X_0\bigr)$.

\item
\begin{eqnarray}
\nonumber
\sfN(\un{X})(c_2) 
&
\simeq 
&
X_2 \underset{ \bigl(X_2 \underset{X_2}\times X_2\bigr) \underset{X_2\times X_2} \times \bigl( X_2 \underset{X_2}\times X_2\bigr)}\times  \bigl(X_0 \underset{X_1}\times X_0\bigr) \underset{X_0\times X_0} \times \bigl( X_0 \underset{X_1}\times X_0\bigr)
\\
\nonumber
&
\simeq
&
X_2 \underset{X^{S^1}}\times  \sfN(\un{X})(\partial c_2)   ~ .
\end{eqnarray}

\item
In general, there are pullback squares among spaces:
\[
\xymatrix{
\sfN(\un{X})(c_i)  \ar[r]  \ar[d]  
&
X_i  \ar[d]^-{\sf diag}
&&
\sfN(\un{X})(\partial c_j)  \ar[r]  \ar[d]  
&
\sfN(\un{X})(c_{j-1})     \ar[d]
\\
\sfN(\un{X})(\partial c_i)  \ar[r]  
&
X_i^{S^{i-1}}    
&
\text{ and }
&
\sfN(\un{X})(c_{j-1})   \ar[r]    
&
\sfN(\un{X})(\partial c_{j-1})       .
}
\]

\end{itemize}
Informally, an object in $\sfN(\un{X})$ is a point in $X_0$.
A 1-morphism in $\sfN(\un{X})$ is a path in $X_1$ equipped with lifts of its endpoints to $X_0$.
A 2-morphism in $\sfN(\un{X})$ is a 2-disk in $X_2$, equipped with compatible lifts of its hemispheres to $X_1$ and lifts of its poles to $X_0$. 
Continuing, an $i$-morphism is an $i$-disk in $X_i$ equipped with compatible lifts of its hemispherical $j$-strata to $X_j$ for $j<i$.  
Informally, composition is given by concatenating disks.    
\end{remark}

\begin{remark}
Let $\un{X}$ be an $n$-flagged $\infty$-groupoid.  
The connectivity assumptions on $\un{X}$ ensure that the space $X_n$ can be recovered as the colimit of the $n$-groupoid object $\sfN(\un{X})$.
Without these connectivity assumptions, this colimit $|\sfN(\un{X})|$ would report a suitable connective cover of $X_n$. 
\end{remark}

\begin{remark}\label{r5}
Let $\un{X} = (X_0 \to X_1)$ be a 1-flagged $\infty$-groupoid. 
The $1$-groupoid object $\sfN(\un{X})$, which is in particular a simplicial space, is the \emph{Cech nerve} of the map $X_0 \to X_1$, in the sense of~\S6.1.2 of~\cite{HTT}.  
In this sense, Corollary~\ref{main.corollary} generalizes the fact that every $1$-groupoid object $\cG_\bullet$ in $\Spaces$ is the Cech nerve of the canonical map $\cG_0 \to |\cG_\bullet|$ to its colimit.
\end{remark}

\begin{remark}
Let $\un{X} = (X_0 \to \dots \to X_n)$ be an $n$-flagged $\infty$-groupoid.
After Remark~\ref{r5}, the $n$-groupoid object $\sfN(\un{X})$, which is in particular a presheaf on $\bTheta_n$, can be interpreted as the \emph{$n$-Cech nerve} of the given flag $\un{X}$.
In this way, Corollary~\ref{main.corollary} states that every $n$-groupoid object $\bTheta_n^{\op}\xra{\cG_\bullet}\Spaces$ in $\Spaces$ is the $n$-Cech nerve of the canonical flag of maps $\cG_0 \to |\cG_{\bullet\leq 1}| \to | \cG_{\bullet \leq 2}| \to \dots \to | \cG_{\bullet \leq n} |$.  
\end{remark}

\begin{remark}\label{r6}
Remark~\ref{r5} can be interpreted as an instance of unstable Koszul duality over the $\cE_1$-operad, which we expand on now.
Let $\un{X} = (X_0 \to X_1)$ be a 1-flagged $\infty$-groupoid.  
Fix a field $\Bbbk$ of characteristic 0; consider the presentable $\infty$-category ${\sf Stack}(\Bbbk)$ of (commutative) $\Bbbk$-stacks.
The functor $\ast \xra{\lag {\sf Spec}(\Bbbk) \rag} {\sf Stack}(\Bbbk)$, which selects the terminal (commutative) $\Bbbk$-stack, uniquely extends as a colimit preserving functor $\Spaces \to {\sf Stack}(\Bbbk)$.  
In this way, each space and each diagram among spaces, determines a (commutative) $\Bbbk$-stack and a diagram of (commutative) $\Bbbk$-stacks, respectively.  
In particular, the given map $X_0 \to X_1$ between spaces determines a map between (commutative) $\Bbbk$-stacks.  
Let us suppose that $X_0 \simeq \ast$ is terminal.  
$\cE_1$-deformations of this map between (commutative) $\Bbbk$-stacks is organized as a functor from local Artin $\cE_1$-$\Bbbk$-algebras.
Koszul duality asserts that this functor is represented by an augmented $\cE_1$-algebra $\TT^{\cE_1}_\ast X_1$.  
For formal reasons, this representing augmented $\cE_1$-$\Bbbk$-algebra $\TT^{\cE_1}_\ast X_1$ is the universal enveloping $\cE_1$-algebra of the Lie algebra $\TT_\ast X_1$, which is the tangent space of the (commutative) $\Bbbk$-stack $X_1$ at its point.  
For other formal reasons, this representing augmented $\cE_1$-$\Bbbk$-algebra $\TT^{\cE_1}_\ast X_1$ is the group ring $\Bbbk[\Omega X] := \sC_\ast(\Omega X_1 ; \Bbbk)$ on the group $\Omega X_1$ which $\sfN(\un{X})$ codifies, as it is equipped with its standard augmentation.  
Conversely, using the assumption that the map between spaces $\ast \to X_1$ is $0$-connective gives that the canonical map from the Maurer--Cartan $\cE_1$-$\Bbbk$-stack,
\[
{\sf MC}_{\Bbbk[\Omega X_1]} =: {\sf Spec}(\Bbbk)_{/\Bbbk[\Omega X_1]} 
\xra{~\simeq~}
X_1     ~,
\]
is an equivalence (as $\cE_1$-$\Bbbk$-stacks).
Though less developed, we anticipate a similar interpretation of Corollary~\ref{main.corollary} for the general $n=1$ case (in which $X_0$ is general). 
Specifically,
\begin{itemize}
\item
the 1-groupoid $\sfN(\un{X})$, which we regard as an unstable version of an $\cE_1$-algebroid over $X_0$, represents $\cE_1$-deformations of the map $X_0 \to X_1$ between (commutative) $\Bbbk$-stacks;

\item
the connectivity of the map $X_0 \to X_1$ between spaces ensures that $X_1$, as an $\cE_1$-$\Bbbk$-stack, is the Maurer--Cartan $\cE_1$-$\Bbbk$-stack of this $\cE_1$-algebroid over $X_0$.

\end{itemize}
\end{remark}

\begin{remark}
We follow-up on Remark~\ref{r6}.
Though even less developed, we speculate a further interpretation of Corollary~\ref{main.corollary} for the case of general $n$.
Specifically, for $\un{X} = (X_0 \to \dots \to X_n)$ an $n$-flagged $\infty$-groupoid, 
\begin{itemize}
\item
the n-groupoid $\sfN(\un{X})$, which we regard as an unstable version of an $\cE_1$-algebroid over an $\cE_1$-algebroid over ... over an $\cE_1$-algebroid over $X_0$, represents compatible $\cE_1$-deformations of each map $X_i \to X_{i+1}$ in the given flag $\un{X}$;

\item
the connectivity of each map $X_i \to X_{j}$ ensures that $X_n$, as an $\cE_n$-$\Bbbk$-stack, is the Maurer--Cartan $\cE_n$-$\Bbbk$-stack of this iterated $\cE_1$-algebroid over $X_0$.

\end{itemize}
\end{remark}

\subsection{A conjecture}
We state a conjecture, and some related problems, that are prompted by this work.
To state our conjecture we single out a class of diagrams in $\Cat_n$.  

\begin{definition}\label{def.gaunt.diag}
A \emph{gaunt colimit diagram in $\Cat_n$} is a functor $\cJ^{\tr} \to \Cat_n$ for which, for each $0\leq i \leq n$, the composite functor $\cJ^{\tr} \to \Cat_n \xra{(-)_{\leq i}} \Cat_i$ is a colimit diagram.  

\end{definition}

\begin{example}
For each $0<i<p$, the diagram
\[
\xymatrix{
\{i\}    \ar[r]  \ar[d]
&
\{i<\dots<p\}  \ar[d]
\\
\{0<\dots<i\}  \ar[r]
&
\{0<\dots<p\}
}
\]
is a gaunt colimit diagram in $\Cat_1$.
More generally, for each Segal cover $\cJ^{\tr} \to \bTheta_n$, the composite functor
\[
\cJ^{\tr} \longrightarrow \bTheta_n \longrightarrow \Cat_n
\]
is a gaunt colimit diagram.  

\end{example}

\begin{example}
While the diagram in $\Cat_1$
\[
\xymatrix{
\{0<2\}\amalg \{1<3\}  \ar[r] \ar[d]
&
\{0<1<2<3\} \ar[d]
\\
\ast\thinspace\amalg \thinspace\ast \ar[r]
&
\ast
}
\]
is a colimit diagram, it is \emph{not} a gaunt colimit diagram.  
Note, however, that the map from the colimit of maximal $\infty$-subgroupoids to the maximal $\infty$-subgroupoid of $\ast$,
$\infty$-groupoids
\[
\Bigl(\{0<1<2<3\}^\sim \underset{\{0<2\}^\sim\amalg \{1<3\}^\sim}\coprod \ast^\sim \amalg \ast^\sim   \Bigr)
~\simeq~ \{-\}\amalg \{+\} \longrightarrow \ast~,
\]
is $0$-connective, which is to say that it is surjective on path components.  

\end{example}

\begin{example}
Consider the ordinary 1-category $\sE(c_1)$ corepresenting an isomorphism.  
Consider the functor ${\bDelta}_{/\sE(c_1)}:= \bDelta\underset{\Cat_1}\times (\Cat_1){/\sE(c_1)} \to \bDelta\hookrightarrow \Cat_1$ from the slice category.  
Consider its terminal extension
\[
({\bDelta}_{/\sE(c_1)})^{\tr} \longrightarrow \Cat_1~.
\]
While this functor is a colimit diagram, it is \emph{not} a gaunt colimit diagram.  
More generally, for each $0<i\leq n$, consider the strict $n$-groupoidification $\sE(c_i)$ of the $i$-cell.
While the composite functor
\[
({\bTheta_n}_{/\sE(c_i)})^{\tr} \longrightarrow \bTheta_n ~\hookrightarrow~\Cat_n
\]
is a colimit diagram, it is \emph{not} a gaunt colimit diagram.

\end{example}

\begin{remark}
Heuristically, a colimit diagram $\cJ^{\tr}\to \Cat_n$ is \emph{gaunt} if it does not generate invertible $i$-morphisms for any $i$.

\end{remark}

We make the following.
\begin{conj}\label{main.result.2}
The restricted Yoneda functor
\[
\fCat_n \xra{~(\ref{0})~} \PShv(\Cat_n)
\]
is fully faithful, with image consisting of those presheaves that carry (the opposites of) gaunt colimit diagrams to limit diagrams.  

\end{conj}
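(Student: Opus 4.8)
The plan is to deduce the conjecture from Theorem~\ref{main.result.1} by comparing presheaves on $\Cat_n$ with presheaves on $\bTheta_n$ through restriction along the fully faithful functor $\jmath\colon \bTheta_n \hookrightarrow \Cat_n$ of~(\ref{e13}). Write $\nu\colon \fCat_n \to \PShv(\Cat_n)$ for the restricted Yoneda functor of~(\ref{0}) under scrutiny, write $\jmath^*\colon \PShv(\Cat_n)\to \PShv(\bTheta_n)$ for restriction, and write $\Shv^{\sf gaunt}(\Cat_n)\subset \PShv(\Cat_n)$ for the full $\infty$-subcategory of presheaves carrying (opposites of) gaunt colimit diagrams to limit diagrams. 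Because restricted Yoneda functors compose, there is a canonical identification $\jmath^*\circ \nu \simeq \sfN$; so Theorem~\ref{main.result.1} supplies, for free, an equivalence $\jmath^*\circ\nu\colon \fCat_n \xra{\simeq} \Shv(\bTheta_n)$. The strategy is to promote this to the statement that $\nu$ itself is an equivalence onto $\Shv^{\sf gaunt}(\Cat_n)$.

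First I would verify that $\nu$ lands in $\Shv^{\sf gaunt}(\Cat_n)$. For this I claim the maximal-flag functor $\iota\colon \Cat_n \hookrightarrow \fCat_n$ of Observation~\ref{maximal.underlying} carries gaunt colimit diagrams to colimit diagrams in $\fCat_n$. Indeed, for a gaunt colimit diagram with colimit $\cD=\colim_\cJ \cD_j$, the defining property makes each truncation $(\colim_\cJ \cD_j)_{\leq i}\simeq \colim_\cJ (\cD_j)_{\leq i}$ a colimit in $\Cat_i$; since the inclusions $\Cat_i \hookrightarrow \Cat_n$ of Observation~\ref{maximals} are left adjoints, these colimits agree with the pointwise colimits in $\Fun([n],\Cat_n)$, so the pointwise colimit of the diagram $j\mapsto \iota\cD_j$ is exactly $\iota\cD$. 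As $\fCat_n\subset \Fun([n],\Cat_n)$ is full, this pointwise colimit is also the colimit in $\fCat_n$. Applying $\Map_{\fCat_n}(-,\un{\cC})$ then exhibits $\nu(\un{\cC})=\Map_{\fCat_n}(\iota(-),\un{\cC})$ as sending gaunt colimit diagrams to limit diagrams. Dually, since every Segal cover becomes a gaunt colimit diagram in $\Cat_n$, restriction carries $\Shv^{\sf gaunt}(\Cat_n)$ into $\Shv(\bTheta_n)$.

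The heart of the argument is to show that the resulting functor $\jmath^*\colon \Shv^{\sf gaunt}(\Cat_n)\to \Shv(\bTheta_n)$ is an equivalence; granting this, the factorization $\sfN\simeq \jmath^*\circ\nu$ together with Theorem~\ref{main.result.1} forces $\nu\colon \fCat_n\xra{\simeq}\Shv^{\sf gaunt}(\Cat_n)$ to be an equivalence, which is the conjecture. Essential surjectivity of $\jmath^*$ is immediate from the equivalence $\jmath^*\circ\nu\simeq \sfN$. The content is full faithfulness, equivalently: a Segal sheaf $\cG$ on $\bTheta_n$ extends \emph{uniquely} to a gaunt-descent presheaf on $\Cat_n$. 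I would produce the extension as $\nu\circ\sfN^{-1}(\cG)$ and must show that any gaunt-descent presheaf $\cF$ with $\jmath^*\cF\simeq\cG$ agrees with it. The difficulty is that $\Cat_n\simeq \Shv^{\sf unv}(\bTheta_n)$ (Observation~\ref{t23}) is built from $\bTheta_n$ not only by Segal (hence gaunt) colimits but also by \emph{univalence} colimits, and the latter are exactly the colimit diagrams that gaunt-descent presheaves need not respect; so the value of $\cF$ on a non-gaunt object such as $\sE(c_i)$ is not constrained by the univalence colimit presenting it.

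The main obstacle, then, is to show that these values are nevertheless pinned down by the Segal data on $\bTheta_n$. Here I would invoke the univalent-completion formula established earlier in the paper: the walking-invertibility objects $\sE(c_i)$, and more generally the generators of $\Cat_n$ over $\Shv(\bTheta_n)$, should be resolved through gaunt colimits of $\bTheta_n$-objects once the invertible cells have been accounted for by that formula, so that $\cF(\sE(c_i))$ is computed as the space of invertible $i$-cells extractable from $\cG$ by its composition structure. Concretely, I would filter the construction of $\Shv^{\sf unv}(\bTheta_n)$ from $\Shv(\bTheta_n)$ and check, stage by stage, that the gaunt-descent condition together with the explicit univalent-completion formula determines $\cF$ on the new generators; the connectivity constraints defining $\fCat_n$ are what guarantee the extracted invertibility data assemble into well-defined values. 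Verifying this compatibility — that gaunt descent plus the univalent-completion formula forces uniqueness across the univalence colimits — is where the real work lies, and I expect it to be the crux of any proof of the conjecture.
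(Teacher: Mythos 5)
The statement you are addressing is Conjecture~\ref{main.result.2}; the paper offers no proof of it and explicitly leaves the surrounding questions (e.g.\ recognizing when a colimit diagram is gaunt, describing pushouts along $\partial c_i \to c_i$) as open problems, so there is no argument of the authors to compare yours against. Judged on its own terms, your reduction is sound as far as it goes: the verification that the right adjoint $\iota\colon\Cat_n\hookrightarrow\fCat_n$ of Observation~\ref{maximal.underlying} carries gaunt colimit diagrams to colimit diagrams (computed pointwise in $\Fun([n],\Cat_n)$, using that each $\Cat_i\hookrightarrow\Cat_n$ of Observation~\ref{maximals} preserves colimits and that $\fCat_n$ is a full $\infty$-subcategory) is correct, hence so is the claim that the restricted Yoneda functor $\nu$ lands in $\Shv^{\sf gaunt}(\Cat_n)$; and the observation that the conjecture would follow from Theorem~\ref{main.result.1} once $\jmath^\ast\colon\Shv^{\sf gaunt}(\Cat_n)\to\Shv(\bTheta_n)$ is shown to be an equivalence is a legitimate strategy.

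But that last step \emph{is} the conjecture, and you have not supplied it --- as you acknowledge. Full faithfulness of $\jmath^\ast$ on gaunt-descent presheaves amounts to showing that every $(\infty,n)$-category admits a canonical, functorial presentation as an iterated gaunt colimit of objects of $\bTheta_n$, and that mapping spaces out of gaunt-descent presheaves are computed along such presentations; nothing in the paper supplies these resolutions, and the univalent-completion formula of Proposition~\ref{completion} that you propose to invoke is built from colimits indexed by $\sE(\bullet)$, which are precisely \emph{not} gaunt, so it cannot be fed to a gaunt-descent presheaf without further argument. Your diagnosis of where the obstruction lives is also slightly off target: in $\Cat_n$ the object $\sE(c_i)$ is equivalent to the terminal object (this is exactly what the univalence localization enforces), so the value of any presheaf on $\Cat_n$ at $\sE(c_i)$ is already pinned down by its value at $c_0\in\bTheta_n$. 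The genuinely unconstrained values are those at $(\infty,n)$-categories whose maximal sub-$\infty$-groupoids, or whose spaces of invertible $i$-morphisms, are not generated in an evident way from cells --- deloopings $\fB G$ and their higher analogues being the first test cases --- and showing that gaunt descent alone determines a presheaf there, together with its full mapping spaces, is the open content. As written, your proposal is a correct reformulation of the conjecture, not a proof of it.
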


\begin{remark}\label{reps}
We reflect on the statement of Conjecture~\ref{main.result.2}.  
By definition, the $\infty$-category $\Cat_n$ is presentable.  
Therefore, the image of the Yoneda functor $\Cat_n \to \PShv(\Cat_n)$, which is fully faithful, consists of those presheaves that carry (the opposites of) \emph{all} colimit diagrams to limit diagrams.  

\end{remark}

\begin{problem}
Identify checkable criteria for when a colimit diagram $\cJ^{\tr} \to \Cat_n$ is in fact a gaunt colimit diagram.

\end{problem}

\begin{problem}
Let $0\leq i \leq n$.
For $\partial c_i \to \cC$ a functor to a finite gaunt $n$-category, give an explicit description (for instance as a presheaf on $\bTheta_n$, or even as a presheaf on finite gaunt $n$-categories) of the pushout $(\infty,n)$-category
\[
\cC\underset{\partial c_i} \amalg c_i~.
\]

\end{problem}

\subsection*{Acknowledgements} 
We are grateful to Jacob Lurie, for his contributions to foundational topos theory, and to Charles Rezk, for his careful and operationally practical exposition of $(\infty,n)$-categories.

\section{Comparing Segal sheaves and flagged higher categories}
We establish adjunctions connecting the $\infty$-categories $\Shv(\bTheta_n)$ and $\fCat_n$.  
In doing so, we introduce some interpolating $\infty$-categories.

\subsection{An adjunction}

Recall the sequence of functors~(\ref{e5}); for each $0\leq i\leq j\leq n$, denote that functor as
\begin{equation}\label{e6}
\iota := \iota_{ij} \colon \bTheta_i \hookrightarrow \bTheta_j
\end{equation}
where the subscripts are omitted if the indices are understood from context.  
The sequence~(\ref{e5}) of fully faithful right adjoint functors is selected by a functor:
\[
\bTheta_\bullet\colon [n]
\xra{\lag (\ref{e5}) \rag}
\Cat 
~,\qquad
i\mapsto \bTheta_i~.
\]
Postcomposing this functor with the functor $\PShv\colon \Cat \to {\sf Pr}^\sL$, to presentable $\infty$-categories and left adjoint functors among them, results in a functor $[n]\xra{\bTheta_\bullet} \Cat\xra{\PShv} {\sf Pr}^\sL$.  
The unstraightening of this functor is an $\infty$-category over $[n]$,
\begin{equation}\label{e7}
\PShv(\bTheta_\bullet) 
\longrightarrow
[n]~,
\end{equation}
which is both a coCartesian fibration and a Cartesian fibration.

\begin{remark}\label{r1}
Let $0\leq i\leq j\leq n$.  
Consider the morphism $c_1\xra{\lag i \leq j \rag} [n]$.
The coCartesian monodromy functor of~(\ref{e7}) over the morphism is the unique colimit preserving functor $\iota_!\colon \PShv(\bTheta_i) \to \PShv(\bTheta_j)$ extending the composite functor $\bTheta_i \hookrightarrow \bTheta_j \hookrightarrow \PShv(\bTheta_j)$.  
The Cartesian monodromy functor of~(\ref{e7}) over this same morphism is the functor $\PShv(\bTheta_i)\la \PShv(\bTheta_j)\colon \iota^\ast$ given by pullback along $\bTheta_i \hookrightarrow \bTheta_j$.  
Notice that both of these monodromy functors preserve colimits, and that the coCartesian monodromy functor is left adjoint to the Cartesian monodromy functor.  

\end{remark}

Consider the $\infty$-category of sections of~(\ref{e7}):
\[
\Gamma\bigl(\PShv(\bTheta_\bullet)\bigr) 
~:=~
\Fun_{/[n]}\bigl( [n] , \PShv(\bTheta_\bullet) \bigr)~.
\]
Explicitly, an object of $\Gamma\bigl(\PShv(\bTheta_\bullet)\bigr)$ is, for each $0\leq i\leq n$, a presheaf $\cF_i\in \PShv(\bTheta_i)$, together with, for each $0<i\leq n$, a morphism $\iota_!\cF_{i-1}\to \cF_i$ between presheaves on $\bTheta_i$.  
Now, because $n\in [n]$ is a final object, Cartesian monodromy of the unique morphisms in $[n]$ to this final object define a functor from the fiber over $n$ to this $\infty$-category of sections:
\begin{equation}\label{e8}
\PShv(\bTheta_n)
\longrightarrow
\Gamma\bigl(\PShv(\bTheta_\bullet)\bigr) ~.
\end{equation}
This functor~(\ref{e8}) is fully faithful, and its image consists of the \emph{Cartesian} sections, which are those sections that carry morphisms to~(\ref{e7})-Cartesian morphisms.  
Precomposing with the Yonda functor $\bTheta_n \hookrightarrow \PShv(\bTheta_n)$ determines the solid diagram among $\infty$-categories:
\[
\xymatrix{
\bTheta_n  \ar[rr]^-{\sf ff}_-{\rm Yoneda}  \ar[d]_-{\sf ff}
&&
\PShv(\bTheta_n)  \ar[rr]^-{\sf ff}_-{(\ref{e8})}    \ar@{-->}[dll]^-{\sf id}  
&&
\Gamma\bigl(\PShv(\bTheta_\bullet)\bigr)   \ar@{-->}@(d,-)[dllll]^-{\sfN}
\\
\PShv(\bTheta_n)  
&&
&&
}
\]
Left Kan extensions define the fillers in this diagram, which is indeed a commutative diagram because each of the solid arrows is a fully faithful functor.
From the universal property of the Yoneda functor, the inner filler is the identity functor on $\PShv(\bTheta_n)$, as indicated.
As is always the case for left Kan extensions through a Yoneda functor, the outer filler is the restricted Yoneda functor.
(We give this left Kan extension the same notation as~(\ref{0}) because it extends that functor, as we will see.)
From the universal property of left Kan extensions, the resulting triangle among presentable $\infty$-categories
\[
\xymatrix{
&&&
&&&
\Gamma\bigl(\PShv(\bTheta_\bullet)\bigr)     \ar@(r,r)[dd]^-{\sf id}   \ar[dll]^-{\sfN}
\\
\bTheta_n    \ar[rr]      \ar@(d,-)[drrrrrr]  \ar@(u,-)[urrrrrr]  
&&
\PShv(\bTheta_n)  \ar[rr]^-{\sf id}    \ar@(d,-)[drrrr]^-{(\ref{e8})}    \ar@(u,-)[urrrr]_-{(\ref{e8})}    
&&
\PShv(\bTheta_n)      \ar[drr]^-{(\ref{e8})}  
&&
\Rightarrow
\\
&&&
&&&
\Gamma\bigl(\PShv(\bTheta_\bullet)\bigr) 
}
\]
lax-commutes, as indicated.
By construction, the resulting outer lax-commutative triangle among $\infty$-categories is, in fact, a commutative triangle.  
From the universal property of the Yoneda functor $\bTheta_n \to \PShv(\bTheta_n)$ as a colimit completion, it follows that the second-to-outer lax-commutative triangle is also, in fact, a commutative triangle.
This concludes the construction of an adjunction
\begin{equation}\label{e10}
(\ref{e8})\colon 
\PShv(\bTheta_n)  
~\rightleftarrows~
\Gamma\bigl(\PShv(\bTheta_\bullet)\bigr)  \colon \sfN   ~.
\end{equation}
Explicitly, the left adjoint evaluates on a presheaf $\cF\in \PShv(\bTheta_n)$ as the section $i\mapsto \cF_{|\bTheta_i^{\op}}$; in the case that $\cF$ is represented by an object $T\in \bTheta_n$, we implement the other notation $T_{\leq \bullet}$ in place of $\cF_{|\bTheta_\bullet^{\op}}$. 
Explicitly, the right adjoint evaluates on a section $\cF_\bullet$ as the presheaf $T\mapsto \Map_{\Gamma}(T_{\leq \bullet} , \cF_\bullet)$, whose values are spaces of morphisms in the $\infty$-category $\Gamma\bigl( \PShv(\bTheta_\bullet)\bigr)$.

Inspecting the definition of the restricted Yoneda functor $\sfN\colon \Gamma\bigl(\PShv(\bTheta_\bullet)\bigr)  \to \PShv(\bTheta_n)$, as $n$ varies, reveals the following.
\begin{observation}\label{t3}
For each $0\leq i\leq j\leq n$, the diagram among $\infty$-categories
\[
\xymatrix{
\PShv(\bTheta_j)  \ar[d]
&&
\Gamma\bigl(\PShv(\bTheta_{\bullet \leq j})\bigr)   \ar[d]    \ar[ll]_-{\sfN}
\\
\PShv(\bTheta_i) 
&&
\Gamma\bigl(\PShv(\bTheta_{\bullet \leq i})\bigr)    \ar[ll]_-{\sfN}
}
\]
canonically commutes.

\end{observation}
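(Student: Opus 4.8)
The plan is to identify the two vertical functors explicitly, unwind the formula for $\sfN$ furnished by the adjunction~(\ref{e10}), and then match the two composites; the only nonformal input will be a coCartesian-ness assertion about the sections attached to objects of $\bTheta_i$. First I would record the verticals. Since the defining functor $\bTheta_\bullet$ of~(\ref{e5}) restricts along $[i]\hookrightarrow[j]$, so does the fibration $\PShv(\bTheta_{\bullet\leq j})\to[j]$, restricting to $\PShv(\bTheta_{\bullet\leq i})\to[i]$; hence the right-hand vertical is restriction of sections $\rho\colon\Gamma\bigl(\PShv(\bTheta_{\bullet\leq j})\bigr)\to\Gamma\bigl(\PShv(\bTheta_{\bullet\leq i})\bigr)$ along $[i]\hookrightarrow[j]$, and the left-hand vertical is the Cartesian monodromy $\iota_{ij}^{\ast}\colon\PShv(\bTheta_j)\to\PShv(\bTheta_i)$ of Remark~\ref{r1}. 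Using the explicit right adjoint from~(\ref{e10}), for a section $\cF_\bullet$ over $[j]$ and $S\in\bTheta_i$ the two composites evaluate to
\[
\bigl(\iota_{ij}^{\ast}\sfN(\cF_\bullet)\bigr)(S)=\Map_{\Gamma}\bigl(S_{\leq\bullet}^{(j)},\cF_\bullet\bigr)\qquad\text{and}\qquad\bigl(\sfN(\rho\cF_\bullet)\bigr)(S)=\Map_{\Gamma}\bigl(S_{\leq\bullet}^{(i)},\cF_{\bullet\leq i}\bigr),
\]
where $S_{\leq\bullet}^{(j)}$ is the image under~(\ref{e8}) of the representable on $\iota_{ij}S\in\bTheta_j$, and $S_{\leq\bullet}^{(i)}$ that of the representable on $S\in\bTheta_i$. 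A short computation using the full faithfulness of $\iota_{ij}$ shows $S_{\leq\bullet}^{(j)}$ restricts over $[i]$ to $S_{\leq\bullet}^{(i)}$, so it remains to compare a mapping space of sections over $[j]$ with the corresponding one over $[i]$.

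Write $\lambda_{ba}\colon\bTheta_b\to\bTheta_a$ for the left adjoint to $\iota_{ab}$, which exists by Observation~\ref{maximals}. The key lemma is that, for $S\in\bTheta_i$, the section $S_{\leq\bullet}^{(j)}$ is coCartesian over the subinterval $[i,j]\subseteq[j]$: for $i<k\leq j$ the structure map $\iota_!\,S_{\leq k-1}^{(j)}\to S_{\leq k}^{(j)}$ is an equivalence. To prove this I would first identify the coCartesian monodromy $\iota_!$ over the edge $k-1\to k$ (left Kan extension along $\iota_{k-1,k}$) with precomposition $\lambda_{k,k-1}^{\ast}$: the adjunction $\lambda_{k,k-1}\dashv\iota_{k-1,k}$ induces $\lambda_{k,k-1}^{\ast}\dashv\iota_{k-1,k}^{\ast}$ on presheaves, and since $\iota_!$ is by definition left adjoint to $\iota_{k-1,k}^{\ast}$, uniqueness of adjoints gives $\iota_!\simeq\lambda_{k,k-1}^{\ast}$.

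With this identification, both $\iota_!\,S_{\leq k-1}^{(j)}$ and $S_{\leq k}^{(j)}$ become restrictions of the representable on $\iota_{ij}S$, and I would compute them separately: using the adjunction $\lambda_{ji}\dashv\iota_{ij}$, the relations $\lambda_{j,k-1}\iota_{k-1,j}\simeq\id$ and $\lambda_{jk}\iota_{kj}\simeq\id$ (full faithfulness of the inclusions), and the composability $\lambda_{k-1,i}\lambda_{k,k-1}\simeq\lambda_{ki}\simeq\lambda_{ki}\lambda_{jk}\iota_{kj}$, both reduce to the presheaf $c\mapsto\Map_{\bTheta_i}\bigl(\lambda_{ki}(c),S\bigr)$ on $\bTheta_k$. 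The comparison map between them is the one induced by the adjunction unit $\id\to\iota_{k-1,k}\lambda_{k,k-1}$, and the triangle identities identify it with the identity of $\Map_{\bTheta_i}(\lambda_{ki}(-),S)$. Here the full faithfulness of $\iota_{ij}$, hence $S\in\bTheta_i$, is used essentially, since for a representable on a general object of $\bTheta_j$ the reduction via $\lambda_{ji}\dashv\iota_{ij}$ is unavailable.

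Given the key lemma I would finish by a peeling argument over $[j]$. If $\sigma$ is any section coCartesian over the top edge $k-1\to k$ and $\tau$ is arbitrary, then in a morphism $\sigma\to\tau$ the component at $k$ is forced by the component at $k-1$ through the invertible structure map of $\sigma$; hence restriction induces an equivalence $\Map_{\Gamma}(\sigma,\tau)\simeq\Map_{\Gamma'}(\sigma|,\tau|)$ onto sections over $[0,k-1]$. Inducting from $j$ down to $i$ and applying this to $\sigma=S_{\leq\bullet}^{(j)}$ (coCartesian over $[i,j]$) and $\tau=\cF_\bullet$ yields $\Map_{\Gamma}\bigl(S_{\leq\bullet}^{(j)},\cF_\bullet\bigr)\simeq\Map_{\Gamma}\bigl(S_{\leq\bullet}^{(i)},\cF_{\bullet\leq i}\bigr)$, the asserted identification; these equivalences are natural in $S$ and in $\cF_\bullet$, being induced by the structural equivalences, so they assemble into an equivalence of the two functors $\Gamma\bigl(\PShv(\bTheta_{\bullet\leq j})\bigr)\to\PShv(\bTheta_i)$, which is the claimed canonical commutativity. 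I expect the key lemma to be the main obstacle: pinning down that the comparison map $\iota_!\,S_{\leq k-1}^{(j)}\to S_{\leq k}^{(j)}$ is itself an equivalence, rather than merely that its source and target are abstractly equivalent, is the only step that invokes genuine structure of $\bTheta_\bullet$ (the inclusions being fully faithful right adjoints) beyond formal properties of fibrations.
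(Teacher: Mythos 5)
Your argument is correct, and it fills in a step the paper does not actually prove: Observation~\ref{t3} is justified only by the phrase ``inspecting the definition of the restricted Yoneda functor,'' so there is no written proof to compare against, and your proposal is a legitimate way of carrying out that inspection. The three ingredients you isolate are all sound: the identification of the coCartesian monodromy $\iota_!$ with precomposition along the truncation $\lambda_{k,k-1}$ (uniqueness of left adjoints to $\iota_{k-1,k}^{\ast}$); the computation that for $S\in\bTheta_i$ every structure map of $S^{(j)}_{\leq\bullet}$ above level $i$ is $\Map_{\bTheta_i}(\lambda_{ki}(\eta_{(-)}),S)$ with $\eta$ the unit, which $\lambda_{ki}$ inverts by a triangle identity; and the peeling step, which is exactly Observation~\ref{mapping.spaces} (recorded later in the paper) specialized to a source section whose top structure map is invertible. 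You are also right that the restriction $S\in\bTheta_i$ is essential, as $T_{\leq\bullet}$ for general $T\in\bTheta_j$ fails to be coCartesian above level $i$. A marginally shorter route to the same conclusion is to check instead that the square of left adjoints commutes: the left adjoint to restriction of sections along $[i]\hookrightarrow[j]$ sends $\cG_\bullet$ to the section with value $\iota_{ik!}\cG_i$ at $k>i$, and commutativity reduces to $\iota_{kj}^{\ast}\iota_{kj!}\simeq\id$ for the fully faithful $\iota_{kj}$; but this uses the same nonformal input as your key lemma.
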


\subsection{Restricting the adjunction}
We now show that the adjunction~(\ref{e10}) restricts to Segal objects.

Recall the $\infty$-category of~(\ref{e7}).
Consider the full $\infty$-subcategories
\begin{equation}\label{e11}
\PShv(\bTheta_\bullet)
~\supset~
\Shv(\bTheta_\bullet)
~\supset~
\Shv^{\sf unv}(\bTheta_\bullet)
\end{equation}
consisting of those pairs $\bigl(i\in [n] , \cF\in \PShv(\bTheta_i)\bigr)$ for which $\cF_i\in \Shv(\bTheta_i)$, and for which $\cF_i\in \Shv^{\sf unv}(\bTheta_i)$, respectively.

\begin{lemma}\label{t6}
In the commutative diagram among $\infty$-categories,
\[
\xymatrix{
\PShv(\bTheta_\bullet)  \ar[drr]  
&&
\Shv(\bTheta_\bullet)  \ar[d]    \ar[ll]
&&
\Shv^{\sf unv}(\bTheta_\bullet)  \ar[dll]     \ar[ll]
\\
&&
[n]
&&    ,
}
\]
each of the vertical functors is both a coCartesian fibration and a Cartesian fibration, and each of the horizontal functors is fully faithful and preserves coCartesian morphisms and Cartesian morphisms over $[n]$.  
\end{lemma}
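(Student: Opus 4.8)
The plan is to deduce the whole statement from one general fact about fibrations, together with the input recorded in Observation~\ref{maximals}: for each $0\leq i\leq j\leq n$, both the functor $\iota=\iota_{ij}\colon \bTheta_i\hookrightarrow\bTheta_j$ \emph{and} its left adjoint preserve Segal covers and univalence diagrams. First I would invoke the standard principle (see~\cite{HTT}) that if $p\colon \cE\to\cB$ is a coCartesian (resp.\ Cartesian) fibration and $\cE'\subseteq\cE$ is a full subcategory such that coCartesian (resp.\ Cartesian) transport carries $\cE'$ into itself, then $p|_{\cE'}$ is again a coCartesian (resp.\ Cartesian) fibration, its (co)Cartesian morphisms are exactly those of $\cE$ lying in $\cE'$, and the inclusion $\cE'\hookrightarrow\cE$ preserves (co)Cartesian morphisms. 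Since~(\ref{e7}) is both a coCartesian and a Cartesian fibration over $[n]$, and since the horizontal functors in the lemma are full by construction (so fully faithfulness is automatic), the entire statement reduces to checking that, for each $0\leq i\leq j\leq n$, \emph{both} monodromy functors of~(\ref{e7}) over $i\leq j$ carry Segal sheaves to Segal sheaves, and univalent Segal sheaves to univalent Segal sheaves. It suffices to treat the generating steps $j=i+1$, but the argument is uniform in $i,j$.

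By Remark~\ref{r1} the Cartesian monodromy is the restriction functor $\iota^\ast\colon \PShv(\bTheta_j)\to\PShv(\bTheta_i)$. This direction is immediate: a Segal sheaf (resp.\ univalent Segal sheaf) $\cF$ on $\bTheta_j$ sends the opposite of each Segal cover (resp.\ univalence diagram) to a limit, and because $\iota$ preserves Segal covers and univalence diagrams (Observation~\ref{maximals}), the presheaf $\iota^\ast\cF=\cF\circ\iota^{\op}$ evaluates each Segal cover (resp.\ univalence diagram) in $\bTheta_i$ on its $\iota$-image, whence it too sends these to limits. Thus $\iota^\ast$ preserves both subcategories, so~(\ref{e7}) restricts to a Cartesian fibration on each level of~(\ref{e11}), with the inclusions preserving Cartesian morphisms.

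The coCartesian monodromy is the left Kan extension $\iota_!\colon\PShv(\bTheta_i)\to\PShv(\bTheta_j)$, and here lies the one genuine obstacle: a priori a left adjoint has no reason to respect a descent-type (limit) condition such as the Segal or univalence condition. The resolution I would use is to re-express $\iota_!$ as a restriction. Let $\lambda=\lambda_{ij}\colon\bTheta_j\to\bTheta_i$ denote the left adjoint of $\iota$ furnished by Observation~\ref{maximals}. Then there is a canonical equivalence $\iota_!\simeq\lambda^\ast$: both functors preserve colimits (one as a left adjoint, the other as a restriction), and both send the representable $T\in\bTheta_i$ to the representable $\iota(T)\in\bTheta_j$, using $\lambda\dashv\iota$ for the latter; since representables generate under colimits, the two colimit-preserving functors agree. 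This is the identity "restriction along a left adjoint equals left Kan extension along its right adjoint."

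With $\iota_!\simeq\lambda^\ast$ in hand, the coCartesian direction is now of exactly the same type as the Cartesian one. Because $\lambda$ also preserves Segal covers and univalence diagrams (Observation~\ref{maximals}), the restriction $\lambda^\ast=\iota_!$ carries Segal sheaves to Segal sheaves and univalent Segal sheaves to univalent Segal sheaves, by the argument of the second paragraph applied to $\lambda$ in place of $\iota$. Hence both subcategories are closed under coCartesian transport, so~(\ref{e7}) restricts to a coCartesian fibration on each level of~(\ref{e11}) with the inclusions preserving coCartesian morphisms; combined with the Cartesian statement this proves the lemma. The crux, then, is precisely the recognition that the coCartesian transport is secretly a restriction along $\lambda$, which converts an otherwise delicate "left Kan extension preserves descent" claim into the elementary observation that pullback along a cover-preserving functor preserves sheaves.
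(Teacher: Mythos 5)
Your proposal is correct and follows essentially the same route as the paper: the paper likewise reduces the lemma to showing that the adjunction $\iota_!\colon \PShv(\bTheta_i)\rightleftarrows\PShv(\bTheta_j)\colon\iota^\ast$ restricts to Segal sheaves and then to univalent Segal sheaves, using that $\iota$ and its left adjoint preserve Segal covers and univalence diagrams (Observation~\ref{maximals}). If anything your write-up is more explicit than the paper's one-line ``the adjunction restricts'': the identification $\iota_!\simeq\lambda^\ast$ is precisely the mechanism, left implicit in the paper, that makes the coCartesian (left Kan extension) direction reduce to the same elementary restriction argument as the Cartesian one.
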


\begin{proof}
By Definition~\ref{def.segal.cov}, both Segal covers and univalence diagrams are, in particular, limit diagrams in $\bTheta_n$.  
By direct inspection, for each $0\leq i\leq j \leq n$, the fully faithful functor $\bTheta_i \hookrightarrow \bTheta_j$ carries Segal covers to Segal covers and carries univalence diagrams to univalence diagrams. 
From these two points, it follows that the adjunction $\iota_!\colon \PShv(\bTheta_i) \rightleftarrows \PShv(\bTheta_j)\colon \iota^\ast$ restricts as an adjunction
\[
\iota_!\colon \Shv(\bTheta_i)
\rightleftarrows
\Shv(\bTheta_j) \colon \iota^\ast ~,
\]
which further restricts as an adjunction
\[
\iota_!\colon \Shv^{\sf unv}(\bTheta_i)
\rightleftarrows
\Shv^{\sf unv}(\bTheta_j) \colon \iota^\ast ~,
\]
The lemma follows. 
\end{proof}

For each $0\leq i \leq n$, via Bousfield localization, the fully faithful inclusion between presentable $\infty$-categories, $\Shv^{\sf unv}(\bTheta_i) \hookrightarrow \Shv(\bTheta_i)$, is a right adjoint:
\begin{equation}\label{e20}
(-)^{\widehat{~}}_{\sf unv}\colon \Shv(\bTheta_i) ~\rightleftarrows~ \Shv^{\sf unv}(\bTheta_i)~.
\end{equation}
The left adjoint is \emph{univalent-completion}.  
\begin{lemma}\label{t7}
The fully faithful inclusion 
\[
\Shv(\bTheta_\bullet)
\hookleftarrow 
\Shv^{\sf unv}(\bTheta_\bullet)
\]
is a right adjoint functor.  
Its left adjoint functor lies over $[n]$, and the adjunction is given on fibers over $i\in [n]$ as the Bousfield localization $(-)^{\widehat{~}}_{\sf unv}  \colon \Shv(\bTheta_i)
\rightleftarrows
\Shv^{\sf unv}(\bTheta_i)$ implementing univalent-completion.  
\end{lemma}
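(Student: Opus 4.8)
The plan is to construct the desired adjunction fiberwise and then assemble the fiberwise left adjoints into a single functor over $[n]$, using the compatibility of univalent-completion with the monodromy functors of the Cartesian/coCartesian fibration $\Shv(\bTheta_\bullet)\to[n]$.

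First I would recall from Lemma~\ref{t6} that $\Shv(\bTheta_\bullet)\to[n]$ and $\Shv^{\sf unv}(\bTheta_\bullet)\to[n]$ are each both coCartesian and Cartesian fibrations, and that the inclusion $\Shv^{\sf unv}(\bTheta_\bullet)\hookrightarrow\Shv(\bTheta_\bullet)$ preserves both Cartesian and coCartesian morphisms over $[n]$. Over each object $i\in[n]$, the inclusion on fibers is precisely the fully faithful inclusion $\Shv^{\sf unv}(\bTheta_i)\hookrightarrow\Shv(\bTheta_i)$, which by~(\ref{e20}) is a right adjoint, with left adjoint the univalent-completion $(-)^{\widehat{~}}_{\sf unv}$. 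So the fiberwise left adjoints exist; what remains is to promote these into a left adjoint of the total inclusion that lies over $[n]$.

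The key step is to invoke the standard criterion (Lurie's \cite{HTT}) for a fiberwise adjunction between Cartesian fibrations to globalize: if a functor of Cartesian fibrations over $[n]$ preserves Cartesian morphisms and admits a left adjoint on each fiber, then these fiberwise left adjoints assemble into a genuine left adjoint over $[n]$ provided the left adjoints are compatible with the Cartesian monodromy functors. Equivalently, by the dual criterion for coCartesian fibrations, one checks compatibility with the coCartesian monodromy $\iota_!$. Concretely, for $i\leq j$ one must verify that univalent-completion commutes with $\iota_!$, i.e. that the square relating $\iota_!\colon\Shv(\bTheta_i)\to\Shv(\bTheta_j)$ and the two univalent-completions commutes up to natural equivalence. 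Because $\iota_!$ is a left adjoint that preserves colimits and, by Observation~\ref{maximals}, preserves both Segal covers and univalence diagrams, the Bousfield localizations on either side are compatible: the localizing maps (univalence diagrams in $\bTheta_i$) are carried to localizing maps in $\bTheta_j$, so $\iota_!$ descends to the univalent-complete localizations and intertwines the two completion functors. This is exactly the Beck--Chevalley-type compatibility that the globalization criterion requires.

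The main obstacle will be checking this compatibility of univalent-completion with the monodromy functors carefully enough to apply the globalization lemma. The subtle point is that univalent-completion is a \emph{left} adjoint (a localization) whereas the transition functor that must interact with it correctly is determined by the fibration structure, and one must be sure one is comparing the completion against the correct (co)Cartesian monodromy. I would handle this by noting that $\iota_!$ is itself a left adjoint preserving univalence diagrams (Observation~\ref{maximals}), which forces the inclusions $\Shv^{\sf unv}(\bTheta_i)\hookrightarrow\Shv^{\sf unv}(\bTheta_j)$ to be compatible with the ambient inclusions into $\Shv(\bTheta_\bullet)$ in the manner required, after which the conclusion that the total left adjoint exists, lies over $[n]$, and restricts to $(-)^{\widehat{~}}_{\sf unv}$ on each fiber follows formally from the globalization criterion.
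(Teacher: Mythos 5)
Your proposal follows essentially the same route as the paper: both reduce the lemma to the existence of the fiberwise Bousfield localizations $(-)^{\widehat{~}}_{\sf unv}\colon \Shv(\bTheta_i)\rightleftarrows \Shv^{\sf unv}(\bTheta_i)$ together with their compatibility with the monodromy functors of the fibrations over $[n]$, and then invoke the standard globalization criterion for assembling fiberwise left adjoints into a relative left adjoint. The only real difference is how the compatibility square is produced: you check that $\iota_!$ carries univalence-local equivalences to univalence-local equivalences so that the two completions strictly intertwine, whereas the paper obtains the (lax) compatibility $2$-cell as the mate of the evidently commutative square of right adjoints (inclusions and restrictions along $\iota$) --- both verifications are valid, and yours in fact establishes the invertibility of the comparison map directly.
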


\begin{proof}
The existence of a left adjoint is a condition on the given fully faithful functor.  
Because the given fully faithful functor is a coCartesian functor between coCartesian fibrations over $[n]$, the result is proved upon showing that the following two points.
\begin{enumerate}
\item
For each $0\leq i\leq n$, the functor between fibers over $i$ is a right adjoint in an adjunction,
\[
\sL_i\colon \Shv(\bTheta_i)
\rightleftarrows
\Shv^{\sf unv}(\bTheta_i)~.
\]

\item
For each $0<i \leq n$, the diagram
\[
\xymatrix{
\Shv(\bTheta_{i-1})    \ar@(u,u)[rr]^-{\sL_{i-1}}  \ar[d]
&
\Downarrow
&
\Shv^{\sf unv}(\bTheta_{i-1})  \ar[d]
\\
\Shv(\bTheta_i)    \ar[rr]^-{\sL_i}  
&&
\Shv^{\sf unv}(\bTheta_i)  
}
\]
lax-commutes.  

\end{enumerate}
The first point is exactly the adjunction~(\ref{e20}), in which $\sL_i = (-)^{\widehat{~}}_{\sf unv}$ is univalent-completion.  
The fully faithful functor $\Shv^{\sf unv}(\bTheta_{i-1})  \hookrightarrow \Shv^{\sf unv}(\bTheta_i)$ is a left adjoint, with right adjoint given by restriction: $\cC\mapsto \cC_{<i}$.
Therefore, the sought lax-commutative diagram is implemented from the commutativity of the diagram involving right adjoints to the sought lax-commutative diagram:
\[
\xymatrix{
\Shv(\bTheta_{i-1})    
&
&
\Shv^{\sf unv}(\bTheta_{i-1})  \ar[ll]
\\
\Shv(\bTheta_i)    \ar[u]
&&
\Shv^{\sf unv}(\bTheta_i)    \ar[ll]  \ar[u]
}
\]
\end{proof}

Taking sections, Lemma~\ref{t7} has the following useful consequence.
\begin{cor}\label{t5}
The fully faithful inclusion 
\[
\Gamma\bigl(\Shv(\bTheta_\bullet)\bigr)  
\hookleftarrow 
\Gamma\bigl(\Shv^{\sf unv}(\bTheta_\bullet)\bigr)
\]
is a right adjoint functor.  
Its left adjoint carries a section $\cF_\bullet$ to the section $(\cF_\bullet)^{\widehat{~}}_{\sf unv}$, whose value on $i\in [n]$ is the univalent-completion of the Segal sheaf $\cF_i\in \Shv(\bTheta_i)$.  

\end{cor}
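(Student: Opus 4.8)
The plan is to obtain the corollary purely formally from Lemma~\ref{t7} by passing to sections: a relative adjunction over $[n]$ induces, via postcomposition, an adjunction between the corresponding $\infty$-categories of sections, computed levelwise. Recall from Lemma~\ref{t7} that the fully faithful inclusion $R\colon \Shv^{\sf unv}(\bTheta_\bullet) \hookrightarrow \Shv(\bTheta_\bullet)$ is a right adjoint over $[n]$, whose left adjoint $L$ is a functor over $[n]$ restricting on the fiber over $i\in [n]$ to univalent-completion $(-)^{\widehat{~}}_{\sf unv}$. Because $L$ preserves fibers and the localization is fiberwise, the unit $\id \Rightarrow RL$ and counit $LR\Rightarrow \id$ have vertical components, so the adjunction $L\dashv R$ is genuinely an adjunction relative to $[n]$.

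First I would pass to sections by postcomposition. Writing $\Gamma(-)=\Fun_{/[n]}([n],-)$, the functors $L$ and $R$ --- being functors over $[n]$ --- carry sections to sections, hence determine
\[
L_\ast\colon \Gamma\bigl(\Shv(\bTheta_\bullet)\bigr) \rightleftarrows \Gamma\bigl(\Shv^{\sf unv}(\bTheta_\bullet)\bigr) \colon R_\ast~.
\]
I would then argue $L_\ast \dashv R_\ast$ as follows. Postcomposition by the adjoint pair $L\dashv R$ yields an adjoint pair on the functor $\infty$-categories $\Fun([n],-)$, with unit and counit obtained by postcomposing those of $L\dashv R$. Since $L$ and $R$ lie over $[n]$, the induced functors commute with the projection $\Fun([n],-)\to \Fun([n],[n])$, acting as the identity there; restricting over the object $\id_{[n]}\in \Fun([n],[n])$ --- whose fiber is exactly $\Gamma(-)$ --- yields $L_\ast$ and $R_\ast$ together with their unit and counit on sections. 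The triangle identities are inherited from those on $\Fun([n],-)$, so $L_\ast\dashv R_\ast$.

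Finally I would read off the two assertions. As $R$ is fiberwise fully faithful and, by Lemma~\ref{t6}, the inclusion preserves coCartesian and Cartesian morphisms over $[n]$, the postcomposition $R_\ast$ is fully faithful; thus $\Gamma\bigl(\Shv^{\sf unv}(\bTheta_\bullet)\bigr)$ is a reflective full $\infty$-subcategory of $\Gamma\bigl(\Shv(\bTheta_\bullet)\bigr)$ with reflection $L_\ast$. Since $L_\ast$ is computed levelwise by $L$, its value on a section $\cF_\bullet$ satisfies $(L_\ast\cF_\bullet)(i) = L(\cF_i) = (\cF_i)^{\widehat{~}}_{\sf unv}$, the univalent-completion of the Segal sheaf $\cF_i$, as claimed.

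The main point requiring care --- and the only genuine obstacle --- is the coherence: that the fiberwise localization data assembles into unit and counit transformations on the section $\infty$-categories satisfying the triangle identities. This is precisely what the upgrade supplied by Lemma~\ref{t7} (from the fiberwise Bousfield localizations together with the lax-commuting square to a genuine adjunction over $[n]$) is designed to guarantee; granting that $L$ exists as a functor over $[n]$ left adjoint to the inclusion, the descent to sections is formal.
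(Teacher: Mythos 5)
Your proposal is correct and follows the paper's own route: the paper derives Corollary~\ref{t5} from Lemma~\ref{t7} with the single phrase ``taking sections,'' and your argument simply supplies the formal details of why a fiberwise adjunction over $[n]$ with vertical unit and counit descends, via postcomposition, to a levelwise-computed adjunction on section $\infty$-categories. The only superfluous step is invoking Lemma~\ref{t6} for full faithfulness of $R_\ast$, which already follows from $R$ being fully faithful and lying over $[n]$.
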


\begin{remark}\label{r8}
Verifying that a presheaf on $\bTheta_n$ satisfies the Segal condition, as defined in Definition~\ref{def.segal.cov}, can be reduced to a simpler problem, as we now explain.
Each closed morphism in $\bTheta_n$ is a monomorphism.
Therefore, for each $T\in \bTheta_n^{\sf cls}$, the overcategory $\bTheta^{\sf cls}_{n/T}$ is a a poset.
Inspecting the definition of the category $\bTheta_n$, this poset $\bTheta^{\sf cls}_{n/T}$ is, in fact, finite.  
Therefore, each colimit in $\bTheta_n^{\sf cls}$ can be expressed as a finite iteration of pushouts.  
It follows that a presheaf $\cF\in \PShv(\bTheta_n)$ is \emph{Segal} if and only if it carries (the opposites of) pushout diagrams in $\bTheta_n^{\sf cls}$ to pullback diagrams among spaces.
We make implicit use of this reduction as we proceed.
\end{remark}

\begin{lemma}\label{t4}
The adjunction~(\ref{e10}) restricts as an adjunction
\begin{equation}\label{e12}
(\ref{e8})\colon 
\Shv(\bTheta_n)  
~\rightleftarrows~
\Gamma\bigl(\Shv(\bTheta_\bullet)\bigr)  \colon \sfN   ~.
\end{equation}

\end{lemma}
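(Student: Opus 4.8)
The plan is to verify that each of the two functors in the adjunction~(\ref{e10}) preserves the relevant full $\infty$-subcategory, for then the adjunction restricts automatically: given an adjunction $L\dashv R$ between $\infty$-categories $\cA,\cB$ and full subcategories $\cA'\subseteq \cA$, $\cB'\subseteq \cB$ with $L(\cA')\subseteq \cB'$ and $R(\cB')\subseteq \cA'$, fullness produces the restricted adjunction $\cA'\rightleftarrows \cB'$. Concretely, I must show (A) that the left adjoint~(\ref{e8}) carries $\Shv(\bTheta_n)$ into $\Gamma\bigl(\Shv(\bTheta_\bullet)\bigr)$, and (B) that the right adjoint $\sfN$ carries $\Gamma\bigl(\Shv(\bTheta_\bullet)\bigr)$ into $\Shv(\bTheta_n)$.

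Point (A) is immediate. The left adjoint~(\ref{e8}) sends a presheaf $\cF$ to the Cartesian section $i\mapsto \iota^\ast\cF = \cF_{|\bTheta_i^{\op}}$, whose structure maps are Cartesian. By Lemma~\ref{t6}, for each $0\leq i\leq n$ the Cartesian monodromy $\iota^\ast$ carries $\Shv(\bTheta_n)$ into $\Shv(\bTheta_i)$; hence if $\cF$ is Segal, every component of its associated section is a Segal sheaf, so the section lies in $\Gamma\bigl(\Shv(\bTheta_\bullet)\bigr)$.

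Point (B) is the content. Fix $\cF_\bullet\in \Gamma\bigl(\Shv(\bTheta_\bullet)\bigr)$; I must show $\sfN(\cF_\bullet)$ is Segal. Recall $\sfN(\cF_\bullet)(T)=\Map_\Gamma(T_{\leq\bullet},\cF_\bullet)$, and express this mapping space between sections of the (co)Cartesian fibration~(\ref{e7}) as a limit over the twisted-arrow category $\TwAr([n])$ of the mapping spaces $\Map_{\PShv(\bTheta_i)}\bigl(\iota^\ast T,\iota^\ast \cF_j\bigr)$ indexed by $(i\leq j)$, where $\iota^\ast\cF_j$ denotes the restriction of $\cF_j$ to $\bTheta_i$ — a Segal sheaf on $\bTheta_i$ by Lemma~\ref{t6}. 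For a Segal cover $\cJ^{\tr}\to\bTheta_n^{\sf cls}$ with cone-point $T$, let $s\colon\colim_{\alpha\in\cJ}T_\alpha\to T$ be the associated map out of the colimit of representables in $\PShv(\bTheta_n)$; a presheaf is Segal exactly when it is local against all such $s$. Since $\iota^\ast$ preserves colimits (Remark~\ref{r1}), one has $\colim_\alpha\iota^\ast T_\alpha\simeq\iota^\ast\bigl(\colim_\alpha T_\alpha\bigr)$ with comparison map $\iota^\ast s$. Commuting the limit over $\TwAr([n])$ past the limit over $\cJ$, the Segal condition on $\sfN(\cF_\bullet)$ reduces to the assertion that for each such $s$ the restriction $\iota^\ast s$ is a Segal-local equivalence in $\PShv(\bTheta_i)$, tested against the Segal sheaves $\iota^\ast\cF_j$.

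This last assertion — that $\iota^\ast\colon\PShv(\bTheta_n)\to\PShv(\bTheta_i)$ carries each Segal map $s$ to a Segal-local equivalence — is the main obstacle, and I would settle it by an adjunction identity. Because $\iota^\ast$ preserves colimits (Remark~\ref{r1}) and the categories are presentable, $\iota^\ast$ admits a right adjoint $\iota_\ast$; by the adjunction $\iota^\ast\dashv\iota_\ast$, the map $\iota^\ast s$ is a Segal-local equivalence for every Segal cover if and only if $\iota_\ast$ carries Segal sheaves on $\bTheta_i$ to Segal sheaves on $\bTheta_n$. Now the fully faithful functor $\iota\colon\bTheta_i\hookrightarrow\bTheta_n$ admits a left adjoint $\lambda$ (Observation~\ref{maximals}); consequently the pointwise formula for the right Kan extension $\iota_\ast$ collapses, since the indexing comma category for each value acquires an initial object furnished by the unit of $\lambda\dashv\iota$, yielding a natural identification $\iota_\ast\simeq\lambda^\ast$ with restriction along $\lambda$. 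Finally, $\lambda$ preserves Segal covers (Observation~\ref{maximals}), so for any Segal sheaf $\cH$ on $\bTheta_i$ the presheaf $\lambda^\ast\cH=\cH\circ\lambda^{\op}$ carries Segal covers in $\bTheta_n$ to limits and is therefore a Segal sheaf. This proves the assertion, completes point (B), and hence establishes the restricted adjunction~(\ref{e12}).
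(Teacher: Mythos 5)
Your overall architecture agrees with the paper's: one checks that each adjoint preserves the relevant full subcategory, point (A) follows at once from Lemma~\ref{t6}, and all of the difficulty sits in point (B), which you correctly reduce (via the end formula for mapping spaces of sections and commuting limits) to the assertion that, for every Segal cover with comparison map $s\colon \colim_\alpha T_\alpha \to T$ in $\PShv(\bTheta_n)$, the restriction $\iota^\ast s$ is a Segal-local equivalence in $\PShv(\bTheta_i)$ --- equivalently, that the right adjoint $\iota_\ast$ of $\iota^\ast$ carries Segal sheaves on $\bTheta_i$ to Segal sheaves on $\bTheta_n$. This is the same pivot as in the paper, where it appears as the claim that the truncations $(T_\bullet)_{\leq i}$ of a Segal cover remain colimit diagrams.

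Your proof of that pivotal claim, however, is incorrect: the identification $\iota_\ast \simeq \lambda^\ast$ fails. Since $\lambda \dashv \iota$, the functor $\lambda^\ast$ carries the representable presheaf $S\in\bTheta_i$ to the presheaf $T\mapsto \bTheta_i(\lambda T, S)\simeq \bTheta_n(T,\iota S)$, i.e.\ to the representable $\iota S$, and it preserves colimits; hence $\lambda^\ast \simeq \iota_!$ is the \emph{left} adjoint of $\iota^\ast$, not its right adjoint. Correspondingly, the comma category computing $(\iota_\ast\cH)(T)$ pointwise has objects the pairs $(S,\ \iota S\to T)$, and the unit $T\to \iota\lambda T$ of $\lambda\dashv\iota$ points the wrong way to furnish an object of it, let alone an initial one; what would collapse this limit is a \emph{right} adjoint to $\iota$, which does not exist (already $c_0=[0]$ is terminal, not initial, in $\bDelta$). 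Concretely, for $\iota\colon \bTheta_0\hookrightarrow \bDelta$ the functor $\iota^\ast$ is evaluation at $[0]$, and its right adjoint sends a space $K$ to the simplicial space $[p]\mapsto K^{p+1}$ (the Cech nerve of $K\to\ast$), whereas $\lambda^\ast K$ is the constant simplicial space at $K$. The assertion you need is true, but it is exactly where the work lies: the paper establishes it by reducing a Segal cover to the finite poset $\cE(T)_{\leq i}$ of cells, using the active--closed factorization system on $\bTheta_i$ to show the closed slice is final in $\bTheta_{i/S}$, and applying Quillen's Theorem~A. None of that content survives your shortcut, so the proof as written does not go through.
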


\begin{proof}
Because the asserted restriction is to fully faithful $\infty$-subcategories in the adjunction~(\ref{e10}), we need only show that the left and the right adjoint functors restrict as desired.

From its definition, Lemma~\ref{t6} gives that the functor~(\ref{e8}) restricts as a functor
\[
(\ref{e8})\colon 
\Shv(\bTheta_n)  
~\rightleftarrows~
\Gamma\bigl(\Shv(\bTheta_\bullet)\bigr) ~,
\]
which is necessarily fully faithful. 

It remains to prove that $\sfN$ restricts likewise. 
Let $\cF_\bullet\in \Gamma\bigl(\Shv(\bTheta_\bullet)\bigr)$.  
We must show, then, that the presheaf $\sfN(\cF_\bullet)\colon \bTheta_n^{\op} \to \Spaces$ carries (opposites) of Segal covers to limit diagrams.  
By definition of $\sfN$ as a restricted Yoneda functor, this is implied by the functor $\bTheta_n \to \fCat_n$ carrying Segal covers to colimit diagrams.  
By definition of a Segal cover, this is implied by the functor $\bTheta_n^{\sf cls} \to \fCat_n$ preserving colimit diagrams.
By definition of $\fCat_n$ as a full $\infty$-subcategory of $\Fun([n],\Cat_n)$, this is implied by each of the forgetful functors $(-)_{\leq i}\colon \bTheta_n^{\sf cls} \to \Cat_n \xra{\rm forget}\Cat_i$ preserving colimit diagrams.

Now let $T_\bullet \colon \cJ^{\tr} \to \bTheta_n^{\cls}$ be a colimit diagram.
Denote the value on the cone point as $T:=T_\ast \in \bTheta_n$.
We must show that the composite functor $(T_\bullet)_{\leq i}\colon \cJ^{\tr} \to \Cat_i$ is a colimit diagram.  
In general, for each $k$, the full subcategory of $\bTheta^{\cls}_k$ consisting of the cells strongly generates; also, by definition, the functor $\bTheta_k \hookrightarrow \Cat_k$ preserves such colimit diagrams.
We can therefore reduced to the case that the functor $\cJ \simeq \cE(T) \hookrightarrow \bTheta^{\cls}_{n/T}$ is the inclusion of the full subcategory $\cE(T)\subset \bTheta^{\cls}_{n/T}$ consisting of those closed morphisms $T_j\to T$ for which $T_j \simeq c_k$ is a $k$-cell for some $0\leq k\leq n$.
This, a priori, $\infty$-category $\cE(T)$ is in fact a finite poset (see Remark~\ref{r8}).
The lemma is proved once we establish the following sequence of equivalences among $(\infty,i)$-categories:
\begin{eqnarray}
\nonumber
\underset{C\in \cE(T)}\colim  C_{\leq i}
&
\xla{~\simeq~}
&
\nonumber
\underset{C \in \cE(T)}    \colim     ~ \underset{C' \in \cE(C)_{\leq i}}   \colim   C'
\\
\nonumber
&
\xla{~\simeq~}
&
\nonumber
\underset{C' \in \cE(T)_{\leq i} } \colim   C'
\\
\nonumber
&
\xra{~\simeq~}
&
\nonumber
T_{\leq i} ~.
\end{eqnarray}

From the universal property of the right adjoint functor $(-)_{\leq i}\colon \Cat_n \to \Cat_i$, for each $S\in \bTheta_n$, there is a canonical identification as a colimit:
\[
S_{\leq i}~\simeq~\colim\bigl(\bTheta_{i/S} \to \bTheta_i \hookrightarrow \Cat_i \bigr)~.
\]
Consider the subcategory $\bTheta^{\cls}_{i/S}\subset \bTheta_{i/S}$ consisting of the closed morphisms to $S$ and closed morphisms among them.
Via the active-closed factorization system on the category $\bTheta_i$, the inclusion of this subcategory is a final functor.  
Consider the full subcategory $\cE(S)_{\leq i}\subset  \cE(S) \subset  \bTheta^{\cls}_{i/S}$ consisting of those $(C\to S)$ for which $C \simeq c_k$ is a $k$-cell for some $0\leq k\leq i$.
Now, for each $k$, the full subcategory of $\bTheta^{\cls}_k$ consisting of the cells strongly generates; also, by definition, the functor $\bTheta_k \hookrightarrow \Cat_k$ preserves such colimit diagrams.
We conclude from these observations an identification
\[
S_{\leq i}~\simeq~\colim\bigl( \cE(S)_{\leq i} \hookrightarrow \bTheta^{\cls}_{i/S} \hookrightarrow \bTheta_{i/S} \to \bTheta_i \hookrightarrow  \Cat_i\bigr)~.
\]
Applying this to $S=T$ gives the final equivalence in the above string.  
Notice that the assignments $S\mapsto \cE(S)$ and $S\mapsto \cE(S)_{\leq i}$ each evidently extend as a functors $\bTheta^{\cls}_n\to {\sf Poset}_{/\bTheta^{\cls}_n}$.
This gives the first of the equivalences.
The second equivalence follows from Quillen's Theorem A, using the following observation.
\begin{itemize}
\item[]
Let $C' \to T$ be a closed morphism in $\bTheta_n$ from an $i$-cell.
Then the poset of factorizations of this closed morphism through a closed morphism $C \to T$ from a cell, has an initial object.  

\end{itemize}
Finally, the composition of this string of equivalence is evidently the canonical morphism we intended to show is an equivalence.  
\end{proof}

Concatenating Corollary~\ref{t5} and Lemma~\ref{t4} results in the composite adjunction
\begin{equation}\label{e14}
({-}_{|\bTheta_\bullet^{\op}})^{\widehat{~}}_{\sf unv}\colon 
\Shv(\bTheta_n)  
~ \rightleftarrows ~
\Gamma\bigl(\Shv(\bTheta_\bullet)\bigr)  
~ \rightleftarrows ~
\Gamma\bigl(\Shv^{\sf unv}(\bTheta_\bullet)\bigr)  \colon \sfN  ~.
\end{equation}

Unwinding definitions reveals the next observation.
\begin{observation}\label{t8}
There is a canonical fully faithful functor between $\infty$-categories:
\begin{equation}\label{e15}
\fCat_n  
~\hookrightarrow~
\Gamma\bigl(\Shv^{\sf unv}(\bTheta_\bullet)\bigr)    ~.
\end{equation}
The image consists of those sections $\cC_\bullet = (\cC_0 \to \cC_1 \to \dots \to \cC_n)$ for which, for each $0\leq i\leq j\leq n$, the functor $\cC_i \to \cC_j$ between $(\infty,j)$-categories is $i$-connective.  

\end{observation}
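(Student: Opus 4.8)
The plan is to make the section $\infty$-category $\Gamma\bigl(\Shv^{\sf unv}(\bTheta_\bullet)\bigr)$ explicit and then compare it with $\Fun([n],\Cat_n)$, inside which $\fCat_n$ sits by Definition~\ref{def.flagged.n.cat}. Using the equivalences $\Shv^{\sf unv}(\bTheta_i)\simeq \Cat_i$ of Observation~\ref{t23}, together with Lemma~\ref{t6} and Remark~\ref{r1}, I would first record that the (Cartesian and coCartesian) fibration $\Shv^{\sf unv}(\bTheta_\bullet)\to[n]$ is classified by the functor $[n]\to\Cat$, $i\mapsto \Cat_i$, whose value on each edge $(i-1\leq i)$ is the coCartesian monodromy $\iota_!$. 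By Observation~\ref{maximals}, this monodromy is precisely the fully faithful inclusion $\Cat_{i-1}\hookrightarrow\Cat_i$. Consequently $\Gamma\bigl(\Shv^{\sf unv}(\bTheta_\bullet)\bigr)$ is the $\infty$-category of sections of this coCartesian fibration: an object is a sequence $\cC_0\to\cdots\to\cC_n$ in which $\cC_i$ is an $(\infty,i)$-category and each $\cC_{i-1}\to\cC_i$ is a functor in $\Cat_i$.

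Next I would construct the comparison functor. The inclusions $\Cat_i\hookrightarrow\Cat_n$ are compatible with the transition functors $\iota_!$, since both composites $\Cat_{i-1}\hookrightarrow\Cat_i\hookrightarrow\Cat_n$ and $\Cat_{i-1}\hookrightarrow\Cat_n$ agree; so they assemble into a natural transformation from the classifying functor $i\mapsto\Cat_i$ to the constant functor at $\Cat_n$. Passing to sections yields a functor
\[
G\colon \Gamma\bigl(\Shv^{\sf unv}(\bTheta_\bullet)\bigr) \longrightarrow \Fun\bigl([n],\Cat_n\bigr),
\]
which sends a section $\cC_\bullet$ to the same sequence, now regarded in $\Cat_n$. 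I would prove $G$ is fully faithful by the twisted-arrow formula for mapping spaces. For sections of a coCartesian fibration over $[n]$ and for $\Fun([n],\Cat_n)$ alike, the mapping space between two objects is a limit over $\twar([n])$, whose objects are the pairs $i\leq j$. For the source the value at $(i\leq j)$ is $\Map_{\Cat_j}(\iota_!\cC_i,\cD_j)$, while for the target it is $\Map_{\Cat_n}(\cC_i,\cD_j)$ with both objects pushed into $\Cat_n$. Since each $\Cat_j\hookrightarrow\Cat_n$ is fully faithful, these two $\twar([n])$-indexed diagrams are termwise equivalent, hence so are their limits, and $G$ induces equivalences on mapping spaces.

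Finally I would identify the essential image of $G$ and restrict. Because $\Cat_i\hookrightarrow\Cat_n$ is fully faithful with essential image the $(\infty,i)$-categories, the essential image of $G$ consists of exactly those sequences $\cC_0\to\cdots\to\cC_n$ in $\Fun([n],\Cat_n)$ for which each $\cC_i$ is an $(\infty,i)$-category (the structure maps, being maps in $\Cat_n$ between such objects, then automatically lie in the appropriate $\Cat_i$). By Definition~\ref{def.flagged.n.cat}, $\fCat_n$ is the full subcategory of these sequences that additionally satisfy $i$-connectivity of each $\cC_i\to\cC_j$. As $G$ is fully faithful and carries its source equivalently onto this essential image, the inverse equivalence restricts to the sought canonical fully faithful functor $\fCat_n\hookrightarrow\Gamma\bigl(\Shv^{\sf unv}(\bTheta_\bullet)\bigr)$; and because the connectivity condition is a condition on the underlying sequence $\cC_\bullet$, which $G$ preserves, the image of this functor is precisely the set of sections for which $\cC_i\to\cC_j$ is $i$-connective for all $0\leq i\leq j\leq n$.

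The main obstacle will be supplying the twisted-arrow mapping-space computation with enough coherence — verifying that the comparison of the two $\twar([n])$-indexed diagrams is natural in the sections, not merely termwise — and, upstream of that, pinning down the identification of the coCartesian monodromy of $\Shv^{\sf unv}(\bTheta_\bullet)$ with the inclusions $\Cat_{i-1}\hookrightarrow\Cat_i$ as an equivalence of functors $[n]\to\Cat$, rather than objectwise. Everything else is a routine manipulation of full faithfulness and of sections of coCartesian fibrations.
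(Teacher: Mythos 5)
Your proposal is correct, and it is essentially the argument the paper has in mind: Observation~\ref{t8} is stated with no proof beyond ``unwinding definitions,'' and your elaboration --- identifying the coCartesian fibration $\Shv^{\sf unv}(\bTheta_\bullet)\to[n]$ with the one classified by $i\mapsto\Cat_i$ with transition functors the fully faithful inclusions of Observation~\ref{maximals}, comparing sections against $\Fun([n],\Cat_n)$, and cutting down to the connectivity condition of Definition~\ref{def.flagged.n.cat} --- is exactly that unwinding carried out carefully. On the one obstacle you flag (coherence of the termwise comparison of $\TwAr([n])$-diagrams): you can sidestep it entirely by observing that the componentwise fully faithful natural transformation from $i\mapsto\Cat_i$ to the constant functor at $\Cat_n$ unstraightens to a map of coCartesian fibrations $\Shv^{\sf unv}(\bTheta_\bullet)\to[n]\times\Cat_n$ over $[n]$ which is fiberwise fully faithful, hence fully faithful on total $\infty$-categories, hence induces a fully faithful functor $\Fun_{/[n]}\bigl([n],\Shv^{\sf unv}(\bTheta_\bullet)\bigr)\to\Fun_{/[n]}\bigl([n],[n]\times\Cat_n\bigr)\simeq\Fun([n],\Cat_n)$; this gives your functor $G$ and its full faithfulness in one step, after which your identification of the essential image and the restriction to $\fCat_n$ go through as written.
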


The proof of the next result occupies~\S\ref{sec.in.fCat}.
\begin{lemma}\label{t9}
The value of the left adjoint of the adjunction~(\ref{e14}) on a Segal sheaf $\cF\in \Shv(\bTheta_n)$ lies in the image of the fully faithful functor of Observation~\ref{t8}:
\[
(\cF_{|\bTheta_\bullet^{\op}})^{\widehat{~}}_{\sf unv}~\in~\fCat_n~.  
\]
\end{lemma}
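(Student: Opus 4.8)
The goal is to show that applying the left adjoint of~(\ref{e14}) to a Segal sheaf $\cF$ produces a section $(\cF_{|\bTheta_\bullet^{\op}})^{\widehat{~}}_{\sf unv}$ lying in the image of $\fCat_n \hookrightarrow \Gamma\bigl(\Shv^{\sf unv}(\bTheta_\bullet)\bigr)$. By Observation~\ref{t8}, this image is characterized by a single condition: writing the section as $(\cC_0 \to \cdots \to \cC_n)$ with $\cC_i := (\cF_{|\bTheta_i^{\op}})^{\widehat{~}}_{\sf unv}$, I must verify that for each $0 \leq i \leq j \leq n$ the functor $\cC_i \to \cC_j$ is $i$-connective. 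So the proof reduces entirely to an $i$-connectivity statement about the transition functors of the univalently-completed section.

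\textbf{The plan.} First I would pin down what the transition functor $\cC_i \to \cC_j$ actually is. By Corollary~\ref{t5} the left adjoint computes univalent-completion fiberwise, and the coCartesian structure of Lemma~\ref{t6} identifies the transition map as $\iota_!$ followed by univalent-completion; concretely $\cC_i \to \cC_j$ is the image under univalent-completion of the map $\iota_! (\cF_{|\bTheta_i^{\op}}) \to \cF_{|\bTheta_j^{\op}}$ in $\Shv(\bTheta_j)$ (the counit/restriction comparison). By Remark~\ref{r2}, $i$-connectivity is equivalent to $k$-surjectivity for all $0 \leq k \leq i$, i.e.\ surjectivity on $\pi_0$ of $\Map^{\partial c_k/}(c_k, -)$. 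Since evaluation at cells and their boundaries is exactly what detects morphism-spaces for univalent Segal sheaves, I would translate each $k$-surjectivity requirement into a statement about the maps of spaces $\cC_i(c_k) \to \cC_j(c_k)$ over $\cC_i(\partial c_k) \to \cC_j(\partial c_k)$. Crucially, since $k \leq i \leq j$, the cell $c_k$ and its boundary $\partial c_k$ lie in $\bTheta_i$, so these spaces are computed by restriction, and I expect $\cC_i(c_k) \simeq \cC_j(c_k)$ because both compute $\cF(c_k)$ (the relevant values are unchanged by $\iota_!$ and by univalent-completion on low-dimensional cells). This would give not merely surjectivity but an equivalence on $k$-morphism spaces for $k \leq i$, which is even stronger than needed.

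\textbf{Carrying out the key step.} The heart of the matter is controlling how univalent-completion affects the evaluation of a Segal sheaf on cells $c_k$ with $k \leq i$. For a univalent Segal sheaf on $\bTheta_i$, the value on $c_k$ with $k \leq i$ records the space of $k$-morphisms, and the univalence condition forces the value on $c_{k-1}\wr\sE(c_1)$ to agree with that on $c_{k-1}$. I would use the explicit formula for univalent-completion — which the introduction advertises as one of the paper's contributions — together with the compatibility of Observation~\ref{t3} (naturality of $\sfN$ as $n$ varies) to argue that $(\cF_{|\bTheta_i^{\op}})^{\widehat{~}}_{\sf unv}$ and $(\cF_{|\bTheta_j^{\op}})^{\widehat{~}}_{\sf unv}$ agree on cells $c_k$ for $k \leq i$, since univalent-completion in the higher $\bTheta_j$ only modifies the values on cells of dimension $> i$. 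The transition functor therefore induces an equivalence on $k$-morphism spaces for every $k \leq i$, hence is $i$-connective.

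\textbf{The main obstacle.} The delicate point is that univalent-completion is a nontrivial Bousfield localization, so a priori $\cC_i(c_k)$ need not equal $\cF(c_k)$ even for small $k$; I must establish that the localization does \emph{not} alter the low-dimensional cell values, and that it does so compatibly with the inclusion $\bTheta_i \hookrightarrow \bTheta_j$. I expect this to hinge on the explicit expression for univalent-completion together with the fact (from Lemma~\ref{t7}) that the univalent-completion functors commute with the transition functors $\iota_!$ up to the lax structure there, and on the observation that the univalence diagrams introduced at dimension $i$ only involve cells $c_{i-1}\wr\sE(c_1) \to c_{i-1}$ of dimension $\geq i-1$, leaving strictly lower cells untouched. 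Assembling these compatibilities into a clean proof that $\cC_i \to \cC_j$ is an equivalence on $k$-morphisms for $k \leq i$ is where the real work lies; once that is in hand, $i$-connectivity and membership in $\fCat_n$ follow immediately from Observation~\ref{t8} and Remark~\ref{r2}.
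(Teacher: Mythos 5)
Your reduction of the lemma to $i$-connectivity of the transition functors $(\cF_{|\bTheta_i^{\op}})^{\widehat{~}}_{\sf unv}\to(\cF_{|\bTheta_j^{\op}})^{\widehat{~}}_{\sf unv}$, and your instinct to attack this via the explicit formula for univalent-completion (Proposition~\ref{completion}), both match the paper. But the step you correctly flag as "the heart of the matter" is resolved by a false claim. You assert that $(\cF_{|\bTheta_i^{\op}})^{\widehat{~}}_{\sf unv}(c_k)\simeq(\cF_{|\bTheta_j^{\op}})^{\widehat{~}}_{\sf unv}(c_k)$ for $k\leq i$, on the grounds that both compute $\cF(c_k)$ and that univalent-completion over $\bTheta_j$ only modifies values on cells of dimension $>i$. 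Neither assertion holds: the univalence condition at level $\ell$ forces $\cF(c_{\ell-1})\to\cF(c_{\ell-1}\wr\sE(c_1))$ to become an equivalence, which directly alters the value on the $(\ell-1)$-cell and propagates, via the Segal condition, to all lower cells. In the formula of Proposition~\ref{completion} the two sides are colimits of $\cF(c_k\times\sE(\bullet))$ indexed over $\bTheta_i$ and over $\bTheta_j$ respectively, and these genuinely differ. A minimal counterexample: take $n=1$, $i=0$, $j=1$, and $\cF$ the nerve of the walking isomorphism $\sE(c_1)$; then $\cC_0(c_0)=\cF(c_0)$ is a two-point set while $\cC_1(c_0)$ is contractible. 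More tellingly, if your claimed equivalence held then every section in the image of the left adjoint would satisfy $\cC_i\simeq(\cC_n)_{\leq i}$, and the motivating examples of the paper (${\sf Rings}^\sim\to\Morita$, ${\sf Cat}^\sim\to\Corr$, the bordism flag) would carry no more information than their underlying $(\infty,n)$-category. The entire point of flagged categories is that these transition functors are $i$-connective \emph{without} inducing equivalences on $k$-morphism spaces.

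What the paper proves is only the surjectivity that $i$-connectivity actually demands, and by a different mechanism. Using Proposition~\ref{completion}, the map from $\bigl|\cF(c_k\times\sE_{|i}(\bullet))\bigr|$ to the pullback
$\bigl|\cF(\partial c_k\times\sE_{|i}(\bullet))\bigr|\times_{|\cF(\partial c_k\times\sE_{|j}(\bullet))|}\bigl|\cF(c_k\times\sE_{|j}(\bullet))\bigr|$
is factored as a string of six comparison maps, of which five are equivalences (using full faithfulness of $\iota_!$, commutation of geometric realization with these pullbacks, etc.) and exactly one is merely surjective: the canonical map $\bigl|\iota^\ast\cX\bigr|\to\bigl|\cX\bigr|$ comparing the colimit of a presheaf on $\bTheta_j$ restricted to $\bTheta_i$ with its full colimit, which is surjective on path components because $\iota\colon\bTheta_i\hookrightarrow\bTheta_j$ is a right adjoint. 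That surjectivity, not an equivalence on low-dimensional cells, is the correct replacement for your key step; as written, the "main obstacle" your proposal sets out to overcome cannot be overcome because the statement it asks for is false.
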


Through Observation~\ref{t8}, Lemma~\ref{t9} has the following consequence.
\begin{cor}\label{t10}
The adjunction~(\ref{e14}) restricts as an adjunction
\begin{equation}\label{e015}
({-}_{|\bTheta_\bullet^{\op}})^{\widehat{~}}_{\sf unv}\colon
\Shv(\bTheta_n)
~\rightleftarrows~
\fCat_n  \colon \sfN ~.
\end{equation}
\end{cor}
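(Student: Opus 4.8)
The plan is to invoke the formal fact that an adjunction restricts to any full $\infty$-subcategory of its target through which the left adjoint factors, taking the restriction of the original right adjoint as the new right adjoint. Concretely, the adjunction~(\ref{e14}) has left adjoint $L := ({-}_{|\bTheta_\bullet^{\op}})^{\widehat{~}}_{\sf unv}$ and right adjoint $\sfN$, with target $\Gamma\bigl(\Shv^{\sf unv}(\bTheta_\bullet)\bigr)$. Observation~\ref{t8} exhibits $\fCat_n$ as a full $\infty$-subcategory of this target via the fully faithful functor~(\ref{e15}), while Lemma~\ref{t9} guarantees that $L$ carries every Segal sheaf into this subcategory. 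Hence $L$ factors as $\Shv(\bTheta_n) \xra{L'} \fCat_n \hookrightarrow \Gamma\bigl(\Shv^{\sf unv}(\bTheta_\bullet)\bigr)$.

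Next I would set $R' := \sfN|_{\fCat_n}$ and check $L' \dashv R'$ through the mapping-space criterion. For $\cF \in \Shv(\bTheta_n)$ and $\un\cC \in \fCat_n$, fullness of~(\ref{e15}) identifies $\Map_{\fCat_n}(L'\cF, \un\cC)$ with $\Map_{\Gamma(\Shv^{\sf unv}(\bTheta_\bullet))}(L\cF, \un\cC)$, which the adjunction~(\ref{e14}) in turn identifies with $\Map_{\Shv(\bTheta_n)}(\cF, \sfN\un\cC) = \Map_{\Shv(\bTheta_n)}(\cF, R'\un\cC)$. These identifications are natural in both variables, so they assemble into the asserted adjunction~(\ref{e015}).

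I expect no genuine obstacle: the substantive content lives entirely in the preceding Observation~\ref{t8} (fullness) and Lemma~\ref{t9} (that $L$ lands in $\fCat_n$), and what remains is the formal restriction just described. The only point to flag is bookkeeping of notation. Since~(\ref{e15}) is fully faithful, the restricted right adjoint $R'$ is simply $\sfN$ viewed on $\fCat_n$, and this coincides with the restricted Yoneda functor $\sfN$ of~(\ref{0}) under the chain of fully faithful inclusions $\fCat_n \hookrightarrow \Gamma\bigl(\Shv^{\sf unv}(\bTheta_\bullet)\bigr) \hookrightarrow \Gamma\bigl(\PShv(\bTheta_\bullet)\bigr)$, so the symbol $\sfN$ appearing in~(\ref{e015}) is consistent with its earlier meaning.
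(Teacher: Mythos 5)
Your proposal is correct and matches the paper's (implicit) argument: the paper derives Corollary~\ref{t10} directly from Observation~\ref{t8} and Lemma~\ref{t9} by exactly the formal restriction you describe, with no further content needed. The mapping-space verification you spell out is the standard justification the paper leaves tacit.
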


\subsection{Explicating the adjunction}
After Corollary~\ref{t10}, our main result (Theorem~\ref{main.result.1}) is implied by showing that both the unit and the counit transformations of the adjunction~(\ref{e015}) are equivalences.  
So we explicate the values of left and right adjoints, as well as the unit and the counit, of the adjunction~(\ref{e015}).

\subsubsection{\bf The left adjoint}
The value of the left adjoint $(-_{|\bTheta_\bullet^{\op}})^{\widehat{~}}_{\sf unv}$ of the adjunction~(\ref{e14}) on a Segal sheaf $\cF\in \Shv(\bTheta_n)$ is the section $(\cF_{|\bTheta_\bullet^{\op}})^{\widehat{~}}_{\sf unv}$ of the functor $\Shv^{\sf unv}(\bTheta_\bullet) \to [n]$ that is the assignment
\[
[n]\ni i
\mapsto
(\cF_{|\bTheta_i^{\op}})^{\widehat{~}}_{\sf unv}\in \Shv^{\sf unv}(\bTheta_i)~,
\]
which is the univalent completion of the restriction $\cF_{|\bTheta_i^{\op}}\in \Shv(\bTheta_i)$.

\subsubsection{\bf The right adjoint}\label{sec.right.adjoint}
The value of the right adjoint $\sfN$ of the adjunction~(\ref{e14}) on a section $\cC_\bullet \in \Gamma\bigl(\Shv^{\sf unv}(\bTheta_\bullet)\bigr)$ is the Segal sheaf $\sfN(\cC_\bullet)\in \Shv(\bTheta_n)$ that is the assignment
\[
\bTheta_n^{\op} \ni T  
\mapsto 
\Map_{\Gamma_{\leq n}}(T_{\leq \bullet} , \cC_\bullet) \in \Spaces~,
\]
which is the space of morphisms in $\Gamma\bigl(\Shv^{\sf unv}(\bTheta_\bullet)\bigr)$ from $T_{\leq \bullet} = (T_{\leq 0} \to T_{\leq 1}\to \dots \to T_{\leq n-1} \to T)$ to $\cC_\bullet$.  
The next result (Corollary~\ref{t1}) makes the values of the Segal sheaf $\sfN(\cC_\bullet)$ more explicit.

\begin{observation}\label{mapping.spaces}
For $\cC_\bullet$ and $\cD_\bullet$ sections of the functor $\PShv(\bTheta_\bullet) \to [n]$, the canonical square among spaces of morphisms
\[
\xymatrix{
\Map_{\Gamma_{\leq n}}(\cC_\bullet,\cD_\bullet)  \ar[rr]  \ar[d]
&&
\Map_{\PShv(\bTheta_n)}(\cC_n,\cD_n)  \ar[d]
\\
\Map_{\Gamma_{<n}}(\cC_{\bullet <  n} , \cD_{\bullet < n})  \ar[rr]
&&
\Map_{\PShv(\bTheta_n)}(\cC_{n-1},\cD_n)
}
\]
is a pullback.

\end{observation}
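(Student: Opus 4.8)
The plan is to identify $\Gamma_{\leq n}$ as a fiber product of section $\infty$-categories arising from the decomposition of $[n]$ by its final edge, and then to read off the mapping space from the fact that mapping spaces in a limit of $\infty$-categories are the limit of mapping spaces. Recall from Remark~\ref{r1} that, viewed as a coCartesian fibration, $\PShv(\bTheta_\bullet)\to[n]$ is classified by $i\mapsto\PShv(\bTheta_i)$ with transition functors $\iota_!$; accordingly a morphism of sections $\cC_\bullet\to\cD_\bullet$ is the data of componentwise morphisms $\cC_i\to\cD_i$ together with a coherence square over each edge $\{i-1<i\}$ relating them through the structure maps $\iota_!\cC_{i-1}\to\cC_i$ and $\iota_!\cD_{i-1}\to\cD_i$.

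First I would record that, in $\Cat_\infty$, the linear order $[n]$ is the pushout $[n-1]\amalg_{[0]}[1]$, where $[0]$ is the final vertex $\{n-1\}$ of $[n-1]$ and $[1]=\{n-1<n\}$ is the final edge; this holds because $[n]$ is the free category on the linear quiver $0\to\cdots\to n$, which is the wedge of the quivers $0\to\cdots\to(n-1)$ and $(n-1)\to n$ along their shared vertex. Applying the descent property of sections --- for a fixed functor $\mathcal{E}\to\mathcal{B}$, the assignment $\mathcal{B}\mapsto\Fun_{/\mathcal{B}}(\mathcal{B},\mathcal{E}|_{\mathcal{B}})$ carries colimits of base $\infty$-categories to limits of $\infty$-categories of sections, since sections over $\mathcal{B}_\alpha$ are lifts of the inclusion $\mathcal{B}_\alpha\hookrightarrow\mathcal{B}$ and fibers commute with limits --- to this pushout yields
\[
\Gamma_{\leq n}~\simeq~\Gamma_{<n}\underset{\PShv(\bTheta_{n-1})}\times\Gamma_{\{n-1,n\}}~,
\]
where $\Gamma_{\{n-1,n\}}$ is the $\infty$-category of sections over the final edge and both functors to $\PShv(\bTheta_{n-1})$ are evaluation at the vertex $n-1$.

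Next I would compute the remaining pieces. Mapping spaces in the fiber product above are the fiber product of mapping spaces, so $\Map_{\Gamma_{\leq n}}(\cC_\bullet,\cD_\bullet)$ is the iterated pullback of $\Map_{\Gamma_{<n}}(\cC_{\bullet<n},\cD_{\bullet<n})$ and $\Map_{\Gamma_{\{n-1,n\}}}$ over $\Map_{\PShv(\bTheta_{n-1})}(\cC_{n-1},\cD_{n-1})$. For the single edge, the standard description of morphisms in the total $\infty$-category of a coCartesian fibration over $[1]$ classified by $\iota_!$ gives
\[
\Map_{\Gamma_{\{n-1,n\}}}\bigl((\iota_!\cC_{n-1}\to\cC_n),(\iota_!\cD_{n-1}\to\cD_n)\bigr)~\simeq~\Map_{\PShv(\bTheta_{n-1})}(\cC_{n-1},\cD_{n-1})\underset{\Map_{\PShv(\bTheta_n)}(\iota_!\cC_{n-1},\cD_n)}\times\Map_{\PShv(\bTheta_n)}(\cC_n,\cD_n)~,
\]
with left leg obtained by applying $\iota_!$ and postcomposing with $\iota_!\cD_{n-1}\to\cD_n$, and right leg by precomposing with $\iota_!\cC_{n-1}\to\cC_n$; the adjunction $\iota_!\dashv\iota^\ast$ identifies $\Map_{\PShv(\bTheta_n)}(\iota_!\cC_{n-1},\cD_n)$ with the corner $\Map_{\PShv(\bTheta_n)}(\cC_{n-1},\cD_n)$ named in the statement. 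Substituting this into the iterated pullback and cancelling the repeated factor $\Map_{\PShv(\bTheta_{n-1})}(\cC_{n-1},\cD_{n-1})$ by pasting of pullback squares produces exactly the asserted square.

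The genuinely structural inputs --- the pushout decomposition of $[n]$, descent of sections along it, and the single-edge mapping-space formula --- are all standard. I therefore expect the only real care to lie in the bookkeeping of the final pasting step: verifying that the two maps into $\Map_{\PShv(\bTheta_n)}(\cC_{n-1},\cD_n)$ are precisely precomposition with the structure map $\iota_!\cC_{n-1}\to\cC_n$ (coming from the top edge) and, after applying $\iota_!$, postcomposition with $\iota_!\cD_{n-1}\to\cD_n$ (coming from the bottom section). This identification of the two legs, rather than any conceptual difficulty, is the main obstacle.
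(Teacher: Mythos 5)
Your argument is correct, and it shares the paper's essential combinatorial input -- the equivalence $[n-1]\amalg_{[0]}[1]\xra{\simeq}[n]$ and the resulting pasting of pullbacks -- but the mechanism by which you compute mapping spaces of sections is genuinely different. The paper first observes that the coCartesian monodromy functors $\iota_!$ are fully faithful (being left Kan extensions along the fully faithful inclusions $\bTheta_i\hookrightarrow\bTheta_j$), hence that $\Gamma\bigl(\PShv(\bTheta_\bullet)\bigr)$ embeds fully faithfully into the plain functor category $\Fun\bigl([n],\PShv(\bTheta_n)\bigr)$; the pullback square is then the elementary mapping-space decomposition in a functor category, with no descent for sections and no single-edge formula needed. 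You instead decompose the section $\infty$-category itself as $\Gamma_{<n}\underset{\PShv(\bTheta_{n-1})}\times\Gamma_{\{n-1<n\}}$ and invoke the standard formula for mapping spaces in the lax limit of a coCartesian fibration over $[1]$. Your route is more general -- it makes no use of the full faithfulness of the transition functors and would apply to sections of an arbitrary coCartesian fibration over $[n]$ -- at the cost of one extra pasting step and the leg-identification bookkeeping you flag at the end (which you resolve correctly; note only that no adjunction is really needed to match the corner $\Map_{\PShv(\bTheta_n)}(\cC_{n-1},\cD_n)$ with $\Map_{\PShv(\bTheta_n)}(\iota_!\cC_{n-1},\cD_n)$, since the statement's notation already presupposes pushing $\cC_{n-1}$ forward to $\PShv(\bTheta_n)$). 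The paper's approach buys a shorter proof by exploiting a special feature of this particular fibration; yours buys robustness.
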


\begin{proof}
The coCartesian monodromy functors of the coCartesian fibration $\PShv(\bTheta_\bullet)\to [n]$ are given by left Kan extensions along fully faithful functors.
Therefore, these coCartesian monodromy functors are fully faithful.  
These coCartesian monodromy functors thusly define a fully faithful functor to the fiber over the final object $n\in [n]$:
\[
\Gamma\bigl(\PShv(\bTheta_\bullet)\bigr)
~\hookrightarrow~
\Fun\bigl([n] , \PShv(\bTheta_n)\bigr) =: \PShv(\bTheta_n)^{[n]}~.
\]

The canonical functor $[n-1]\underset{\{n-1\}}\amalg \{n-1 < n\} \to [n]$ between $\infty$-categories is an equivalence from the pushout.  
Consequently, for $\cX$ an $\infty$-category, and for $x_\bullet , y_\bullet \in \Fun([n],\cX)=: \cX^{[n]}$ two functors $[n]\to \cX$, the canonical square among spaces of morphisms
\[
\xymatrix{
\cX^{[n]}(x_{\bullet},y_{\bullet})  \ar[r]  \ar[d]
&
\cX(x_n,y_n)  \ar[d]
\\
\cX^{[n-1]}(x_{\bullet<n} ,y_{\bullet<n})  \ar[r]
&
\cX(x_{n-1} , y_n)
}
\]
is a pullback.
Apply this to the case $\cX = \PShv(\bTheta_n)$.
\end{proof}

\begin{cor}\label{t1}
Let $\cC_\bullet$ be a section of the functor $\PShv(\bTheta_\bullet) \to [n]$.
Let $T\in \bTheta_n$.
There is a canonical pullback diagram among spaces:
\[
\xymatrix{
\sfN(\cC_\bullet)(T)  \ar[rr]  \ar[d]
&&
\cC_n(T)  \ar[d]
\\
\sfN(\cC_{\bullet<n})(T_{<n})  \ar[rr]
&&
\cC_n(T_{<n})~.
}
\]
Alternatively, there is a canonical limit diagram among spaces:
\[
\xymatrix{
&&
\sfN(\cC_\bullet)(T)  \ar[dll]  \ar[dl]    \ar[d]  \ar[dr]  \ar[drr]
&&
\\
\cC_0(T_{\leq 0})  \ar[dr]
&
\cC_1(T_{\leq 1})   \ar[dr]  \ar[d]
&
\cdots  \ar[dr]  \ar[d]
&
\cC_{n-1}(T_{\leq n-1})  \ar[dr]  \ar[d]
&
\cC_n(T)      \ar[d]
\\
&
\cC_1(T_{\leq 0})  
&
\cC_2(T_{\leq 1})
&
\cC_{n-1}(T_{\leq n-2})
&
\cC_n(T_{\leq n-1})
.
}
\]
\end{cor}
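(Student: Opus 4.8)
The plan is to deduce both assertions from Observation~\ref{mapping.spaces}, the first directly and the second by an induction on $n$ that splices the pullback squares together.

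For the first pullback square, I would apply Observation~\ref{mapping.spaces} with the roles $\cC_\bullet \rightsquigarrow T_{\leq\bullet}$ and $\cD_\bullet \rightsquigarrow \cC_\bullet$, where $T_{\leq\bullet} = (T_{\leq 0}\to\dots\to T_{\leq n}=T)$ is the section selected by the left adjoint of~(\ref{e10}). By the description of $\sfN$ in~\S\ref{sec.right.adjoint}, the upper-left corner $\Map_{\Gamma_{\leq n}}(T_{\leq\bullet},\cC_\bullet)$ is exactly $\sfN(\cC_\bullet)(T)$, while the upper-right corner $\Map_{\PShv(\bTheta_n)}(T,\cC_n)$ is $\cC_n(T)$ by the Yoneda lemma, using that the top component of $T_{\leq\bullet}$ is $T_{\leq n}=T$. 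For the lower-left corner I would invoke the identity $(T_{<n})_{\leq\bullet}=T_{\leq\bullet<n}$ of sections over $[n-1]$ — which holds because restriction from $\bTheta_n$ to $\bTheta_i$ factors through $\bTheta_{n-1}$ — so that $\Map_{\Gamma_{<n}}(T_{\leq\bullet<n},\cC_{\bullet<n})$ is identified with $\sfN(\cC_{\bullet<n})(T_{<n})$, where $T_{<n}:=\iota^\ast T = T_{\leq n-1}$. Finally, the lower-right corner $\Map_{\PShv(\bTheta_n)}(\iota_! T_{\leq n-1},\cC_n)$ is, by the extension convention of Notation~\ref{d5}, the value $\cC_n(T_{<n})$ of $\cC_n$ on $\iota_! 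T_{<n}$. This produces the first square.

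For the limit diagram I would induct on $n$, the case $n=0$ being the tautology $\sfN(\cC_\bullet)(T)\simeq\cC_0(T_{\leq 0})$. Since Observation~\ref{mapping.spaces} is stated for arbitrary sections and the identifications above used only formal properties of $\iota_!$, $\iota^\ast$ and $\sfN$, the first square holds verbatim with $T$ replaced by any presheaf on $\bTheta_n$. Applying it exhibits
\[
\sfN(\cC_\bullet)(T)~\simeq~\cC_n(T)\underset{\cC_n(T_{<n})}\times \sfN(\cC_{\bullet<n})(T_{<n})~.
\]
The inductive hypothesis, applied to the section $\cC_{\bullet<n}$ over $[n-1]$ and the presheaf $T_{<n}\in\PShv(\bTheta_{n-1})$, identifies $\sfN(\cC_{\bullet<n})(T_{<n})$ with the limit of the length-$(n-1)$ zigzag whose vertices are $\cC_i\bigl((T_{<n})_{\leq i}\bigr)$ and $\cC_i\bigl((T_{<n})_{\leq i-1}\bigr)$; the identity $(T_{<n})_{\leq i}=T_{\leq i}$ for $i\leq n-1$ matches this with the indices-$0$-through-$(n-1)$ portion of the asserted diagram. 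Splicing — with $\cC_n(T_{<n})=\cC_n(T_{\leq n-1})$ the rightmost bottom vertex and $\cC_n(T)=\cC_n(T_{\leq n})$ the rightmost middle vertex — and using the general fact that a limit over such a zigzag poset is computed as the iterated pullback along its edges, yields the full diagram.

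The step I expect to require the most care is the compatibility of the gluing maps: I must check that the leg $\sfN(\cC_{\bullet<n})(T_{<n})\to\cC_n(T_{<n})$ in the fiber product above agrees, under the inductive identification, with the composite of the limit projection onto $\cC_{n-1}(T_{\leq n-1})$ followed by the structure map $\cC_{n-1}(T_{\leq n-1})\to\cC_n(T_{\leq n-1})$ induced by $\iota_!\cC_{n-1}\to\cC_n$. This is a naturality statement I would settle by tracing the bottom horizontal arrow of the square in Observation~\ref{mapping.spaces} through its construction via the fully faithful embedding $\Gamma\bigl(\PShv(\bTheta_\bullet)\bigr)\hookrightarrow\Fun\bigl([n],\PShv(\bTheta_n)\bigr)$ and the pushout $[n-1]\amalg_{\{n-1\}}\{n-1<n\}\simeq[n]$. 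Once this matching of maps is established, the spliced diagram is manifestly the asserted zigzag and the remaining edges agree by the same argument.
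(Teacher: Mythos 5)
Your proposal is correct and matches the paper's (implicit) argument: the paper derives Corollary~\ref{t1} directly from Observation~\ref{mapping.spaces} by substituting the section $T_{\leq\bullet}$ for the first variable, and obtains the zigzag limit by iterating that pullback square down to $i=0$. Your added care about applying the square to the non-representable presheaf $T_{<n}$ and about matching the gluing maps is appropriate but routine, and does not constitute a departure from the paper's route.
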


Corollary~\ref{t1} makes apparent the following.
\begin{observation}\label{t14}
Let $0\leq i \leq n$.
Let $\cF_\bullet$ be a section of the functor $\PShv(\bTheta_\bullet) \to [n]$. 
Restricting this section over $[i] = \{0<\dots<i\}\subset[n]$ determines the section $\cF_{\bullet \leq i}$ of $\PShv(\bTheta_\bullet) \to [i]$.
There is a canonical equivalence between presheaves on $\bTheta_i$:
\[
\sfN(\cF_{\bullet\leq i})
~\simeq~
\sfN(\cF_\bullet)_{|\bTheta_i^{\op}}
~.
\]
\end{observation}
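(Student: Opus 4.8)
The plan is to deduce the equivalence pointwise from the pullback square of Corollary~\ref{t1} and then assemble the pointwise equivalences into an equivalence of presheaves by naturality. The essential input is an observation about representables: for $T\in\bTheta_i$ and any $i\le j\le n$, the restriction $T_{\le j}$ is the representable presheaf $\bTheta_j(-,T)$ on $\bTheta_j$, since for $S\in\bTheta_j$ one has $T_{\le j}(S)=\bTheta_n(S,T)=\bTheta_j(S,T)$ by full faithfulness of $\bTheta_j\hookrightarrow\bTheta_n$. Because the coCartesian monodromy $\iota_!$ is left Kan extension along a fully faithful functor, it carries representables to representables; hence the structure map $\iota_!T_{\le j}\to T_{\le j+1}$ of the section $T_{\le\bullet}$ is an equivalence for every $i\le j<n$. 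This is precisely the feature that will make the extra terms in the limit of Corollary~\ref{t1} redundant once $T$ lies in $\bTheta_i$.

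First I would treat the one-step case $i=n-1$. Fixing $T\in\bTheta_{n-1}$, I consider the pullback square of Corollary~\ref{t1} (with $\cC_\bullet=\cF_\bullet$),
\[
\xymatrix{
\sfN(\cF_\bullet)(T)  \ar[rr]  \ar[d] && \cF_n(T)  \ar[d] \\
\sfN(\cF_{\bullet<n})(T_{<n})  \ar[rr] && \cF_n(T_{<n}).
}
\]
By Observation~\ref{mapping.spaces}, the right vertical map is the one induced by the structure map $\iota_!T_{\le n-1}\to T_{\le n}$ of the section $T_{\le\bullet}$; by the previous paragraph this map is an equivalence, so the right vertical map is an equivalence, and therefore so is the left vertical map. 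Finally, since $T\in\bTheta_{n-1}$ makes $T_{<n}=T_{\le n-1}$ the representable $\bTheta_{n-1}(-,T)$, the Yoneda lemma identifies $\sfN(\cF_{\bullet<n})(T_{<n})\simeq\sfN(\cF_{\bullet\le n-1})(T)$. This yields the desired equivalence $\sfN(\cF_\bullet)(T)\simeq\sfN(\cF_{\bullet\le n-1})(T)$.

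These equivalences are natural in $T$, because the pullback square of Corollary~\ref{t1} is natural in $T\in\bTheta_n$; restricting to $T\in\bTheta_{n-1}$ then assembles them into an equivalence of presheaves $\sfN(\cF_\bullet)_{|\bTheta_{n-1}^{\op}}\simeq\sfN(\cF_{\bullet\le n-1})$. The general case follows by downward induction on the top index, the whole apparatus being uniform in the top level (cf.~Observation~\ref{t3}): applying Corollary~\ref{t1} with $n$ replaced by $m$ to the section $\cF_{\bullet\le m}$ gives $\sfN(\cF_{\bullet\le m})_{|\bTheta_{m-1}^{\op}}\simeq\sfN(\cF_{\bullet\le m-1})$ for each $i<m\le n$, and composing these equivalences along the tower $\bTheta_i\hookrightarrow\cdots\hookrightarrow\bTheta_n$ produces $\sfN(\cF_\bullet)_{|\bTheta_i^{\op}}\simeq\sfN(\cF_{\bullet\le i})$.

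I expect the only point requiring genuine care to be the identification of $T_{\le j}$ with a representable for $T\in\bTheta_i$, together with the resulting equivalence $\iota_!T_{\le j}\to T_{\le j+1}$: this is where full faithfulness of the inclusions $\bTheta_i\hookrightarrow\bTheta_j$ and preservation of representables under left Kan extension both enter, and it is what trivializes the right-hand map of the pullback square. Everything else is a formal consequence of that square together with naturality. One could alternatively read the statement off Observation~\ref{t3} directly, but the pullback formulation of Corollary~\ref{t1} makes the mechanism transparent.
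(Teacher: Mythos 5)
Your proposal is correct and follows the same route the paper intends: the paper simply asserts that Corollary~\ref{t1} "makes apparent" Observation~\ref{t14}, and your argument is a careful unpacking of exactly why — for $T\in\bTheta_i$ the restrictions $T_{\leq j}$ are representable, so the structure maps $\iota_!T_{\leq j}\to T_{\leq j+1}$ are equivalences and the right-hand leg of the pullback square collapses. The observation about representables and the Yoneda identification of the bottom-left corner are the only points needing care, and you have handled both correctly.
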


\subsubsection{\bf The unit}\label{sec.unit}
The unit of the adjunction~(\ref{e14}) evaluates on each Segal sheaf on $\bTheta_n$ as the morphism between presheaves
\[
{\rm unit}\colon \cF
\longrightarrow
\sfN\bigl( (\cF_{|\bTheta_\bullet^{\op}})^{\widehat{~}}_{\sf unv}\bigr)
\]
whose value on $T\in \bTheta_n$ is described inductively (via Observation~\ref{t14}) through Corollary~\ref{t1} as the square among spaces
\begin{equation}\label{e21}
\xymatrix{
\cF(T)  \ar[rrrr]  \ar[d]
&&&&
\cF^{\widehat{~}}_{\sf unv}(T)  \ar[d]
\\
\cF(T_{<n})  \ar[rr]^-{\rm induction}
&&
\sfN\bigl((\cF_{|\bTheta_{\bullet<n}})^{\widehat{~}}_{\sf unv}  \bigr)  \ar[rr]
&&
\cF^{\widehat{~}}_{\sf unv}(T_{<n})~.
}
\end{equation}
Without compressing this description via induction, the unit transformation can be described through Corollary~\ref{t1} as the canonical diagram among spaces
\begin{equation}\label{e22}
\xymatrix{
&&
\cF(T)  \ar[dll]  \ar[dl]    \ar[d]  \ar[dr]  
&&
\\
(\cF_{|\bTheta_0^{\op}})^{\widehat{~}}_{\sf unv} (T_{\leq 0})  \ar[dr]
&
(\cF_{|\bTheta_1^{\op}})^{\widehat{~}}_{\sf unv} (T_{\leq 1})   \ar[dr]  \ar[d]
&
(\cF_{|\bTheta_{n-1}^{\op}})^{\widehat{~}}_{\sf unv} (T_{\leq n-1})  \ar[dr]  \ar[d]
&
(\cF_{|\bTheta_n^{\op}})^{\widehat{~}}_{\sf unv} (T)      \ar[d]
\\
&
(\cF_{|\bTheta_1^{\op}})^{\widehat{~}}_{\sf unv} (T_{\leq 0})  
&
\dots
&
(\cF_{|\bTheta_n^{\op}})^{\widehat{~}}_{\sf unv} (T_{\leq n-1})
.
}
\end{equation}

The proof of the next result occupies~\S\ref{sec.unit}.
\begin{lemma}\label{t11}
For each Segal sheaf $\cF$ on $\bTheta_n$, and for each $T\in \bTheta_n$, both of the diagrams among spaces~(\ref{e21}) and~(\ref{e22}) are limit diagrams.

\end{lemma}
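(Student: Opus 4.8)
The plan is to prove the equivalent statement that the unit of the adjunction~(\ref{e015}) is a pointwise equivalence, and to do so by induction on $n$. The base case $n=0$ is trivial, since $\bTheta_0$ is terminal and univalent-completion is the identity. For the inductive step, I first observe that the two displayed diagrams are two presentations of a single assertion. By Corollary~\ref{t1}, the space $\sfN\bigl((\cF_{|\bTheta_\bullet^{\op}})^{\widehat{~}}_{\sf unv}\bigr)(T)$ is both the apex of the iterated limit~(\ref{e22}) and the pullback of the single square in Corollary~\ref{t1}. Invoking Observation~\ref{t14} and the inductive hypothesis --- which identifies $\cF(T_{<n})$ with $\sfN\bigl((\cF_{|\bTheta_{\bullet<n}^{\op}})^{\widehat{~}}_{\sf unv}\bigr)(T_{<n})$, namely the apex of the fence for $\bTheta_{n-1}$ --- the claim that~(\ref{e22}) is a limit diagram becomes equivalent to the claim that~(\ref{e21}) is a pullback. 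So it suffices to prove, for every $T\in\bTheta_n$, that
\[
\cF(T)~\simeq~\cF^{\widehat{~}}_{\sf unv}(T)\underset{\cF^{\widehat{~}}_{\sf unv}(T_{<n})}\times\cF(T_{<n})~.
\]

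Next I would reduce this pullback to cells. Both $\cF$ and its univalent-completion are Segal sheaves, so by Remark~\ref{r8} each carries the cellular presentation of $T$ in $\bTheta_n^{\sf cls}$ to a pullback among spaces, and the same presentation computes $T_{<n}$ from its cells; since pullbacks commute with these finite limits, it suffices to verify the displayed equivalence for $T=c_k$. When $k<n$ one has $(c_k)_{\leq n-1}=c_k$, so $T_{<n}=T$, both vertical maps are identities, and the square is a pullback for trivial reasons. The whole content therefore concentrates at the top cell $T=c_n$, where $T_{<n}=\partial c_n$ and the required equivalence reads
\[
\cF(c_n)~\simeq~\cF^{\widehat{~}}_{\sf unv}(c_n)\underset{\cF^{\widehat{~}}_{\sf unv}(\partial c_n)}\times\cF(\partial c_n)~.
\]
Equivalently, univalent-completion preserves the space of $n$-morphisms lying over a fixed boundary; that is, $\cF\to\cF^{\widehat{~}}_{\sf unv}$ is \emph{fully faithful in top degree}.

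This last statement is the crux. To prove it I would use an explicit expression for univalent-completion, exhibiting $\cF^{\widehat{~}}_{\sf unv}$ as the realization of a simplicial resolution of $\cF$ obtained by iteratively fattening cells by the walking-equivalence objects $\sE$ that appear in the univalence maps $c_{i-1}\wr\sE(c_1)\to c_{i-1}$ of~(\ref{e33}). Since each such map only adjoins an \emph{invertible} $i$-cell, the resolution reorganizes the invertible --- hence groupoidal, strictly sub-top-dimensional --- data of $\cF$ while leaving untouched the genuine $n$-morphisms over a fixed $(n-1)$-skeletal boundary. Termwise, then, the resolution induces an equivalence on the space of $n$-cells relative to its boundary; passing to the fiber over a point of $\cF(\partial c_n)$ recovers the desired equivalence, exactly as, for $n=1$, Rezk-completion of a Segal space is a fully faithful (Dwyer--Kan) map.

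The hard part will be promoting this termwise equivalence past the realization: one must commute the colimit over $\bDelta^{\op}$ defining the completion with the base change that cuts out the fiber over the boundary. Here I would use that $\Spaces$ is an $\infty$-topos, so its colimits are universal --- stable under pullback --- whence the termwise equivalences on $n$-cells relative to boundary assemble, after realization, into the sought equivalence; on the boundary itself the completion is merely essentially surjective, which is what forces the square to be a pullback rather than making the horizontal maps themselves equivalences. The $i$-connectivity established in Lemma~\ref{t9} is consistent with this picture: it records the essential surjectivity that univalent-completion adds at the boundary, while the present lemma records that it adds nothing to the hom-spaces above that boundary.
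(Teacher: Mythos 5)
Your architecture is the same as the paper's: identify the two displayed diagrams via Corollary~\ref{t1} and Observation~\ref{t14}, induct on $n$ so that the statement becomes a single pullback square, replace each univalent-completion by the realization $\bigl|\cF(-\times\sE(\bullet))\bigr|$ of Proposition~\ref{completion}, and then use the $\infty$-topos structure of $\Spaces$ to deduce the pullback-against-the-colimit from a termwise pullback. (Your extra reduction to cells $T=c_k$ is harmless but also unnecessary: the descent argument works for arbitrary $T$, and keeping $T$ general spares you from justifying that $\cF(T_{<n})$ and $\cF^{\widehat{~}}_{\sf unv}(T_{<n})$ decompose over $\cE(T)$ compatibly with the decomposition of $T_{<n}$ into its own cells, which itself requires the finality argument from Lemma~\ref{t4}.) Two remarks on the descent step: what you need is not just universality of colimits but the full descent statement (Theorem~6.1.0.6 of~\cite{HTT}, which the paper cites) --- namely that a Cartesian natural transformation of diagrams induces pullback squares against the colimit; universality alone runs the implication in the other direction. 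You are clearly reaching for the right tool, so I count this as imprecision rather than error.

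The genuine gap is the termwise statement, which is where all the content of the lemma lives and which you do not prove. You need: for each $S\in\bTheta_n$, the square with corners $\cF(T)$, $\cF\bigl(T\times\sE(S)\bigr)$, $\cF(T_{<n})$, $\cF\bigl(T_{<n}\times\sE(S)\bigr)$ is a pullback. Your justification --- that the resolution ``only adjoins invertible, hence strictly sub-top-dimensional, data'' and therefore ``leaves untouched the genuine $n$-morphisms over a fixed boundary'' --- is not an argument, and as stated it is false: the objects $\sE(S)$ for $S\in\bTheta_n$ freely invert $i$-morphisms for \emph{all} $i\leq n$, including the top dimension, so the terms $\cF\bigl(c_n\times\sE(S)\bigr)$ genuinely involve invertible $n$-morphisms and the claim that nothing happens in top degree is exactly what must be established, not assumed. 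The paper proves the termwise pullback by passing to corepresenting objects: since $\cF$ is a Segal sheaf, it suffices to show that the square of strict $n$-categories with corners $T_{<n}\times\sE(S)$, $T\times\sE(S)$, $T_{<n}$, $T$ is a pushout in $\Shv(\bTheta_n)$; this is in turn reduced, using that $S\to\sE(S)$ is an epimorphism and that the relevant Bousfield localizations are Cartesian (products distribute over colimits, per~\cite{rezk-n}), to the corresponding square with $S$ in place of $\sE(S)$, which is a pushout for formal reasons. Some argument of this kind --- or an equivalent identification of $T\times\sE(S)$ as built from $T$ and $T_{<n}\times\sE(S)$ --- is indispensable; without it the proof does not go through.
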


\subsubsection{\bf The counit}\label{sec.counit}
The counit of the adjunction~(\ref{e14}) evaluates on each section $\cC_\bullet$ of the functor $\Shv^{\sf unv}(\bTheta_\bullet^{\op}) \to [n]$ as the morphism between sections
\[
{\rm counit}\colon 
(\sfN(\cC_\bullet)_{|\bTheta_\bullet^{\op}})^{\widehat{~}}_{\sf unv}  \longrightarrow  \cC_\bullet
\]
whose value on $i\in [n]$ is described, through Observation~\ref{t14}, as the canonical functor between $(\infty,i)$-categories
\[
\bigl( \sfN(\cC_{\bullet\leq i}) \bigr)^{\widehat{~}}_{\sf unv}  \longrightarrow  \cC_i
\]
from the univalent-completion of the Segal sheaf $\sfN(\cC_{\bullet\leq i})$ on $\bTheta_i$ that evaluates on each $T\in \bTheta_i$ as the canonical map
\begin{equation}\label{e34}
\sfN(\cC_{\bullet\leq i})(T)
\longrightarrow
\cC_i(T) 
\end{equation}
as in Corollary~\ref{t1}.

\begin{observation}\label{t24}
Let $\un{\cC}$ be a flagged $(\infty,n)$-category.
Through the fully faithful functor~(\ref{e15}), regard $\un{\cC}$ as a section of the functor $\Shv^{\sf unv}(\bTheta_\bullet) \to [n]$.
The connectivity assumptions on each $\cC_i\to \cC_j$ ensure that, for each $0\leq i\leq n$ and each $T\in \bTheta_i$, the map~(\ref{e34}) between spaces
is surjective (on path components).  
From the 2-out-of-3 property for surjections, it follows that the counit evaluates as a surjection
\[
{\rm counit}\colon 
\bigl(  \sfN(\cC_{\bullet \leq i}) \bigr)^{\widehat{~}}_{\sf unv}(T)  \longrightarrow  \cC_i(T)
\]
is surjective (on path components).  

\end{observation}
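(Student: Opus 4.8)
The plan is to establish the two assertions in turn: first, that the map~(\ref{e34}) is surjective on path components for all $0\le i\le n$ and all $T\in\bTheta_i$; and second, that the surjectivity of the counit follows formally. All of the substance is in the first assertion, since the second is a one-line application of the stated $2$-out-of-$3$ principle, which I dispatch at the end.

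To see that~(\ref{e34}), the canonical map $\sfN(\cC_{\bullet\le i})(T)\to\cC_i(T)$, is $\pi_0$-surjective, I would induct on $i$. The base case $i=0$ is immediate, as $\sfN(\cC_{\bullet\le 0})\simeq\cC_0$ and~(\ref{e34}) is then an equivalence. For the inductive step I would invoke the pullback square of Corollary~\ref{t1},
\[
\xymatrix{
\sfN(\cC_{\bullet\le i})(T)  \ar[rr]  \ar[d]
&&
\cC_i(T)  \ar[d]
\\
\sfN(\cC_{\bullet<i})(T_{<i})  \ar[rr]
&&
\cC_i(T_{<i})~,
}
\]
which exhibits~(\ref{e34}) (the top edge) as the base change of its bottom edge along the restriction $\cC_i(T)\to\cC_i(T_{<i})$. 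Because $\Spaces$ is an $\infty$-topos, $0$-connective maps---equivalently $\pi_0$-surjections---are stable under base change, so it is enough to prove the bottom edge is $\pi_0$-surjective. Using the adjunction $\Cat_{i-1}\rightleftarrows\Cat_i$ to identify $\cC_i(T_{<i})\simeq(\cC_i)_{\le i-1}(T_{<i})$, the flag map $\cC_{i-1}\to\cC_i$ (which factors through the maximal $(\infty,i-1)$-subcategory) identifies the bottom edge with the composite
\[
\sfN(\cC_{\bullet<i})(T_{<i})
\longrightarrow
\cC_{i-1}(T_{<i})
\longrightarrow
(\cC_i)_{\le i-1}(T_{<i})~.
\]
The left map is~(\ref{e34}) one level down, $\pi_0$-surjective by the inductive hypothesis (note $T_{<i}\in\bTheta_{i-1}$); so it remains to handle the right map.

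The right map is where the connectivity hypothesis is consumed, and I expect this to be the crux of the argument. Its $\pi_0$-surjectivity would follow from the lemma: \emph{if $F\colon\cA\to\cB$ is an $m$-connective functor between $(\infty,m)$-categories, then $\Map(S,\cA)\to\Map(S,\cB)$ is $\pi_0$-surjective for every $S\in\bTheta_m$}. One applies this with $m=i-1$, with $F$ the functor $\cC_{i-1}\to(\cC_i)_{\le i-1}$---which is $(i-1)$-connective because, via the adjunction $\Cat_{i-1}\rightleftarrows\Cat_i$, maps from cells $c_k$ and their boundaries with $k\le i-1$ into $\cC_i$ agree with those into $(\cC_i)_{\le i-1}$, so the defining lifting problems coincide---and with $S=T_{<i}$. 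To prove the lemma I would filter $S$ by skeleta: by Remark~\ref{r8}, $S$ is a finite iterated pushout of cells $c_k$ ($k\le m$) along the boundary inclusions $\partial c_k\hookrightarrow c_k$, whence each skeletal stage gives a pullback square
\[
\xymatrix{
\Map(S^{(k)},\cA)  \ar[r]  \ar[d]
&
\textstyle\prod\Map(c_k,\cA)  \ar[d]
\\
\Map(S^{(k-1)},\cA)  \ar[r]
&
\textstyle\prod\Map(\partial c_k,\cA)
}
\]
and likewise over $\cB$. Inducting on $k$, one lifts a map $S^{(k)}\to\cB$ by first lifting its restriction to $S^{(k-1)}$ and then lifting each attached $k$-cell relative to its already-lifted boundary, the latter being possible precisely because $m$-connectivity means that $\Map^{\partial c_k/}(c_k,\cA)\to\Map^{\partial c_k/}(c_k,\cB)$ is $\pi_0$-surjective for $k\le m$ (Remark~\ref{r2}, Definition~\ref{def.connective}). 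The delicate point---and the reason this step bears the weight---is that $\pi_0$-surjectivity does not pass to limits for free: one must run the skeletal obstruction argument using the \emph{relative} (fixed-boundary) form of surjectivity rather than surjectivity cell by cell.

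Finally, granting that~(\ref{e34}) is $\pi_0$-surjective, I would conclude as follows. By the universal property of univalent-completion, i.e. the Bousfield localization~(\ref{e20}), the canonical map $\sfN(\cC_{\bullet\le i})\to\cC_i$ into the univalent Segal sheaf $\cC_i$ factors through its unit as
\[
\sfN(\cC_{\bullet\le i})\xra{~\eta~}\bigl(\sfN(\cC_{\bullet\le i})\bigr)^{\widehat{~}}_{\sf unv}\xra{~{\rm counit}~}\cC_i~,
\]
the composite being precisely~(\ref{e34}) on evaluation at $T$. Since the composite is $\pi_0$-surjective, the $2$-out-of-$3$ property for surjections forces the second factor---the counit at $T$---to be $\pi_0$-surjective, which is the claim.
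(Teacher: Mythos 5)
Your proposal is correct and follows the same route the paper takes: the paper states this as an Observation with no separate proof, asserting exactly your two steps (connectivity of the flag maps forces the map~(\ref{e34}) to be $\pi_0$-surjective, and then the factorization through the univalent-completion unit plus 2-out-of-3 gives surjectivity of the counit). Your induction on $i$ via the pullback square of Corollary~\ref{t1}, together with the cell-by-cell lifting lemma for $m$-connective functors (correctly run in the relative, fixed-boundary form), is a sound and careful filling-in of the details the paper leaves to the reader.
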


The proof of the next result occupies~\S\ref{sec.counit}.
\begin{lemma}\label{t12}
Let $\un{\cC} \in \fCat_n$ be a flagged $(\infty,n)$-category.  
For each $0\leq i\leq n$, the canonical morphism between Segal sheaves on $\bTheta_i$
\[
{\rm counit}\colon 
\sfN(\cC_{\bullet\leq i}) \longrightarrow  \cC_i
\]
witnesses a univalent-completion: $(\sfN(\cC_{\bullet\leq i}))^{\widehat{~}}_{\sf unv} \xra{~\simeq~} \cC_i$.

\end{lemma}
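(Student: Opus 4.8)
The plan is to prove the statement by induction on $i$, reducing it at each stage to the assertion that the counit $\sfN(\cC_{\bullet\leq i}) \to \cC_i$ is an essentially surjective, fully faithful map of Segal sheaves on $\bTheta_i$ whose target is univalent. By the characterization of univalent-completion as the localization at univalence diagrams (Rezk~\cite{rezk-n}), any such map exhibits its target as the univalent-completion of its source, which is exactly the claim, since $\cC_i$ is an $(\infty,i)$-category and hence a univalent Segal sheaf. The base case $i=0$ is immediate: $\sfN(\cC_{\bullet\leq 0})=\cC_0$ is a space, already univalent, and the counit is the identity.

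For essential surjectivity, I first observe from the limit presentation of Corollary~\ref{t1} applied to $T=c_0$ --- where every truncation $(c_0)_{\leq k}$ equals $c_0$, so the restriction maps in the diagram are identities and the limit collapses --- that there is a canonical equivalence $\sfN(\cC_{\bullet\leq i})(c_0)\simeq \cC_0(c_0)$, under which the counit becomes the composite $\cC_0(c_0)\to \cdots \to \cC_i(c_0)$ of the structure maps of the flag. This composite is surjective on path components because $\cC_0\to \cC_i$ is $0$-connective. More generally, Observation~\ref{t24} records that the counit is $\pi_0$-surjective on every $T\in\bTheta_i$; in particular it is surjective on equivalence classes of objects, which is essential surjectivity.

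For full faithfulness I compute mapping objects. Fixing objects $x,y\in \cC_0(c_0)$, I evaluate the pullback square of Corollary~\ref{t1} on the objects $[1](S)$ for $S\in\bTheta_{i-1}$ and pass to fibers over $(x,y)$; this identifies the mapping Segal sheaf $\sfN(\cC_{\bullet\leq i})(x,y)$ on $\bTheta_{i-1}$ with the nerve $\sfN(\un{\cD})$ of the reindexed \emph{hom-flag}
\[
\un{\cD} \;=\; \bigl(\cC_1(x,y)\longrightarrow \cC_2(x,y) \longrightarrow \cdots \longrightarrow \cC_i(x,y)\bigr),
\]
in which $\cC_{k+1}(x,y)$ is an $(\infty,k)$-category. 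A short lemma --- that an $m$-connective functor induces $(m-1)$-connective functors on mapping objects, which follows directly from the cell-lifting criterion of Definition~\ref{def.connective} together with the identity $c_{m}=[1](c_{m-1})$ --- shows that $\un{\cD}$ is again a flagged $(\infty,i-1)$-category. The inductive hypothesis then applies to $\un{\cD}$ and identifies the counit on mapping objects, $\sfN(\un{\cD})\to \cC_i(x,y)$, as a univalent-completion; since its target is univalent, this map is in particular an equivalence after completion, so the counit $\sfN(\cC_{\bullet\leq i})\to\cC_i$ is fully faithful, completing the induction.

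The main obstacle is the mapping-object identification in the third paragraph: extracting $\sfN(\un{\cD})$ from the section-and-pullback description of $\sfN$ requires careful bookkeeping of the truncations $([1](S))_{\leq k}$ and of how the limit diagram of Corollary~\ref{t1} restricts along $[1](-)\colon\bTheta_{i-1}\to\bTheta_i$, and it is precisely here that the compatibility of $\sfN$ with the hom-functor must be made precise. A secondary point requiring care is the invocation of the completion criterion on $\bTheta_i$; I would either cite Rezk's completion theorem~\cite{rezk-n} or, to keep the argument self-contained, verify directly that an essentially surjective, fully faithful map into a univalent Segal sheaf is inverted by localization at the univalence diagrams.
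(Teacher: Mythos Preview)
Your approach is genuinely different from the paper's and is essentially correct, but one step is phrased in a way that hides a real subtlety. The paper does not argue by induction on $i$ via a Dwyer--Kan criterion; instead it invokes the explicit formula for univalent-completion established in Proposition~\ref{completion}, namely $\cF^{\widehat{~}}_{\sf unv}(T)\simeq\bigl|\cF(T\times\sE(\bullet))\bigr|$, and then verifies directly that the map $\bigl|\sfN(\un{\cC})(T\times\sE(\bullet_{\leq i}))\bigr|\to\cC_i(T)$ is an equivalence for each $T$. This is done by organizing the relevant maps into a $[2]\times[2]$-shaped diagram and checking that certain sub-squares are pullbacks, using effectiveness of groupoid objects together with the surjectivity recorded in Observation~\ref{t24}. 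Your route trades the explicit completion formula for Rezk's inductive characterization of Dwyer--Kan equivalences (\cite{rezk-n}, around Theorem~7.7): a map of Segal $\bTheta_i$-sheaves into a univalent target is a univalent-completion iff it is essentially surjective and induces DK~equivalences on hom-$\bTheta_{i-1}$-sheaves. That is a perfectly good strategy, and it avoids the need for~\S\ref{sec.formula} entirely, at the cost of importing Rezk's result as a black box.

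The place where your write-up becomes misleading is the word ``fully faithful.'' What your induction actually delivers is that each map on mapping objects $\sfN(\un\cD)\to\cC_i(x,y)$ is a \emph{univalent-completion}, hence a DK~equivalence; it is \emph{not} in general an equivalence of Segal $\bTheta_{i-1}$-sheaves (indeed if it were, $\sfN(\un\cC)$ would already be univalent). So the criterion you are really invoking is ``essentially surjective $+$ hom-wise DK~equivalence $\Rightarrow$ DK~equivalence,'' not ``essentially surjective $+$ fully faithful $\Rightarrow$ equivalence.'' You should state it that way and cite it precisely. The mapping-object identification you flag as the main obstacle is indeed the crux and does come out of Corollary~\ref{t1} as you indicate: for $T=[1](S)$ one has $T_{\leq 0}=\{0,1\}$ and $T_{\leq k}=[1](S_{\leq k-1})$ for $k\geq 1$, and taking fibers of the limit diagram over $(x,y)\in\cC_0(c_0)^{\times 2}$ collapses it to the corresponding limit diagram for $\un\cD$; you should write this out.
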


\section{Univalence completion}
We establish a formula for univalent-completion.  
Section~\S\ref{sec.formula} establishes the formula for univalent-completion.
Section~\S\ref{sec.correct} justifies that this formula indeed implements univalent-completion.

\subsection{The formula}\label{sec.formula}
We establish the somewhat explicit formula for univalent-completion.  

Notation~\ref{d3} is an instance of the following.
\begin{definition}\label{d1}
The functor
\begin{equation}\label{e1}
\sE\colon \bTheta_n
\longrightarrow 
\PShv(\bTheta_n)
\end{equation}
carries an object $T=[p](T_1,\dots,T_p)\in \bTheta_n$ to the presheaf of sets
\[
\sE(T)\colon 
S=[q](S_1,\dots,S_q)  
\mapsto
\Bigl\{ \Bigl(
\{0,\dots,q\}\xra{f} \{0,\dots,p\}
~,~
\bigl(    g_{ij} \in \sE(T_i)(S_j)    \bigr)_{0<i\leq p; ~j\in {\sf Hull}\{ f(i-1),f(i)\}}
\Bigr) \Bigr\}    ~.
\]

\end{definition}

Each value of the functor~(\ref{e1}) is an explicit description of strict $n$-groupoid-completion, as we now observe.

\begin{observation}\label{t2}
The functor~(\ref{e1}) is identical to the composite functor
\[
\sE\colon \bTheta_n \hookrightarrow \Cat_n^{\sf strict} \xra{\text{$n$-groupoid completion}} \Gpd_n^{\sf strict}
\hookrightarrow 
\Cat_n^{\sf strict}
\xra{~\rm nerve~}
\PShv(\bTheta_n)~,
\]
which we briefly explain.
Here, $\Gpd_n^{\sf strict}\subset \Cat_n^{\sf strict}$ is its full $(2,1)$-subcategory consisting of those categories enriched in $\Gpd_{n-1}^{\sf strict}$ for which each (1-)morphism is invertible. 
The first functor is the fully faithful functor of Remark~\ref{r7}.
The second functor is the left adjoint to the inclusion of the strict $n$-groupoids. 
The third functor is the inclusion of the strict $n$-groupoids.
The fourth functor is the \emph{nerve} functor of Remark~\ref{r7}.

\end{observation}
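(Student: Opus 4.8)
The plan is to prove the identification by unwinding both functors on objects and inducting on $n$. Since both the functor~(\ref{e1}) and the displayed composite land in presheaves of \emph{sets}, it suffices to produce a bijection, natural in $S=[q](S_1,\dots,S_q)\in\bTheta_n$, between their values at $S$. For the composite this value is computed by the nerve, which is restricted Yoneda along the embedding of Remark~\ref{r7}; writing $G(T)$ for the strict $n$-groupoid completion of $T$, the value at $S$ is thus the set $\Cat_n^{\sf strict}\bigl(S,G(T)\bigr)$ of strict $n$-functors $S\to G(T)$. The base case $n=0$ is immediate, since $\bTheta_0=\ast$ and both sides are singletons, so I would assume the statement for $n-1$.

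Next I would analyze a strict $n$-functor $F\colon S\to G(T)$ using the presentation of $S$ under the embedding of Remark~\ref{r7}: as a strict $n$-category, $S$ is generated by its objects $\{0,\dots,q\}$ together with the hom-$(n-1)$-categories $S_j=\Hom_S(j-1,j)$, and every hom of $S$ is a product $\Hom_S(a,b)\cong S_{a+1}\times\dots\times S_b$. Consequently $F$ amounts to a function $f\colon\{0,\dots,q\}\to\{0,\dots,p\}$ on objects, together with, for each $0<j\leq q$, a strict $(n-1)$-functor $F_j\colon S_j\to\Hom_{G(T)}\bigl(f(j-1),f(j)\bigr)$; because the homs of $S$ are products, the universal property imposes no further compatibility among the $F_j$. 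Since the objects of $G(T)$ are connected by $1$-morphisms in both directions, $f$ is an \emph{arbitrary} function, which accounts for the non-monotonicity implicit in~(\ref{e1}).

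The crux is to compute the hom-$(n-1)$-groupoids of $G(T)$. I would show, by direct inspection of strict $n$-groupoid completion applied to the wreath object $T=[p](T_1,\dots,T_p)$, that for any objects $a,b$ there is a canonical identification of strict $(n-1)$-groupoids
\[
\Hom_{G(T)}(a,b)~\cong~\prod_{i\in{\sf Hull}\{a,b\}}G(T_i)~,
\]
the product ranging over the edges $i$ of $T$ in the hull of $\{a,b\}$ (hence empty, so terminal, when $a=b$), with each factor the already-completed $(n-1)$-groupoid $G(T_i)$; traversing an edge backward when $b<a$ costs nothing since $G(T_i)$ is itself a groupoid. Granting this, the datum $F_j$ is exactly a tuple of strict $(n-1)$-functors $S_j\to G(T_i)$ indexed by $i\in{\sf Hull}\{f(j-1),f(j)\}$, and by the inductive hypothesis each such $(n-1)$-functor is precisely an element of $\sE(T_i)(S_j)$. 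Assembling $f$ together with these tuples reproduces verbatim the set $g_{ij}\in\sE(T_i)(S_j)$ defining $\sE(T)(S)$ in~(\ref{e1}); naturality in $S$ is automatic, since a morphism $S'\to S$ acts on both descriptions by precomposition, i.e.\ through the single functor $S'\to S\to G(T)$.

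The main obstacle is the hom computation displayed above: one must verify that inverting $1$-morphisms in the strict wreath object manufactures \emph{no} extra endomorphisms --- so that $\Hom_{G(T)}(a,a)$ is terminal --- and yields precisely the interval-indexed products of lower completions. This is where the ``freeness'' of $T$ is used: its underlying $1$-skeleton is a tree-like poset whose completion is contractible, so the only data in a morphism of $G(T)$ is the edgewise data recorded by the ${\sf Hull}$, which is exactly what the inductive hypothesis converts into the $g_{ij}$.
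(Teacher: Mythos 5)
The paper records this statement as an unproved ``Observation,'' so there is no argument of its own to compare against; what matters is whether your argument stands on its own. Your overall strategy is the natural one, and your reduction is correct up to the crux: the nerve of the completion evaluated at $S=[q](S_1,\dots,S_q)$ is the set of strict $n$-functors $S\to G(T)$, and such a functor is an object-function $f$ together with independent $(n-1)$-functors $S_j\to\Hom_{G(T)}(f(j-1),f(j))$. You also correctly identify that everything hinges on computing $\Hom_{G(T)}(a,b)$. But the hom-computation you propose to verify ``by direct inspection'' is false for $n\geq 2$, and it cannot be repaired within your framework.

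Take $T=c_2=[1](c_1)$. Your formula asserts $\Hom_{G(c_2)}(x,x)\cong\prod_{i\in\emptyset}(\cdots)\cong\ast$ and $\Hom_{G(c_2)}(x,y)\cong G(c_1)\cong\sE(c_1)$, which has two objects. No strict $2$-category has these hom-categories: for $\alpha,\beta\in\Hom(x,y)$ and $\gamma\in\Hom(y,x)$ one computes $\alpha\circ(\gamma\circ\beta)=\alpha\circ\id_x=\alpha$ while $(\alpha\circ\gamma)\circ\beta=\id_y\circ\beta=\beta$, so associativity forces $\alpha=\beta$, contradicting $|\Hom(x,y)|=2$. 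And the genuine reflection of $c_2$ into $\Gpd_2^{\sf strict}$ --- the free strict $2$-groupoid on two objects, two parallel $1$-cells $f,g\colon x\to y$ and one $2$-cell $f\Rightarrow g$ --- has $\Hom(x,x)\cong\ZZ$ on objects, generated by $g^{-1}\circ f$: strictly inverting the $1$-morphisms makes every endomorphism of $x$ become $2$-isomorphic to $\id_x$, but nothing makes it \emph{equal} to $\id_x$. This is exactly where your heuristic slips: contractibility of the completed $1$-skeleton says the hom-groupoids are \emph{connected}, whereas your interval-product formula needs the endomorphism categories to be \emph{terminal}. Concretely, $\Cat_2^{\sf strict}\bigl(c_1,G(c_2)\bigr)$ is infinite while the set prescribed by~(\ref{e1}) at $c_1$ has six elements, so no induction of this shape can produce the asserted bijection. (The same computation shows that the set-valued recipe of Definition~\ref{d1}, read in the only way that typechecks, cannot be the nerve of \emph{any} strict $2$-category with those cell counts; so the identification of the Observation can only be expected after passing to a setting in which $2$-isomorphic $1$-morphisms are identified, e.g.\ up to the ambient localization --- a strict, set-level bijection of the kind you are constructing does not exist.)
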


Observation~\ref{t2} yields the following.
\begin{observation}\label{t22}
The functor~(\ref{e1}) factors through $n$-groupoids, and in particular Segal sheaves: $\sE\colon \bTheta_n \xra{(\ref{e1})} \nGpd\underset{\rm Obs~\ref{t19}}\subset \Shv(\bTheta_n)$.

\end{observation}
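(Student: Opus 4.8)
The plan is to read the statement off the identification in Observation~\ref{t2}, which presents each value $\sE(T)$ as the nerve of a strict $n$-groupoid. Writing $\sG_T\in \Gpd_n^{\sf strict}$ for the $n$-groupoid completion of the strict $n$-category corresponding to $T$, and $N\colon \Cat_n^{\sf strict}\to \PShv(\bTheta_n)$ for the nerve functor of Remark~\ref{r7}, Observation~\ref{t2} gives $\sE(T)\simeq N(\sG_T)$. It therefore suffices to check that the nerve of a strict $n$-groupoid satisfies the two conditions of Definition~\ref{d6}. Since condition~(1) there is exactly the Segal condition, once both conditions hold the factorization through $\Shv(\bTheta_n)$, and hence the ``in particular Segal sheaves'' clause, follow formally via the fully faithful inclusion of Observation~\ref{t19}.

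First I would verify condition~(1), that $N(\sC)$ is a Segal sheaf for an arbitrary strict $n$-category $\sC$. By Remark~\ref{r7} the nerve is the restricted Yoneda functor $\sC\mapsto \Cat_n^{\sf strict}(-,\sC)$ along the fully faithful embedding $\bTheta_n\hookrightarrow \Cat_n^{\sf strict}$. Because a contravariant hom-functor carries colimit diagrams to limit diagrams of spaces, it is enough to know that this embedding carries Segal covers to colimit diagrams in $\Cat_n^{\sf strict}$; granting this, $N(\sC)$ carries the opposites of Segal covers to limit diagrams, so $N(\sC)\in \Shv(\bTheta_n)$. After Remark~\ref{r8}, checking the embedding on Segal covers reduces to checking it on the finitely many generating pushouts of closed morphisms in $\bTheta_n^{\sf cls}$, where it is the familiar gluing of cells along shared subcells.

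Next I would verify condition~(2), the filling property against $c_i\xra{(\ref{e25})} c_{i-1}\wr\sE(c_1)$, and this is where the passage to the groupoid completion enters: by construction $\sG_T$ is a strict $n$-groupoid, so every $i$-morphism of $\sG_T$ is (uniquely) invertible. Unwinding corepresentability from Notation~\ref{d3}, a map $c_{i-1}\wr\sE(c_1)\to N(\sG_T)$ is an invertible $i$-morphism of $\sG_T$ while a map $c_i\to N(\sG_T)$ is an arbitrary $i$-morphism, and invertibility of every $i$-morphism supplies the required lift. Equivalently, one may simply cite the remark following Definition~\ref{d6}, which records that for a nerve $N(\sC)$ condition~(2) is precisely the assertion that $\sC$ is a strict $n$-groupoid; this applies to $\sC=\sG_T$.

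I expect the only nonformal step to be the single input to condition~(1): that $\bTheta_n\hookrightarrow \Cat_n^{\sf strict}$ sends Segal covers to colimit diagrams. Condition~(2) is immediate from the remark after Definition~\ref{d6}, and the passage from the two conditions to a factorization through $\Shv(\bTheta_n)$ is purely formal. This Segal-compatibility is the genuine combinatorial content about $\bTheta_n$; I would either invoke it as a known property of Berger's embedding $\bTheta_n\hookrightarrow \Cat_n^{\sf strict}$ (Remark~\ref{r7}), or prove it directly by induction on $n$, using the reduction of Remark~\ref{r8} to verify that each generating pushout of closed morphisms is preserved by the strict $n$-categorical gluing of cells.
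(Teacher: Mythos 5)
Your proposal is correct and matches the paper's (implicit) argument: the paper derives Observation~\ref{t22} directly from Observation~\ref{t2}, i.e.\ from the identification of each $\sE(T)$ as the nerve of a strict $n$-groupoid, exactly as you do. You simply make explicit the two checks the paper leaves unstated --- that nerves of strict $n$-categories satisfy the Segal condition (via Berger's compatibility of the embedding $\bTheta_n\hookrightarrow \Cat_n^{\sf strict}$ with Segal covers) and that condition~(2) of Definition~\ref{d6} for a nerve is precisely the groupoid condition, as already recorded in the remark following that definition.
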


Using the adjoint functor theorem, there is a Bousfield localization 
\begin{equation}\label{segal.cplt}
\Shv(\bTheta_n) ~\rightleftarrows~\Shv^{\sf unv}(\bTheta_n)~,\qquad \cF\mapsto \cF^{\widehat{~}}_{\sf unv}~.
\end{equation}
We now give a somewhat explicit formula for this left adjoint, which is inspired the \emph{classifying diagram} construction of~\cite{rezk}, which is elaborated in~\cite{joyaltierney}.
Consider the composite functor
\begin{equation}\label{e19}
L  \colon \PShv(\bTheta_n)
\xra{ \cF \mapsto \cF(-\times \sE(\bullet) )  }
\PShv(\bTheta_n \times \bTheta_n)
\xra{ \cH \mapsto |\cH|_\bullet  }
\PShv(\bTheta_n)
~,\qquad
\cF\mapsto L(\cF) := \bigl( T\mapsto \bigl| \cF(T\times \sE(\bullet) ) |_{\bullet} \bigr)~.
\end{equation}
The initial functor $\ast \xra{\lag [0]\rag} \bTheta_n^{\op}$ determines the initial functor $\bTheta_n^{\op} = \bTheta_n^{\op}\times \ast \to \bTheta_n^{\op}\times \bTheta_n^{\op}$.
This, in turn, determines the natural transformation
\begin{equation}\label{e24}
\id 
\longrightarrow
L
~,\qquad
\cF\mapsto \Bigl( \cF =  \cF( - \times \sE([0]) ) 
\to   
\bigl| \cF(- \times \sE(\bullet) ) |_{\bullet} =: L(\cF)  \Bigr)  ~.
\end{equation}

The proof of the next result occupies~\S\ref{sec.correct}.
\begin{prop}\label{completion}
There is a canonical commutative diagram among $\infty$-categories:
\[
\xymatrix{
\Shv(\bTheta_n)  \ar[rr]^-{(-)^{\widehat{~}}_{\sf unv}}   \ar[d]
&&
\Shv^{\sf unv}(\bTheta_n)  \ar[d]
\\
\PShv(\bTheta_n)  \ar[rr]^-{L}
&&
\PShv(\bTheta_n)   ,
}
\]
which extends as a commutative diagram among $\infty$-categories:
\[
\xymatrix{
c_1  \times  \Shv(\bTheta_n)  \ar[rr]^-{\rm unit}  \ar[d]
&&
\Shv(\bTheta_n)  \ar[d]
\\
c_1  \times  \PShv(\bTheta_n)  \ar[rr]^-{(\ref{e24})}
&&
\PShv(\bTheta_n)    .
}
\]
In other words, for each Segal sheaf $\cF\in \PShv(\bTheta_n)$, and for each $T\in \bTheta_n$, there is a canonical identification between spaces:
\[
\bigl|   \Map\bigl( T \times \sE(\bullet) , \cF \bigr)   \bigr|
\xra{~\simeq~} \cF^{\widehat{~}}_{\sf unv}(T)   ~;
\]
with respect to this identification, the value of the unit of the adjunction~(\ref{segal.cplt}) on $\cF$ evaluates on $T \in \bTheta_n$ as the canonical map between spaces
\[
\cF(T)\simeq \Map\bigl(T \times \sE([0]) , \cF \bigr)  \longrightarrow   \bigl| \Map\bigl( T\times \sE(\bullet) , \cF \bigr) \bigr|\simeq \cF^{\widehat{~}}_{\sf unv}(T)   ~.
\]
\end{prop}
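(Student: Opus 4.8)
The plan is to recognize the endofunctor $L$ of (\ref{e19}), equipped with the unit (\ref{e24}), as the Bousfield localization (\ref{segal.cplt}) onto univalent Segal sheaves. I would invoke the recognition criterion for localizations from~\cite{HTT}: a functor $L$ equipped with $\eta\colon \id \to L$ is a localization onto the full subcategory of its local objects — those $\cF$ for which $\eta_\cF$ is an equivalence — precisely when the natural transformations $L\eta$ and $\eta L$ from $L$ to $L\circ L$ agree and are equivalences. Accordingly the argument splits into three tasks: (i) $L$ preserves the Segal condition, so that $L$ restricts to an endofunctor of $\Shv(\bTheta_n)$; (ii) $L(\cF)$ is a univalent Segal sheaf for every $\cF\in\Shv(\bTheta_n)$; and (iii) $\eta_\cF$ is an equivalence exactly when $\cF$ is already univalent. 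Granting these, the local objects are exactly $\Shv^{\sf unv}(\bTheta_n)$, the recognition criterion identifies $L$ with $(-)^{\widehat{~}}_{\sf unv}$ and (\ref{e24}) with the unit, and the two displayed diagrams, as well as the explicit description of the unit on $T$, follow by unwinding the construction.

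For task (i), I would first record that for each fixed $S\in\bTheta_n$ the presheaf $T\mapsto \cF(T\times\sE(S))$ is again a Segal sheaf: by the reduction of Remark~\ref{r8} it suffices to test against pushouts in $\bTheta_n^{\sf cls}$, and since $-\times\sE(S)$ preserves colimits in the Cartesian-closed presentable $\infty$-category $\PShv(\bTheta_n)$ while $\sE(S)$ is itself Segal by Observation~\ref{t22}, such a pushout is carried to a pullback by $\cF$. It then remains to realize over $S$. Here I would use that $\Spaces$ is an $\infty$-topos, so that colimits are universal; concretely, by Observation~\ref{t22} the object $S\mapsto \cF(T\times\sE(S))$ is built out of groupoid objects, and descent in $\Spaces$ lets its realization be computed compatibly with the finite limits cutting out the Segal condition. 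This yields $L(\cF)\in\Shv(\bTheta_n)$.

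Tasks (ii) and (iii) are the heart of the matter, and both are driven by one computation about $\sE$: because $n$-groupoid completion turns an invertible $i$-morphism into data indistinguishable from an identity, the functor $\sE$ carries the corepresenting maps (\ref{e33}), $c_{i-1}\wr\sE(c_1)\to c_{i-1}$, and (\ref{e25}), $c_i\to c_{i-1}\wr\sE(c_1)$, to equivalences of Segal sheaves. This is the engine of the classifying-diagram mechanism of~\cite{rezk,joyaltierney}: realizing over the auxiliary variable $\sE(\bullet)$ fills in the invertibility witnesses demanded by the univalence diagrams of Definition~\ref{def.segal.cov}, so that $L(\cF)$ sends each $i$-univalence diagram to a limit, giving (ii). For (iii), the nontrivial direction is that a univalent $\cG$ is $L$-local. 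Here the augmented object $S\mapsto\cG(T\times\sE(S))$, augmented over $\cG(T)=\cG\bigl(T\times\sE([0])\bigr)$ via the terminal object $[0]\in\bTheta_n$ with $\sE([0])\simeq\ast$, acquires a contracting structure precisely from the univalence of $\cG$ — univalence says exactly that $\cG$ does not distinguish the $\sE$-invertible directions — so the augmentation $\cG(T)\to\bigl|\cG(T\times\sE(\bullet))\bigr|$ is an equivalence and $\eta_\cG$ is an equivalence. Conversely, if $\eta_\cF$ is an equivalence then $\cF\simeq L(\cF)$ is univalent by (ii).

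I expect the main obstacle to be the precise interchange of geometric realization with the finite limits defining the Segal and univalence conditions — that is, making rigorous that $L$ preserves \emph{both} conditions, uniformly in $T$ and compatibly with the maps (\ref{e33}) and (\ref{e25}). This is exactly the step that leans on the $\infty$-topos structure of $\Spaces$, namely universality of colimits together with the effectivity of the groupoid objects supplied by Observation~\ref{t22}. Once (i)--(iii) are established, the identification of the two displayed squares and of the unit as $\cF(T)\simeq\Map\bigl(T\times\sE([0]),\cF\bigr)\to\bigl|\Map\bigl(T\times\sE(\bullet),\cF\bigr)\bigr|$ is formal from the construction of (\ref{e24}) and the recognition criterion.
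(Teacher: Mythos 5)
Your overall strategy --- recognize $L$ with the unit $(\ref{e24})$ as a localization via the three tasks (i) $L$ preserves the Segal condition, (ii) $L(\cF)$ is univalent, (iii) the unit is an equivalence exactly on univalent sheaves --- is precisely the decomposition the paper uses. But the justifications you offer for (i) and (ii) each have a genuine gap.

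For (i), the crux is that the pullback square of $n$-groupoid objects $S\mapsto \cF(T^{?}\times \sE(S))$ obtained from a closed pushout in $\bTheta_n$ must remain a pullback after applying $|-|$. You assert this follows from universality of colimits together with effectivity of groupoid objects, but that is false in general: geometric realization does \emph{not} preserve pullbacks of groupoid objects in an $\infty$-topos. For a discrete group $G$, the levelwise pullback of $\ast \to B_\bullet G \leftarrow \ast$ (all three being groupoid objects) is the constant object $\ast$, whose realization is $\ast$, whereas $\ast\times_{BG}\ast\simeq G$. Some additional input about the specific maps in play is unavoidable; the paper supplies it by restricting along the final diagonal $\bDelta^{\times n}\to\bTheta_n$, observing that the relevant vertical maps are simultaneously Cartesian and coCartesian fibrations of Segal simplicial spaces, and invoking Quillen's Theorem B (from~\cite{fibrations}) inductively in $n$. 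Without an argument of this kind your step ``descent lets the realization be computed compatibly with the finite limits'' does not go through.

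For (ii), the observation that $\sE$ inverts the maps $(\ref{e33})$ and $(\ref{e25})$ is a heuristic for \emph{why} one expects $L(\cF)$ to be univalent, not a proof that it is. The actual content, after a diagram chase reducing univalence of $L(\cF)$ to the single claim that $|\cG|\to|\cG^S|$ is an equivalence for every $n$-groupoid object $\cG$ and every $S\in\bTheta_n$, is the construction of an explicit simplicial contraction $H_S\colon c_n\times S\to S$ of $S$ onto its initial object, built by induction on $n$ using the Segal decomposition of $S=[p](S_1,\dots,S_p)$. This combinatorial construction is the engine of task (ii) and is absent from your proposal. Your task (iii) is essentially the paper's argument (univalence forces $\cF(T)\to\cF(T\times\sE(S))$ to be an equivalence, so the augmented object is constant), and the formal assembly at the end is fine once (i) and (ii) are repaired.
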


\subsection{The formula is correct}\label{sec.correct}
We prove Proposition~\ref{completion}.
This result is proved upon establishing the following features of the functor $L$ of~(\ref{e19}) and the natural transformation $\id \to L$ of~(\ref{e24}).  
\begin{enumerate}

\item
Should $\cF$ carry (the opposites of) Segal covers to limit diagrams, then the presheaf $L(\cF)$ also carries (the opposites of) Segal covers to limit diagrams.
In other words, there is a factorization of the restriction:
\[
\xymatrix{
\Shv(\bTheta_n)  \ar@{-->}[rr]^-L  \ar[d]
&&
\Shv(\bTheta_n)  \ar[d]
\\
\PShv(\bTheta_n)  \ar[rr]^-L   
&&
\PShv(\bTheta_n)    .
}
\]

\item
Should $\cF$ carry (the opposites of) Segal covers to limit diagrams, then the presheaf $L(\cF)$ carries (the opposites of) univalence diagrams to limit diagrams.
In other words, there is a further factorization of the restriction:
\[
\xymatrix{
\Shv(\bTheta_n)  \ar@{-->}[rr]^-L  \ar[dr]_-L
&&
\Shv^{\sf unv}(\bTheta_n)  \ar[dl]
\\
&
\PShv(\bTheta_n)  
& .
}
\]

\item
Should $\cF$ carry (the opposites of) Segal covers and univalence diagrams to limit diagrams, then the natural transformation $\id \to L$ evaluates as an equivalence.
In other words, the lax-commutative diagram
\[
\xymatrix{
\Shv^{\sf unv}(\bTheta_n)  \ar@(u,u)[rr]^-{\id}  \ar[dr]
&
\Downarrow
&
\Shv^{\sf unv}(\bTheta_n)  
\\
&
\PShv(\bTheta_n)  \ar[ur]_-L   
&   
.
}
\]
is, in fact, a commutative diagram.

\end{enumerate}
\begin{itemize}

\item 
{\bf Proof of~(1):}
Let $\cF\in \Shv(\bTheta_n)$ be a Segal sheaf.  
Let 
\[
\xymatrix{
T^0  \ar[r]  \ar[d]
&
T^+  \ar[d]
\\
T^-  \ar[r]
&
T
}
\]
be a pushout diagram on $\bTheta_n^{\sf cls}$.  
This diagram determines the diagram in $\Fun\bigl(\bTheta_n , \PShv(\bTheta_n)\bigr)$:
\[
\xymatrix{
T^0  \times \sE(\bullet)  \ar[rr]  \ar[d]
&&
T^+\times \sE(\bullet)  \ar[d]
\\
T^-\times \sE(\bullet)  \ar[rr]
&&
T  \times \sE(\bullet) .
}
\]
Through Observation~\ref{t2}, using that the Segal condition is closed under products of presheaves, this is in fact a diagram in $\Fun\bigl(\bTheta_n , \Shv(\bTheta_n)\bigr) \subset \Fun\bigl(\bTheta_n , \PShv(\bTheta_n)\bigr)$.
Established in~\cite{rezk-n}, products in $\Shv(\bTheta_n)$ distribute over colimits.  
Therefore the above diagram is a pushout diagram in $\Fun\bigl(\bTheta_n , \Shv(\bTheta_n)\bigr)$.
Applying the Segal sheaf $\cF$ results in the pullback diagram in $\Fun\bigl(\bTheta_n^{\op} , \Spaces\bigr)$:
\begin{equation}\label{e41}
\xymatrix{
\cF\bigl( T  \times \sE(\bullet)  \bigr) \ar[rr]  \ar[d]
&&
\cF\bigl( T^+  \times \sE(\bullet)  \bigr)   \ar[d]
\\
\cF\bigl( T^-  \times \sE(\bullet)  \bigr)   \ar[rr]
&&
\cF\bigl( T^0  \times \sE(\bullet)  \bigr)    .
}
\end{equation}
Because it is the case for each $\sE(\bullet)$, this diagram is in fact a diagram in the full $\infty$-subcategory $\nGpd\subset \Fun(\bTheta_n^{\op},\Spaces)$ of Observation~\ref{t19}.
Because this full $\infty$-subcategory is closed under limits, the above diagram is a pullback diagram in $\nGpd$. 
The functor $|-|\colon \nGpd \to \Spaces$ carries the above diagram to a diagram among spaces
\begin{equation}\label{e40}
\xymatrix{
L(\cF)(T) := \bigl| \cF\bigl( T  \times \sE(\bullet)  \bigr) \bigr|   \ar[rr]  \ar[d]
&&
\bigl| \cF\bigl( T^+ \times \sE(\bullet)  \bigr) \bigr| =: L(\cF)(T^+)  \ar[d]
\\
L(\cF)(T^-) := \bigl| \cF\bigl( T^-  \times \sE(\bullet)  \bigr) \bigr| \ar[rr]
&&
\bigl| \cF\bigl( T^0  \times \sE(\bullet)  \bigr) \bigr|   =: L(\cF)(T^0).
}
\end{equation}
This proof that $L(\cF)$ is a Segal sheaf whenever $\cF$ is a Segal sheaf is complete once we show~(\ref{e40}) is a pullback among spaces.

Restricting the diagram~(\ref{e41}) in $\PShv(\bTheta_n)$ along the standard diagonal functor $\bDelta^{\times n} \to \bTheta_n$ determines a pullback diagram
\begin{equation}\label{e42}
\xymatrix{
\cG\bigl( [\bullet],\dots,[\bullet]\bigr)   \ar[rr]  \ar[d]
&&
\cG_+\bigl( [\bullet],\dots,[\bullet]\bigr)  \ar[d]
\\
\cG_-\bigl( [\bullet],\dots,[\bullet]\bigr)  \ar[rr]
&&
\cG_0\bigl( [\bullet],\dots,[\bullet]\bigr)   
}
\end{equation}
in $\PShv(\bDelta^{\times n})$.
Because the diagram~(\ref{e41}) belongs to $\nGpd$, this restricted diagram~(\ref{e42}) belongs to the $\infty$-category $\Gpd^n[\cS]$ of groupoid objects in the $\infty$-category $\Gpd^{n-1}[\cS]$, where $\Gpd^1[\cS] = 1\Gpd[\cS]$ is the $\infty$-category of groupoid objects in $\Spaces$.
Now, let $P:=\bigl([p_1],\dots,[p_{n-1}]\bigr)\in \bDelta^{n-1}$.
Consider the functor $\bDelta \xra{ ( \lag P\rag , \id  ) } \bDelta^{\times n-1}\times \bDelta =\bDelta^{\times n}$.
Consider the restriction 
\begin{equation}\label{e43}
\xymatrix{
\cG^{P} (\bullet)   \ar[rr]  \ar[d]
&&
\cG_+^{P} (\bullet) \ar[d]
\\
\cG_-^{P} (\bullet)   \ar[rr]
&&
\cG_0^{P} (\bullet)   
}
\end{equation}
of the diagram~(\ref{e42}) along this functor.
This diagram~(\ref{e43}) is a pullback diagram among simplicial spaces.  
Because $\cF\in \Shv(\bTheta_n)$ is a Segal sheaf, inspecting the standard diagonal functor $\bDelta^{\times n} \to \bTheta_n$ gives that the vertical arrows in the pullback diagram~(\ref{e43})  are Cartesian fibrations as well as coCartesian fibrations among Segal sheaves on $\bDelta$.
Quillen's Theorem B (of~\cite{fibrations}) applied to~(\ref{e43}) grants that the resulting diagram among spaces
\begin{equation}\label{e44}
\xymatrix{
\bigl|  \cG^{P} (\bullet)  \bigr|  \ar[rr]  \ar[d]
&&
\bigl| \cG_+^{P} (\bullet) \bigr|  \ar[d]
\\
\bigl| \cG_-^{P} (\bullet)   \bigr|  \ar[rr]
&&
\bigl|  \cG_0^{P} (\bullet)   \bigr|
}
\end{equation}
is a pullback.  
Proceeding by induction on $n$ gives that the diagram among spaces
\begin{equation}\label{e45}
\xymatrix{
\bigl|  \cG\bigl( [\bullet],\dots,[\bullet]\bigr)   \bigr|  \ar[rr]  \ar[d]
&&
\bigl|  \cG_+\bigl( [\bullet],\dots,[\bullet]\bigr)   \bigr| \ar[d]
\\
\bigl|  \cG_-\bigl( [\bullet],\dots,[\bullet]\bigr)   \bigr| \ar[rr]
&&
\bigl|  \cG_0\bigl( [\bullet],\dots,[\bullet]\bigr)     \bigr|
}
\end{equation}
is a pullback diagram.
Now, there is a canonical morphism among square diagrams in $\Spaces$ from~(\ref{e45}) to diagram~(\ref{e40}).
By direct examination using Quillen's Theorem A, the diagonal functor $\bDelta^{\times n}\to \bTheta_n$ is final.
It follows that this comparison morphism from~(\ref{e45}) to~(\ref{e40}) is an equivalence between square diagrams.
Finally, because~(\ref{e45}) is a pullback diagram, then so too is~(\ref{e40}).

\item 
{\bf Proof of~(2):}
Let $\cF\in \Shv(\bTheta_n)$ be a Segal sheaf.  
By Definition~\ref{def.segal.cov} of the univalence condition, we must show, for each $0\leq i< n$, that the canonical map between spaces
\begin{equation}\label{e27}
L(\cF)(c_i)
\longrightarrow 
L(\cF)\bigl(c_{i}\wr \sE(c_1) \bigr)
\end{equation}
is an equivalence.  
Consider the diagram in $\Shv(\bTheta_n)$:
\[
\xymatrix{
\partial c_i \times \sE(c_{i+1})  \ar[dr]  \ar[rrr]  \ar[ddd]
&
&
&
c_i \times \sE(c_{i+1})  \ar[ddd]  \ar[dl]
\\
&
\partial c_i  \ar[r]  \ar[d]
&
c_i  \ar[d]
&
\\
&
\partial c_i  \ar[r]
&
c_i  
&
\\
\partial c_i  \ar[ur]  \ar[rrr]
&
&
&
c_i \wr \sE(c_1)  .   \ar[ul]  
}
\]
In this diagram, the outer square is a pushout, and the inner square is trivially a pushout.
Furthermore, the diagonal arrows in this square are carried to equivalences by the localization $\Shv(\bTheta_n) \to \Shv^{\sf unv}(\bTheta_n)$. 
Applying the Segal sheaf $L(\cF)$ to this diagram results in the diagram among spaces,
\[
\xymatrix{
L(\cF)( c_i  )  \ar[dr]  \ar[rrr]  \ar[ddd]
&
&
&
L(\cF)( c_i )   \ar[ddd]  \ar[dl]
\\
&
L(\cF)\bigl(c_i \wr \sE(c_1)\bigr)    \ar[r]  \ar[d]
&
L(\cF)\bigl(c_i \times \sE(c_{i+1}) \bigr)    \ar[d]
&
\\
&
L(\cF)\bigl( \partial c_i \times \sE(c_{i+1}) \bigr)  \ar[r]
&
L(\cF)( \partial c_i  )
&
\\
L(\cF)( \partial c_i )    \ar[ur]  \ar[rrr]
&
&
&
L(\cF)( \partial c_i )   , \ar[ul]  
}
\]
in which both the inner and the outer squares are pullback squares.  
Therefore, the down-rightward arrow, which is~(\ref{e27}), is an equivalence provided each of the down-leftward arrow and the up-rightward arrow is an equivalence.  
This is implied by showing, for each $S,T\in \bTheta_n$, that the canonical map between spaces,
\[
L(\cF)(T) := 
\bigl| \cF\bigl(T\times \sE(\bullet) \bigr) \bigr|
\longrightarrow
\bigl| \cF\bigl(T\times \sE(S)\times \sE(\bullet) \bigr) \bigr|
=: L(\cF)\bigl(T\times \sE(S)\bigr)   ~,
\]
is an equivalence. 
So fix $S,T\in \bTheta_n$.

As established in the {\bf Proof of~(1)} above, the presheaf $\cG_T(\bullet):= \cF\bigl(T\times \sE(\bullet)\bigr)$ on $\bTheta_n$ belongs to the full $\infty$-subcategory $\nGpd\underset{\rm Obs~\ref{t19}}\subset \PShv(\bTheta_n)$.  
Inspecting the Definition~\ref{d1} of the functor $\sE\colon \bTheta_n \to \nGpd$, its left Kan extension to $\Cat_n^{\sf strict}$ preserves products.  
Therefore $\sE(S\times \bullet) = \sE(S)\times \sE(\bullet)$.
We conclude an identification of the exponential presheaf, 
\[
\cG_T(\bullet)^{S} := \cG_T(S\times \bullet) = \cF\bigl(T\times\sE(S\times \bullet)\bigr) = \cF\bigl(T\times \sE(S)\times \sE(\bullet)\bigr)   ~, 
\]
and that this is also an $n$-groupoid object in $\Spaces$.  
So it remains to establish the following general claim:
\begin{itemize}
\item[] {\bf ($\dagger$):}
Let $\bTheta_n^{\op}\xra{\cG}\Spaces$ be an $n$-groupoid object.
Let $S\in \bTheta_n$ be an object.
Consider the morphism $\cG \to \cG^S$ between $n$-groupoid objects induced by the unique morphism $S\to \ast$ in $\bTheta_n$.  
The resulting map between colimits
\begin{equation}\label{e29}
| \cG |
\longrightarrow
| \cG^S |
\end{equation}
is an equivalence between spaces.

\end{itemize}
Consider the morphism $s\colon \ast \to S$ that selects the unique object for which there are no non-identity morphisms to it in the $n$-category $S$.  
This morphism is a section of the unique morphism $S\to \ast$ in $\bTheta_n$.  
This determines a retraction $|\cG^S| \xra{s^\ast} |\cG|$, for which~(\ref{e29}) is a section.  
The claim~($\dagger$) then follows upon showing the composite map among spaces
\[
| \cG^S | 
\xra{~s^\ast~}
|\cG|
\xra{ ~(\ref{e29})~ }
|\cG^S |
\]
is equivalent to the identity map.
We do this representably.
More precisely, we construct a filler among strict $n$-categories
\begin{equation}\label{e30}
\xymatrix{
S   \ar[r]^-{\cong} \ar[d]_-{!}  
&
\ast \times S  \ar[r]^-{0\times \id} 
&
c_n \times S  \ar@{-->}[d]_-{H_{S}}
&
\ast \times S  \ar[l]_-{1\times \id}
&
S \ar[l]_-{\cong}   \ar[dll]^-{\id}
\\
\ast  \ar[rr]^-{s}
&&
S   
&&
.
}
\end{equation}
Such a filler determines a filler in the diagram among presheaves on $\bTheta_n$:
\[
\xymatrix{
\cG^S   \ar[rr]^-{0}
&
&
c_n \times \cG^S  \ar@{-->}[d]  
& 
&
\cG^S \ar[ll]_-{1}  
\\
\cG     \ar[u]^-{!^\ast}  
&&
\cG^S     \ar[ll]^-{s^\ast}   \ar[urr]_-{\id}
&&
.
}
\]
The colimit functor $|-|\colon \PShv(\bTheta_n) \to \Spaces$ preserves products, and carries $c_n$ to $\ast$.
Applying this colimit functor to the above diagram results in the sought equivalence between the composite map $|\cG^S| \to |\cG| \to |\cG^S|$ and the identity map.

So we are left to construct a filler, $H_S\colon c_n \times S\to S$, in the diagram~(\ref{e30}).
We do this for all $S\in \bTheta_n$, by induction on $n$.
The base case that $n=0$ is tautological.  
So assume $n>0$.
Write $S=[p](S_1,\dots,S_p)$.
If $p=0$, then $S =\ast$ is terminal and there is a unique $H_S$.
So assume $p>0$.  
Because $S$ is a colimit of its maximal Segal cover, it is enough to define $H_S$ on each term of this colimit, compatibly.
So the functor $H_S$ is determined by declaring, for each consecutive morphism $c_n \to S$ in $\bTheta_n$, what is the composite functor among strict $n$-categories:
\[
H_S\circ (\id,f) \colon c_n
\xra{~ (\id , f) ~}
c_n \times S
\xra{~H_S~}
S  ~.
\]
So let $c_n \xra{f} S$ be a consecutive morphism in $\bTheta_n$ that is injective on sets of objects.  
So $f$ evaluates on objects as $f(0) = i-1$ and $f(1)=i$ for some $0<i\leq p$, and $f$ evaluates on strict $(n-1)$-categories of morphisms as ${\sf Hom}_f(0,1)\colon {\sf Hom}_{c_n}(0,1) = c_{n-1} \xra{f_i} S_i = {\sf Hom}_S( i-1 , i)$ for some consecutive morphism $f_i \colon c_{n-1}\to S_i$ in $\bTheta_{n-1}$.  
We declare the composite functor among strict $n$-categories
\[
H_S\circ (\id,f) \colon c_n
\xra{~ (\id , f) ~}
c_n \times S
\xra{~H_S~}
S
\]
as follows.
On objects, this composite functor is 
\[
H_S\circ (\id , f) (0) = 0
\qquad \text{ and } \qquad
H_S\circ (\id,f)(1) = f(1)   .
\]
On $(n-1)$-categories of morphisms, this composite functor,
\begin{eqnarray}
\nonumber
{\sf Hom}_{c_n}(0,1) 
&
=
&
c_{n-1}
\\
\nonumber
&
\xra{~(\id , f_i)~}
&
c_{n-1} \times S_i 
\\
\nonumber
&
=
&
{\sf Hom}_{c_n}(0,1)\times {\sf Hom}_S\bigl(f(0),f(1)\bigr)
\\
\nonumber
&
\xra{~{\sf Hom}_{H_S}(0,1)~}
&
{\sf Hom}_S\bigl( 0 , f(1)\bigr) = \underset{0<j\leq i} \prod S_j ~,
\end{eqnarray}
is defined by declaring the composition
\[
c_{n-1}
\xra{~(\id,f_i)~}
c_{n-1}\times S_i
\xra{~{\sf Hom}_{H_S}(0,1)~}
\underset{0<j\leq i} \prod S_j
\xra{~\pr~}
S_j
\]
to be the following.
If $j<i$, this latter composition is constant at the object $0\in S_j$.
If $j=i$, this latter composition is $H_{S_i}\circ (\id,f_i)$; note that, because $S_i\in \bTheta_{n-1}$, this has been defined by induction on $n$.
This completes the construction of $H_S$, for each $S\in \bTheta_n$, and therefore completes the proof of~(2).

\item 
{\bf Proof of~(3):}
Let $\cF\in \Shv^{\sf unv}(\bTheta_n)$ be a univalent Segal sheaf.  
We must show the morphism between univalent Segal sheaves
\begin{equation}\label{e28}
(\ref{e24})\colon 
\cF
\longrightarrow
\bigl| \cF\bigl(- \times \sE(\bullet) \bigr)\bigl|
=:L(\cF)
\end{equation}
is an equivalence.  
Let $S,T\in \bTheta_n$.
Consider the projection morphism $T\times \sE(S) \to T$ in the $\infty$-category $\Shv(\bTheta_n)$.
In the case that $T=\ast$, this is the case nearly by definition of the univalence condition.
The case of general $T$ follows from this $T=\ast$ case because the Bousfield localization $\Shv(\bTheta_n) \to \Shv^{\sf unv}(\bTheta_n)$ preserves products (as established in~\cite{rezk-n}).    
Using that $\cF$ is univalent, the resulting map between spaces
\[
\cF(T) 
\longrightarrow
\cF\bigl(T\times \sE(S)\bigr) 
\]
is an equivalence.  
It follows that the morphism~(\ref{e28}) is an equivalence, as desired.

\end{itemize}

\section{Using the formula to prove the main result}
We use the formula of Proposition~\ref{completion} for univalent-completion to prove Theorem~\ref{main.result.1}.
Specifically, we use that formula to prove Lemma~\ref{t9}, Lemma~\ref{t11}, and Lemma~\ref{t12}, then draw Theorem~\ref{main.result.1} as a consequence.

\subsection{Proof of Lemma~\ref{t9}}\label{sec.in.fCat}
Recall from Proposition~\ref{completion} the explicit description of the functor $(-_{|\bTheta_\bullet^{\op}})^{\widehat{~}}_{\sf unv} \colon \Shv(\bTheta_n) \to \Gamma\bigl(\Shv^{\sf unv}(\bTheta_\bullet)\bigr)$.  
Through that description, the problem is to show that, for each Segal sheaf $\cF\in \Shv(\bTheta_n)$, and for each $0\leq i \leq j \leq n$, the canonical functor between $(\infty,j)$-categories
\[
(\cF_{|\bTheta_i^{\op}})^{\widehat{~}}_{\sf unv}
\longrightarrow
(\cF_{|\bTheta_j^{\op}})^{\widehat{~}}_{\sf unv}
\]
is $i$-connective.  
So fix $\cF\in \Shv(\bTheta_n)$ and $0\leq i\leq j \leq n$.  
By Definition~\ref{def.connective} of \emph{$i$-connective}, we must show that, for each $0\leq k \leq i$, the solid diagram among $(\infty,j)$-categories
\begin{equation}\label{e16}
\xymatrix{
\partial c_k  \ar[rr]  \ar[d]  
&&
(\cF_{|\bTheta_i^{\op}})^{\widehat{~}}_{\sf unv} \ar[d]
\\
c_k  \ar[rr]  \ar@{-->}[urr]
&&
(\cF_{|\bTheta_j^{\op}})^{\widehat{~}}_{\sf unv}
}
\end{equation}
can be filled.
So fix such a $0\leq k\leq i$.

Consider the canonical diagram among spaces of functors:
\begin{equation}\label{e17}
\xymatrix{
\Cat_j\bigl( c_k , (\cF_{|\bTheta_i^{\op}})^{\widehat{~}}_{\sf unv} \bigr)  \ar[rr]  \ar[d]
&&
\Cat_j\bigl( c_k , (\cF_{|\bTheta_j^{\op}})^{\widehat{~}}_{\sf unv} \bigr)    \ar[d]
\\
\Cat_j\bigl( \partial c_k , (\cF_{|\bTheta_i^{\op}})^{\widehat{~}}_{\sf unv} \bigr)    \ar[rr]
&&
\Cat_j\bigl( \partial c_k , (\cF_{|\bTheta_j^{\op}})^{\widehat{~}}_{\sf unv} \bigr)     .
}
\end{equation}
Unwinding definitions, the problem of finding a filler in~(\ref{e16}) is identical to showing the resulting map to the pullback
\[
\Cat_j\bigl( c_k , (\cF_{|\bTheta_i^{\op}})^{\widehat{~}}_{\sf unv} \bigr)
\longrightarrow
\Cat_j\bigl( \partial c_k , (\cF_{|\bTheta_i^{\op}})^{\widehat{~}}_{\sf unv} \bigr)
\underset{  \Cat_j\bigl( \partial c_k , (\cF_{|\bTheta_j^{\op}})^{\widehat{~}}_{\sf unv} \bigr)   }  \times
\Cat_j\bigl( c_k , (\cF_{|\bTheta_j^{\op}})^{\widehat{~}}_{\sf unv} \bigr) 
\]
is surjective (on path components).

Recall Definition~\ref{d1} of the functor $\sE\colon \bTheta_n \to \PShv(\bTheta_n)$.  
For each $\ell\leq n$, denote its restriction
\[
\sE_{|\ell} \colon \bTheta_\ell \hookrightarrow \bTheta_n \xra{~\sE~} \PShv(\bTheta_n)~.
\]
The canonical morphism $\partial c_k \to c_k$ then determines, for each $\ell\leq n$, the morphism 
\[
\cF(c_k \times \sE_{|\ell}(\bullet)) 
\longrightarrow
\cF(\partial c_k \times \sE_{|\ell}(\bullet) )
\]
between $\ell$-groupoid objects in $\Spaces$.
Taking colimits results in a commutative diagram among spaces:
\[
\xymatrix{
\bigl| \cF(c_k \times \sE_{|i}(\bullet))  \bigr|  \ar[rr]  \ar[d]
&&
\bigl| \cF(c_k \times \sE_{|j}(\bullet))  \bigr|    \ar[d]
\\
\bigl| \cF(\partial c_k \times \sE_{|i}(\bullet))  \bigr|    \ar[rr]
&&
\bigl| \cF(\partial c_k \times \sE_{|j}(\bullet))  \bigr|  .
}
\]
Through Proposition~\ref{completion}, which identifies univalent-completion in terms of $\sE$, this commutative diagram among spaces is identified as the commutative diagram~(\ref{e17}).
So we must show that the canonical map between spaces
\begin{equation}\label{e18}
\bigl| \cF(c_k \times \sE_{|i}(\bullet))  \bigr| 
\longrightarrow
\bigl| \cF(\partial c_k \times \sE_{|i}(\bullet))  \bigr|  
\underset{  \bigl| \cF(\partial c_k \times \sE_{|j}(\bullet))  \bigr|  }  \times
\bigl| \cF(c_k \times \sE_{|j}(\bullet))  \bigr| 
\end{equation}
is surjective (on path components).
This map~(\ref{e18}) factors as the string of maps,
\begin{eqnarray}
\nonumber
\bigl| \cF(c_k \times \sE_{|i}(\bullet))  \bigr| 
&
\underset{\rm (a)} {~\simeq~}
&
\bigl| \iota^\ast  \cF(c_k \times \sE_{|j}(\bullet))  \bigr| 
\\
\nonumber
&
\underset{\rm (b)} {\xra{~\simeq~}}
&
\Bigl| \iota^\ast \Bigl( \cF(\partial c_k \times \sE_{|j}(\bullet)) 
\underset{   \cF(\partial c_k \times \sE_{|j}(\bullet))  }  \times
\cF(c_k \times \sE_{|j}(\bullet))  \Bigr)  \Bigr| 
\\
\nonumber
&
\underset{\rm (c)} {\xla{~\simeq~}}
&
\Bigl| \iota^\ast \Bigl(  \iota_!\iota^\ast \cF(\partial c_k \times \sE_{|j}(\bullet)) 
\underset{   \cF(\partial c_k \times \sE_{|j}(\bullet))  }  \times
\cF(c_k \times \sE_{|j}(\bullet))  \Bigr)  \Bigr| 
\\
\nonumber
&
\underset{\rm (d)} {\overset{\rm surjective}\longrightarrow}
&
\Bigl| \iota_!\iota^\ast \cF(\partial c_k \times \sE_{|j}(\bullet)) 
\underset{   \cF(\partial c_k \times \sE_{|j}(\bullet))  }  \times
\cF(c_k \times \sE_{|j}(\bullet))  \Bigr| 
\\
\nonumber
&
\underset{\rm (e)} {\xra{~\simeq~}}
&
\bigl|  \iota_! \iota^\ast \cF(\partial c_k \times \sE_{|j}(\bullet))  \bigr|  
\underset{  \bigl| \cF(\partial c_k \times \sE_{|j}(\bullet))  \bigr|  }  \times
\bigl| \cF(c_k \times \sE_{|j}(\bullet))  \bigr| 
\\
\nonumber
&
\underset{\rm (f)} {~\simeq~}
&
\bigl| \cF( \partial c_k \times \sE_{|i}(\bullet))  \bigr|  
\underset{  \bigl| \cF(\partial c_k \times \sE_{|j}(\bullet))  \bigr|  }  \times
\bigl| \cF(c_k \times \sE_{|j}(\bullet))  \bigr|    ~,
\end{eqnarray}
which we now explain.
We use the notation $\iota \colon \bTheta_i \hookrightarrow \bTheta_j$ for the standard fully faithful right adjoint functor.
This functor determines the adjunction $\iota_!\colon \PShv(\bTheta_i) \rightleftarrows \PShv(\bTheta_j)\colon \iota^\ast$, whose left adjoint is given by left Kan extension along $\iota^{\op}$, and whose right adjoint is given by restriction along $\iota^{\op}$.
With this notation, the equivalence~(a) is definitional, from the notation for $\sE_{|\ell}$.
The equivalence~(b) is a trivial pullback.
Because $\iota$ is fully faithful, the unit of this $(\iota_!,\iota^\ast)$-adjunction is an equivalence.
Together with the fact that the right adjoint $\iota^\ast$ preserves pullbacks, this establishes the equivalence~(c).
The map~(d) is the canonical one, induced by restriction along the functor $\iota$.
Because this functor $\iota$ is a right adjoint, for any presheaf $\cX$ on $\bTheta_j$, the canonical map between colimits $| \iota^\ast \cX | \to |\cX|$ is surjective (on path components).  
In particular, the map~(d) is surjective (on path components).
The equivalence~(f) is the fact that left Kan extensions compose, together with the definitional identification $\cF(\partial c_k\times \sE_{|i}(\bullet) ) = \iota^\ast \cF(\partial c_k\times \sE_{|j}(\bullet))$ as that supporting the equivalence~(a).
The equivalence~(e) follows through the same logic, which reduces from presheaves on $\bTheta_n$ to simplicial spaces, as in the final part of {\bf Proof of (1)}, in~\S\ref{sec.correct}.
This completes the proof of Lemma~\ref{t9}.

\subsection{Proof of Lemma~\ref{t11}}\label{sec.unit.proof}
Let $\cF\in \Shv(\bTheta_n)$ be a Segal sheaf.  
Following~\S\ref{sec.unit}, we must show, for each $T\in \bTheta_n$, that the diagram~(\ref{e21}) is a limit diagram.
Through Proposition~\ref{completion}, we can describe each instance of univalent-completion appearing in~(\ref{e21}) by the expression involving $\sE(\bullet)$:
\begin{equation}\label{e31}
\xymatrix{
\cF(T)  \ar[rrrr]  \ar[d]
&&&&
\bigl| \cF\bigl(T\times \sE(\bullet) \bigr) \bigr|   \ar[d]
\\
\cF(T_{<n})  \ar[rr]
&&
\sfN\Bigl( \bigl| \cF\bigl(-\times \sE(\bullet<n) \bigr) \bigr|  \Bigr) (T_{<n})   \ar[rr]
&&
\bigl| \cF\bigl(T_{<n} \times \sE(\bullet) \bigr) \bigr|    .
}
\end{equation}
is a limit diagram.
We do this by induction on $n$.
For $n=0$, this assertion is vacuously true.
So assume $n>0$.  
By induction on $n$, the bottom left horizontal morphism in~(\ref{e31}) is an equivalence.  
Using that the $\infty$-category $\Spaces$ is an $\infty$-topos, Theorem~6.1.0.6 of~\cite{HTT} grants that the square~(\ref{e31}) is a pullback square provided, 
for each $S\in \bTheta_n$, the square
\[
\xymatrix{
\cF(T)  \ar[rr]  \ar[d]
&&
\cF\bigl(T\times \sE(S) \bigr) \ar[d]
\\
\cF(T_{<n})  \ar[rr]
&&
\cF\bigl(T_{<n} \times \sE(S) \bigr)    
}
\]
is a pullback.
To show this square is pullback it is sufficient to show that the square among strict $n$-categories
\begin{equation}\label{e32}
\xymatrix{
T_{<n} \times \sE(S) \ar[rr]  \ar[d]
&&
T\times \sE(S)  \ar[d]
\\
T_{<n} \ar[rr]
&&
T    
}
\end{equation}
is a pushout in $\Shv(\bTheta_n)$.
Because $S\to \sE(S)$ is an epimorphism in $\Shv(\bTheta_n)$, it follows through the Cartesian Bousfield localization of univalent-completion (established in~\cite{rezk-n}) that the top horizontal morphism in~(\ref{e32}) is an epimorphism in $\Shv(\bTheta_n)$.
Thus, the square~(\ref{e32}) among strict $n$-categories is a pushout in $\Shv(\bTheta_n)$ provided the square among strict $n$-categories
\[
\xymatrix{
T_{<n} \times S \ar[rr]  \ar[d]
&&
T\times S  \ar[d]
\\
T_{<n} \ar[rr]
&&
T    
}
\]
is a pushout in $\Shv(\bTheta_n)$.
This latter square is trivially a pushout, after the Cartesian Bousfield localization of Segal completion (established in~\cite{rezk-n}).

\subsection{Proof of Lemma~\ref{t12}}\label{sec.counit.proof}
From the description of the counit in~\S\ref{sec.counit}, we must show, for each $0\leq i\leq n$, that the functor between $(\infty,i)$-categories
\[
{\rm counit}\colon 
\bigl( \sfN(\un{\cC})_{|\bTheta_i^{\op}} \bigr)^{\widehat{~}}_{\sf unv}
\longrightarrow
\cC_i
\]
is an equivalence.  
So fix $0\leq i\leq n$.  
Through the nearly definitional Observation~\ref{t23}, it is enough to show, for each $T\in \bTheta_i$, that the map between spaces
\begin{equation}\label{e35}
\bigl( \sfN(\un{\cC})_{|\bTheta_i^{\op}} \bigr)^{\widehat{~}}_{\sf unv}(T)
\longrightarrow
\cC_i(T)
\end{equation}
is an equivalence.  
So let $T\in \bTheta_i$.
Through Proposition~\ref{completion}, this map~(\ref{e35}) is identified as the map
\begin{equation}\label{e36}
\Bigl|  \sfN(\un{\cC})\bigl(T\times \sE(\bullet_{\leq i})\bigr) \Bigr|
\longrightarrow
\Bigl|  \cC_i\bigl(T\times \sE(\bullet_{\leq i})\bigr) \Bigr|
\xla{~\simeq~}
\cC_i(T)
\end{equation}
in which the leftward arrow is an equivalence because $\cC_i$ is, by definition, univalent.

We establish that the map~(\ref{e36}) is an equivalence using, for each $S\in \bTheta_i$, the following diagram among spaces
\[
\xymatrix{
\sfN(\un{\cC})\bigl(T\times \sE(S)\bigr)    \ar[d]   \ar[drr]  \ar[drrrr]
&&
&&
\\
\sfN(\un{\cC})\bigl(T_{<i}\times \sE(S)\bigr)    \ar[d]   \ar[drr]  \ar[drrrr]
&&
\Bigl|  \sfN(\un{\cC})\bigl(T\times \sE(\bullet_{\leq i})\bigr) \Bigr|    \ar[rr]_-{(a)}  \ar[d]
&&
\cC_i(T)    \ar[d]
\\
\sfN(\un{\cC})\bigl(T_{<i}\times \sE(S_{<i})\bigr)    \ar[drr]  \ar[drrrr]
&&
\Bigl|  \sfN(\un{\cC})\bigl(T_{<i}\times \sE(\bullet_{\leq i})\bigr) \Bigr|  \ar[rr]_-{(b)}  \ar[d]
&&
\cC_i(T_{<i})  \ar[d]
\\
&&
\Bigl|  \sfN(\un{\cC})\bigl(T_{<i}\times \sE(\bullet_{\leq i})\bigr) \Bigr|^{S_{<i}}  \ar[rr]  
&&
\cC_i(T_{<i})^{S_{<i}}  ,
}
\]
which we now explain.
This diagram is a representation of the $\infty$-category $[2]\times [2]$ in the $\infty$-category $\Spaces$, where the second factor selects the vertical sequences.  
\begin{itemize}
\item
For each $u\in [2]$, the representation $\{u\}\times \{0<1\}\to [2]\times [2]\to \Spaces$ selects the map that is induced by the canonical functor $T_{<i}\to T$ between $i$-categories. 

\item 
For each $u\in [2]$, the representation $\{u\}\times \{1<2\}\to [2]\times [2]\to \Spaces$ selects the map that is induced by the canonical functor $S_{<i}\to S$ between $i$-categories. 

\item
For each $v\in [2]$, the representation $\{0<1\}\to \{v\}\times [2]\times [2]\to \Spaces$ selects the map that is induced by the canonical map to a colimit.    

\item
For each $v\in [2]$, the representation $\{1<2\}\times \{v\}\to [2]\times [2]\to \Spaces$ selects the map that is that determined by~(\ref{e36}).  
In particular, the representation (a)$\colon \{1<2\}\times \{0\}\to [2]\times [2]\to \Spaces$ is precisely~(\ref{e36}).

\item
This entire diagram commutes because $\cC_i$ is univalent.  

\end{itemize}
So we must show that the map~(a) is an equivalence.  

We isolate some observations concerning surjectivity. 
\begin{itemize}
\item[{\bf (Surj 1):}]
Observation~\ref{t24} grants that, for $v = 0,1\in [2]$, the representation $\{0<2\}\times\{v\}\to [2]\times[2] \to \Spaces$ selects a map between spaces that is surjective (on path components).

\item[{\bf (Surj 2):}]
Because this functor $\iota\colon \bTheta_{0} \hookrightarrow \bTheta_i$ is a right adjoint, for any presheaf $\cF$ on $\bTheta_i$, the canonical map between colimits $\cF[0] = | \iota^\ast \cF | \to |\cF|$ is surjective (on path components). 
Applying this observation to $\cF=\sfN(\un{\cC})\bigl(T\times \sE(\bullet)\bigr)$ and to $\cF=\sfN(\un{\cC})\bigl(T_{<i}\times \sE(\bullet)\bigr)$ gives that, for $v=0,1\in [2]$, the representation $\{0<1\}\times\{v\}\to [2]\times[2] \to \Spaces$ selects a map between spaces that is surjective (on path components) in the case that $S=[0]$.
The general case for $S\in \bTheta_i$ follows from the 2-out-of-3 property for surjections, again using that $[0]\in \bTheta_i$ is a final object.  

\item[{\bf (Surj 3):}]
After the previous point {\bf (Surj 2)}, commutativity of the square selected by the representation $\{0<1\}\times \{1<2\}\to [2]\times [2]\to \Spaces$ implies the map selected by the representation $\{0<1\}\times\{2\}\to [2]\times [2]\to \Spaces$ is surjective over the image of the diagonal map, which is selected by the representation $\{1\}\times\{1<2\}\to [2]\times [2]\to \Spaces$.

\item[{\bf (Surj 4):}]
By the 2-out-of-3 property for surjections, we conclude that, for $v=0,1\in [2]$, the representation $\{1<2\}\times\{v\}  \to [2]\times[2] \to \Spaces$ selects a map that is surjective (on path components).

\end{itemize}

Now, it follows from Corollary~\ref{t1} that the representation $\{0<2\}\times \{1<2\} \to [2]\times [2] \to \Spaces$ selects a pullback diagram.
As established in {\bf Proof of~(1)} of~\S\ref{sec.correct}, the functor 
\[
\bTheta_i^{\op}\xra{~\sfN(\cC)\bigl(T_{<i}\times \sE(\bullet_{\leq i})\bigr) ~} \Spaces
\]
is an $i$-groupoid object.  
Precisely because $i$-groupoid objects in $\Spaces$ are effective, 
the representation $\{0<1\}\times \{1<2\} \to [2]\times [2]\to \Spaces$ selects a pullback diagram.  
Taking the case that $S = [1]$, the two squares $\{0<u\}\times\{0<1\}\to[2]\times [2]\to \Spaces$, for $u=1,2$, being pullback implies, for each point $x \in \sfN(\un{\cC})\bigl(T_{<i}\times \sE([1])\bigr)$, that the induced map between based loop spaces
\[
\Omega_{|x|} \Bigl|  \sfN(\un{\cC})\bigl(T_{<i}\times \sE(\bullet_{\leq i})\bigr) \Bigr| 
\xra{~\Omega (b)~}
\Omega_{(b)|x|} \cC_i(T_{<i})
\]
is an equivalence.  
Using that the $\infty$-topos $\Spaces$ is hypercomplete, we conclude from this, together with the above {\bf (Surj)} observations concerning the bottom horizontal arrow, that the map~(b) is an equivalence between spaces.

The {\bf (Surj)} observations give that each of the representations $\{0<1\}\times \{1\}\to [2]\times [2] \to \Spaces$ and $\{0<2\}\times \{1\}\to [2]\times [2]\to \Spaces$ is surjective (on path components).  
It follows, again, from Corollary~\ref{t1} that the representation $\{0<2\}\times \{0<1\} \to [2]\times [2] \to \Spaces$ selects a pullback diagram.
Established above is that the map~(b) is an equivalence.  
Using, again, that the $\infty$-topos $\Spaces$ is hypercomplete, we conclude from~(b) being an equivalence, that the map~(a) is an equivalence, as desired, provided the representation $\{0<1\}\times\{0<1\} \to [2]\times [2] \to \Spaces$ selects a pullback square.  
So it remains to show just that.

Let $S\to S'$ be a morphism in $\bTheta_i$.
Consider the evident square among strict $i$-categories:
\begin{equation}\label{e37}
\xymatrix{
T_{<i}\times \sE(S')  \ar[rr]  \ar[d]
&&
T \times \sE(S')  \ar[d]
\\
T_{<i}\times \sE(S)    \ar[rr]
&&
T \times \sE(S)  .
}
\end{equation}
Applying the left adjoint functor $(-_{|\bTheta_\bullet^{\op}})^{\widehat{~}}_{\sf unv}\colon \Shv(\bTheta_i) \to \fCat_i$ of~(\ref{e015}) to this square~(\ref{e37}) results in square among flagged $(\infty,i)$-categories.
Using the fact that the Bousfield localization implementing Segal completion is Cartesian (as established in~\cite{rezk-n}), this square in flagged $(\infty,i)$-categories is a pushout.  
From the definition of $\sfN$ as a restricted Yoneda functor, it follows that the square among spaces
\begin{equation}\label{e38}
\xymatrix{
\sfN( \un{\cC} ) \bigl( T \times \sE(S) \bigr)   \ar[rr]  \ar[d]
&&
\sfN(\un{\cC}) \bigl( T \times \sE(S') \bigr)  \ar[d]
\\
\sfN(\un{\cC}) \bigl(  T_{<i}\times \sE(S) \bigr)    \ar[rr]
&&
\sfN(\un{\cC})\bigl( T_{<i}\times \sE(S') \bigr)    
}
\end{equation}
is a pullback.
Using that the $\infty$-category $\Spaces$ is an $\infty$-topos, Theorem~6.1.0.6 of~\cite{HTT} gives that the resulting square involving colimits
\[
\xymatrix{
\sfN( \un{\cC} ) \bigl( T \times \sE(S) \bigr)   \ar[rr]  \ar[d]
&&
\Bigl|  \sfN(\un{\cC}) \bigl( T \times \sE(\bullet) \bigr) \Bigr|     \ar[d]
\\
\sfN(\un{\cC}) \bigl(  T_{<i}\times \sE(S) \bigr)    \ar[rr]
&&
\Bigl|   \sfN(\un{\cC})\bigl( T_{<i}\times \sE(\bullet) \bigr) \Bigr|   .
}
\]
is a pullback.
As this square is the representation $\{0<1\}\times \{0<1\}\to [2]\times [2]\to \Spaces$, this completes this proof.

\subsection{Proof of the main result (Theorem~\ref{main.result.1})}\label{sec.main.proof}
To prove Theorem~\ref{main.result.1}, it is enough to show that the unit and the counit of the adjunction~(\ref{e20}) are equivalences.  
Through the discussion in~\S\ref{sec.unit}, that the unit of the adjunction~(\ref{e14}) is an equivalence is exactly the statement of Lemma~\ref{t11}.
Through the discussion in~\S\ref{sec.counit}, that the counit of the adjunction~(\ref{e14}) is an equivalence is exactly the statement of Lemma~\ref{t12}.

\subsection{Proof of Corollary~\ref{main.corollary}}\label{sec.groupoid}
We prove Corollary~\ref{main.corollary}.  
Recall Definition~\ref{d6} of $n$-groupoid objects, and Definition~\ref{d4} of $n$-flagged $\infty$-groupoids.  

Let $0\leq i \leq n$.
Using that the standard fully faithful functor $\ast = \bTheta_0^{\op} \hookrightarrow \bTheta_i^{\op}$ is a fully faithful left adjoint, restriction and left Kan extension define a localization
\[
\Spaces = \PShv(\bTheta_0)
~\rightleftarrows~
\PShv(\bTheta_i)
\]
in which the left adjoint is fully faithful.
Evidently, this left adjoint factors as the commutative diagram among $\infty$-categories and fully faithful functors there among:
\begin{equation}\label{e26}
\xymatrix{
\Spaces   \ar@{-->}[rr]    \ar@{-->}[drr]  \ar@{-->}[d]
&&
i{\sf Gpd}[\cS]    \ar[d]^-{\rm Obs~\ref{t19}}
\\
\Shv^{\sf unv}(\bTheta_i)   \ar[rr]
&&
\Shv(\bTheta_i)   .
}
\end{equation}
Inspecting the definitions of these full $\infty$-subcategories of $\PShv(\bTheta_i)$ (as Bousfield localizations thereof) reveals that this is a limit diagram.
In other words, if a Segal sheaf on $\bTheta_i$ is both an $n$-groupoid object in $\Spaces$ and univalent complete, then it is a constant presheaf.

\begin{lemma}\label{t21}
For each $0\leq i\leq n$, the adjunction~(\ref{e20}) restricts through Observation~\ref{t19} as an adjunction:
\[
\xymatrix{
i{\sf Gpd}[\cS]     \ar@{-->}@(-,u)[rr]^-{|-|}  \ar[d]_-{\rm Obs~\ref{t19}}
&&
\Spaces \ar[ll]^-{(\ref{e26})}     \ar[d]^-{\rm constant}
\\
\Shv(\bTheta_i)    \ar@(-,u)[rr]^-{(-)^{\widehat{~}}_{\sf unv}} 
&&
\Shv^{\sf unv}(\bTheta_i)  \ar[ll]^-{\rm inclusion}   
}
\]

\end{lemma}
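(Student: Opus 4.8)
The plan is to deduce the restricted adjunction from the unrestricted one~(\ref{e20}) by the purely formal principle that an adjunction $L\dashv R$ restricts to an adjunction between full subcategories preserved by $L$ and by $R$, compatibly with the fully faithful inclusions. Here $L=(-)^{\widehat{~}}_{\sf unv}$ and $R$ is the inclusion $\Shv^{\sf unv}(\bTheta_i)\hookrightarrow\Shv(\bTheta_i)$, and the subcategories are the images of $i{\sf Gpd}[\cS]$ and of $\Spaces$ under Observation~\ref{t19} and under the constant-presheaf functor. Thus I must check three things: that the constant-presheaf functor $\Spaces\to\Shv^{\sf unv}(\bTheta_i)\hookrightarrow\Shv(\bTheta_i)$ lands in $i{\sf Gpd}[\cS]$; that $(-)^{\widehat{~}}_{\sf unv}$ carries $i{\sf Gpd}[\cS]$ into the constant presheaves; and that, so restricted, $(-)^{\widehat{~}}_{\sf unv}$ is identified with $|-|$. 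The first point is immediate from the commuting limit diagram~(\ref{e26}): a constant presheaf is both univalent and an $i$-groupoid object.

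For the second point, fix $\cG\in i{\sf Gpd}[\cS]$. By Proposition~\ref{completion} its completion is $\cG^{\widehat{~}}_{\sf unv}(T)\simeq\bigl|\cG(T\times\sE(\bullet))\bigr|$, and it is univalent by construction; so by the limit diagram~(\ref{e26}) it suffices to show that $\cG^{\widehat{~}}_{\sf unv}$ again satisfies the groupoid condition of Definition~\ref{d6}(2). Fix $0<k\le i$ and $S\in\bTheta_i$, and consider the presheaf $Y\mapsto\cG(Y\times\sE(S))$, i.e.\ the cotensor $\cG^{\sE(S)}$. Since $\sE(S)$ is an $i$-groupoid object (Observation~\ref{t22}) and $\cG$, being local with respect to the univalence maps~(\ref{e25}), is also local with respect to their products with $\sE(S)$ — because the localizations cutting out $\Shv(\bTheta_i)$ and $i{\sf Gpd}[\cS]$ are compatible with products (\cite{rezk-n}, as used throughout \S\ref{sec.correct}) — this cotensor is again an $i$-groupoid object. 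It therefore inverts~(\ref{e25}), so $\cG\bigl((c_{k-1}\wr\sE(c_1))\times\sE(S)\bigr)\to\cG(c_k\times\sE(S))$ is an equivalence; taking the colimit over $S=\bullet$ shows $\cG^{\widehat{~}}_{\sf unv}(c_{k-1}\wr\sE(c_1))\to\cG^{\widehat{~}}_{\sf unv}(c_k)$ is an equivalence. Hence $\cG^{\widehat{~}}_{\sf unv}$ is a univalent $i$-groupoid object, and~(\ref{e26}) forces it to be constant.

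For the third point I would first record the general fact that $|-|\colon\Shv(\bTheta_i)\to\Spaces$ inverts the unit of univalent-completion. This is formal: $|-|$ is left adjoint to the constant-presheaf functor, which factors through $\Shv^{\sf unv}(\bTheta_i)$ by~(\ref{e26}); hence for every Segal sheaf $\cF$ and every space $X$ one has $\Map(|\cF|,X)\simeq\Map_{\Shv}(\cF,\un X)\simeq\Map_{\Shv}(\cF^{\widehat{~}}_{\sf unv},\un X)\simeq\Map(|\cF^{\widehat{~}}_{\sf unv}|,X)$, so $|\cF|\xra{\simeq}|\cF^{\widehat{~}}_{\sf unv}|$. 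Applying this to $\cF=\cG$, and using that the colimit over $\bTheta_i^{\op}$ of a constant presheaf with value $X$ is $X$ (as $\bTheta_i$ has the terminal object $[0]$, hence weakly contractible classifying space), identifies the constant value of $\cG^{\widehat{~}}_{\sf unv}$ with $|\cG|$, naturally in $\cG$. This exhibits the restricted left adjoint as $|-|$ and the restricted right adjoint as the inclusion of constant presheaves of~(\ref{e26}), completing the verification.

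The main obstacle is the second point: ensuring that univalent-completion does not destroy the groupoid condition. Everything there rests on knowing that $\cG(-\times\sE(S))$ remains an $i$-groupoid object, which is exactly the statement that the localizations presenting $\Shv(\bTheta_i)$ and $i{\sf Gpd}[\cS]$ inside $\PShv(\bTheta_i)$ are Cartesian; I would import this from \cite{rezk-n}, as is done repeatedly in \S\ref{sec.correct}. Once this is in hand, the remaining steps are formal consequences of Proposition~\ref{completion} and the limit diagram~(\ref{e26}).
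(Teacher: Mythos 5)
Your argument is correct in outline but takes a genuinely different route from the paper's, and one step needs a better justification than the one you give. The paper never computes $\cG^{\widehat{~}}_{\sf unv}$ at all: it observes that the square of right adjoints (the two vertical inclusions together with the bottom inclusion and the top functor~(\ref{e26})) commutes, so the corresponding square of left adjoints lax-commutes, and then verifies strict commutativity by checking directly that the constant sheaf $|\cG|$ has the universal property of the univalent-completion of $\cG$. The key tool there is the right adjoint $(-)^{\sim}\colon \Shv(\bTheta_i)\to i{\sf Gpd}[\cS]$ to the inclusion of groupoid objects: for $\cC$ univalent, $\cC^{\sim}$ is constant, so any map $\cG\to\cC$ factors uniquely through $\cC^{\sim}$ and hence through $|\cG|$. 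This is purely formal and uses neither Proposition~\ref{completion} nor any compatibility of localizations with products. Your route instead shows that $(-)^{\widehat{~}}_{\sf unv}$ carries groupoid objects to constant presheaves by exhibiting $\cG^{\widehat{~}}_{\sf unv}$ as a univalent groupoid object via the formula of Proposition~\ref{completion}; this is more computational but yields the same conclusion, and your third point, identifying the constant value with $|\cG|$ via the terminal object $[0]\in\bTheta_i$, is clean and correct.

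The step to scrutinize is your claim that the cotensor $\cG^{\sE(S)}=\cG(-\times\sE(S))$ is again an $i$-groupoid object, which you reduce to the assertion that the localization of $\Shv(\bTheta_i)$ at the maps~(\ref{e25}) is compatible with products, citing~\cite{rezk-n}. What~\cite{rezk-n} establishes, and what \S\ref{sec.correct} actually uses, is the Cartesian property of the Segal and univalence localizations; the groupoid-object localization is a further localization, and its compatibility with products is neither in that reference nor proved anywhere in this paper. The statement is true, and closely related assertions are implicit in the {\bf Proof of~(2)} of \S\ref{sec.correct} (claim~($\dagger$) and the surrounding induction), but as written your proof rests on an unproved input with a misattributed citation. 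There is also a terminological slip: the maps~(\ref{e25}) are the groupoid-generating maps, whereas the univalence maps are~(\ref{e33}). Either supply the Cartesian-ness argument for the groupoid localization, or switch to the paper's formal argument, which sidesteps the issue entirely.
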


\begin{proof}
The top horizontal arrow $\Spaces \hookrightarrow i{\sf Gpd}[\cS]$ is a right adjoint; its left adjoint is the functor $|-|\colon i{\sf Gpd}[\cS]\to \Spaces$ given by taking the colimit of an $i$-groupoid object $\bTheta_i^{\op}\to \Spaces$.  
Because the straight diagram in the statement of the lemma commutes, there is a canonical lax-commutative diagram:
\[
\xymatrix{
i{\sf Gpd}[\cS]     \ar@(u,u)[rr]^-{|-|}  \ar[d]_-{\rm Obs~\ref{t19}}
&
\Uparrow
&
\Spaces   \ar[d]^-{\rm constant}
\\
\Shv(\bTheta_i)    \ar[rr]^-{(-)^{\widehat{~}}_{\sf unv}} 
&&
\Shv^{\sf unv}(\bTheta_i)      .
}
\]
It remains, then, to show that this lax-commutative diagram is, in fact, a commutative diagram.
So let $\cG\in i{\sf Gpd}[\cS]$ be an $i$-groupoid object.  
We must show that the canonical morphism between univalent Segal sheaves on $\bTheta_i$ under $\cG$,
\[
\cG^{\widehat{~}}_{\sf unv}
\longrightarrow
|\cG|  ~,
\]
is an equivalence.
Through the universal property of the univalent-completion of $\cG$, this is to show that, for each univalent Segal sheaf $\cC$ on $\bTheta_i$, each horizontal morphism in $\Shv(\bTheta_i)$,
\[
\xymatrix{
\cG  \ar[rr]  \ar[d]   \ar@{-->}[drr]
&&
\cC
\\
|\cG|  \ar@{-->}[urr]  \ar@{-->}[rr]
&&
\cC^{\sim}    ,  \ar[u]
}
\]
admits a unique factorization as in the upward diagonal arrow.  
The fully faithful functor $i{\sf Gpd}[\cS] \hookrightarrow \Shv(\bTheta_i)$ is a left adjoint; we will denote its right adjoint as $(-)^{\sim} \colon \Shv(\bTheta_i) \to i{\sf Gpd}[\cS]$.  
The definition of \emph{univalent} Segal sheaves on $\bTheta_i$ is just so that this adjunction restricts as an adjunction
\[
\Spaces 
~\rightleftarrows~
\Shv^{\sf unv}(\bTheta_i) \colon (-)^{\sim}  .
\]
This adjunction determines a unique downward diagonal factorization, and also ensures that the bottom right presheaf $\cC^\sim$ on $\bTheta_i$ is, in fact, constant.  
Because $\cC^\sim$ is constant, there is a unique factorization as in the bottom horizontal arrow.  
This establishes a unique factorization as in the upward diagonal arrow, as desired.
\end{proof}

\begin{remark}
Lemma~\ref{t21} articulates that the univalent-completion of an $i$-groupoid object $\bTheta_i^{\op}\xra{\cG}\Spaces$ is nothing other than its colimit $|\cG|$, regarded as a constant presheaf on $\bTheta_i$ which is thusly univalent and Segal.  

\end{remark}

\begin{lemma}\label{t20}
The adjunction~(\ref{e015}) restricts through Observation~\ref{t19} as an adjunction:
\[
\bigl| (-)_{|\bTheta_\bullet^{\op}} \bigr|  \colon  \nGpd[\cS]  
~\rightleftarrows~
\fGpd_n  \colon   \sfN     ~.
\]

\end{lemma}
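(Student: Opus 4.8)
The plan is to verify that the adjunction~(\ref{e015}) restricts to the indicated full $\infty$-subcategories, using the general principle that an adjunction $L\dashv R$ restricts to full subcategories $\cC'\subset \cC$, $\cD'\subset \cD$ as soon as $L(\cC')\subset \cD'$ and $R(\cD')\subset \cC'$, with restricted left adjoint $L|_{\cC'}$. Here $L=(-_{|\bTheta_\bullet^{\op}})^{\widehat{~}}_{\sf unv}$ and $R=\sfN$, and the subcategories are $\nGpd[\cS]\subset \Shv(\bTheta_n)$ and $\fGpd_n\subset \fCat_n$ of Observation~\ref{t19}.

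First I would treat the left adjoint, which is the clean half. Given an $n$-groupoid object $\cG$, restriction along $\bTheta_i\hookrightarrow \bTheta_n$ preserves the conditions of Definition~\ref{d6} (the cells and the maps~(\ref{e25}) lie in the image of the fully faithful functor), so each $\cG_{|\bTheta_i^{\op}}$ is an $i$-groupoid object. By Lemma~\ref{t21}, its univalent-completion $(\cG_{|\bTheta_i^{\op}})^{\widehat{~}}_{\sf unv}$ is nothing but the colimit $\bigl|\cG_{|\bTheta_i^{\op}}\bigr|$, regarded as a constant, hence $\infty$-groupoid, univalent Segal sheaf. Therefore $L(\cG)$ is a sequence of $\infty$-groupoids; since it lies in $\fCat_n$ by Lemma~\ref{t9}, Example~\ref{r3} (equivalently Definition~\ref{d4}) identifies it as an $n$-flagged $\infty$-groupoid. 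This shows $L(\nGpd[\cS])\subset \fGpd_n$ and simultaneously identifies the restricted left adjoint with $\bigl| (-)_{|\bTheta_\bullet^{\op}} \bigr|$.

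Next I would show the right adjoint restricts, i.e. that $\sfN(\un{X})$ satisfies condition~(2) of Definition~\ref{d6} for each $\un{X}=(X_0\to\cdots\to X_n)\in \fGpd_n$, arguing by induction on $n$ via the recursive pullback formula of Corollary~\ref{t1}. Since $c_m$ and $c_{m-1}\wr \sE(c_1)$ lie in $\bTheta_{n-1}$ for $m\le n-1$, and $\sfN(\un{X})_{|\bTheta_{n-1}^{\op}}\simeq \sfN(\un{X}_{\bullet\le n-1})$ by Observation~\ref{t14}, the inductive hypothesis applied to $\un{X}_{\bullet\le n-1}\in \fGpd_{n-1}$ handles condition~(2) in the range $0<m\le n-1$; the base case $n=1$ is Remark~\ref{r5}, where $\sfN(X_0\to X_1)$ is the Cech nerve of $X_0\to X_1$ and is a groupoid object in $\Spaces$. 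The only remaining case is the top cell $m=n$, where I would use the identifications of maximal subcategories $(c_n)_{<n}=\partial c_n$ and $(c_{n-1}\wr\sE(c_1))_{<n}\simeq c_{n-1}$ (the invertible top morphism collapses the two hemispheres to a single $(n-1)$-cell) to rewrite Corollary~\ref{t1} as the comparison
\[
\sfN(\un{X}_{\bullet<n})(c_{n-1})
\longrightarrow
\sfN(\un{X}_{\bullet<n})(\partial c_n)\underset{X_n(\partial c_n)}\times X_n
\]
induced by $\partial c_n\to c_{n-1}$; here I use that each $X_k$ is an $\infty$-groupoid, so that $X_n$ evaluated on any cell is $X_n$ and the two $X_n$-factors on the source collapse.

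I expect this top-cell comparison to be the main obstacle, and it is precisely where the connectivity hypotheses on $\un{X}$ enter. The displayed map is \emph{not} an equivalence on formal grounds alone—the underlying diagonal of $\sfN(\un{X}_{\bullet<n})(c_{n-1})$ into $\sfN(\un{X}_{\bullet<n})(\partial c_n)$ is an equivalence only under a monomorphism assumption that does not hold here—so one must show that the discrepancy is absorbed exactly by the $(n-1)$-connectivity of $X_{n-1}\to X_n$ (which identifies, up to the connectivity range, the space of paths in $\sfN(\un{X}_{\bullet<n})(c_{n-1})$ with the space of fillings in $X_n$). I would carry this out with the same techniques as in the proof of Lemma~\ref{t12}: effectivity of $(n-1)$-groupoid objects together with hypercompleteness of the $\infty$-topos $\Spaces$, applied to the groupoid object $\sfN(\un{X}_{\bullet<n})$ supplied by the inductive hypothesis. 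Once both inclusions $L(\nGpd[\cS])\subset\fGpd_n$ and $\sfN(\fGpd_n)\subset\nGpd[\cS]$ are in hand, the adjunction~(\ref{e015}) restricts as claimed; combined with Theorem~\ref{main.result.1}, the restricted unit and counit remain equivalences, yielding the equivalence $\nGpd[\cS]\simeq \fGpd_n$ asserted in Corollary~\ref{main.corollary}.
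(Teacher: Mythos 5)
Your overall strategy (restrict both adjoints of~(\ref{e015}) to the full subcategories of Observation~\ref{t19}) is the same as the paper's, and your treatment of the left adjoint via Lemma~\ref{t21} coincides with the paper's one-line argument. The gap is in the right-adjoint half. The top-cell step that you yourself flag as ``the main obstacle'' is not actually carried out: you defer it to ``the same techniques as in the proof of Lemma~\ref{t12}'' and assert that the discrepancy between $\sfN(\un{X})(c_{n-1}\wr\sE(c_1))$ and $\sfN(\un{X})(c_n)$ is ``absorbed exactly by the $(n-1)$-connectivity of $X_{n-1}\to X_n$.'' That attribution is wrong, and it signals that the intended argument would not close up as described. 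The groupoid condition on $\sfN(\un{X})$ has nothing to do with the connectivity hypotheses on $\un{X}$: already for $n=1$ the Cech nerve of an \emph{arbitrary} map of spaces $X_0\to X_1$ is a groupoid object, and in general $\sfN(X_\bullet)$ lands in $\nGpd[\cS]$ for every $X_\bullet\in\Fun([n],\Spaces)$ --- the paper's remark following the proof of Lemma~\ref{t20} makes exactly this point (the right adjoint is defined on all of $\Fun([n],\Spaces)$, with counit a relative connective cover). Connectivity enters only in Lemmas~\ref{t9}, \ref{t11} and~\ref{t12}, i.e.\ in making the unit and counit equivalences, not in making $\sfN$ land in groupoid objects. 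So the mechanism you propose to invoke cannot be the one doing the work, and no substitute argument is supplied.

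There are also two technical issues in the setup of your top-cell comparison. First, Corollary~\ref{t1} is stated for representable $T\in\bTheta_n$, whereas $c_{n-1}\wr\sE(c_1)$ is not representable; extending the pullback formula to it requires writing it as a colimit over its univalence diagram and commuting limits, which you do not address. Second, the identification $(c_{n-1}\wr\sE(c_1))_{<n}\simeq c_{n-1}$ holds only after univalent-completion in $\Cat_{n-1}$; at the level of the presheaf restriction to $\bTheta_{n-1}$ (which is what the formula of Corollary~\ref{t1} actually uses) one gets $\partial c_n$, the two hemispheres remaining distinct. For contrast, the paper avoids all of this by a purely formal argument: since $\sE$ is product-preserving strict groupoid-completion (Observations~\ref{t2} and~\ref{t22}) and the section associated to $\un{X}\in\fGpd_n$ takes values in $\infty$-groupoids, mapping out of $\sE(T)_{\leq\bullet}$ agrees with mapping out of $T_{\leq\bullet}$, whence $\sfN(\un{X})(\sE(T))\to\sfN(\un{X})(T)$ is an equivalence for every $T$, which implies condition~(2) of Definition~\ref{d6}. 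Your inductive route could perhaps be completed (e.g.\ by exhibiting the top simplicial direction of $\sfN(\un{X})$ as a Cech nerve), but as written the decisive step is missing and the stated reason it should work is not the right one.
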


\begin{proof}
Lemma~\ref{t21} implies, and identifies how, the restriction of $(-_{|\bTheta_\bullet^{\op}})^{\widehat{~}}_{\sf unv}$ to $\nGpd[\cS]$ factors through $\fGpd_n$.  
It remains to show that the restriction of $\sfN$ to $\fGpd_n$ factors through $\nGpd[\cS]$.

By direct inspection, there is a canonical lax-commutative diagram among $\infty$-categories,
\[
\xymatrix{
\bTheta_n  \ar[rr]  \ar[d]_-{\sE}
&&
\Cat_n  \ar[rr]  
&&
\fCat_n   \ar[rr]
&&
\Gamma\bigl( \Shv^{\sf unv}(\bTheta_\bullet) \to [n]\bigr) 
\\
\nGpd[\cS]  \ar@(d,d)[rrrr]_-{}
&&
\Uparrow
&&
\fGpd_n  \ar[u]  \ar[rr]
&&
\Fun([n],\Spaces)    , \ar[u]
}
\]
where the bottom curved arrow has been established in the first line of this proof, and the right horizontal arrows are the evident fully faithful functors.
For each $0\leq i\leq n$, the standard fully faithful functor $\Spaces \hookrightarrow \Cat_i$ is a right adjoint.  Their left adjoints assemble as a left adjoint to the standard fully faithful functor $\Fun([n],\Spaces) \hookrightarrow \Gamma\bigl( \Shv^{\sf unv}(\bTheta_\bullet) \to [n]\bigr)$ whose image consists of those sections that take values in $\infty$-groupoids.
Replacing the right upward arrow in the above diagram by this left adjoint determines another lax-commutative diagram among $\infty$-categories:
\[
\xymatrix{
\bTheta_n  \ar[rr]  \ar[d]_-{\sE}
&&
\Cat_n  \ar[rr]  
&&
\fCat_n   \ar[rr]
&&
\Gamma\bigl( \Shv^{\sf unv}(\bTheta_\bullet) \to [n]\bigr)  \ar[d]
\\
\nGpd[\cS]  \ar@(d,d)[rrrr]_{}
&&
\Uparrow
&&
\fGpd_n   \ar[rr]
&&
\Fun([n],\Spaces)  .
}
\]
Observation~\ref{t22} implies that this lax-commutative diagram is, in fact, a commutative diagram.  
Now, the functor $\sfN$ is defined as the restricted Yoneda functor along the composite functor $\bTheta_n \to \fCat_n$ appearing in the above diagram.
Commutativity of the above diagram thusly gives that, for each $n$-flagged $\infty$-groupoid $\un{X} = (X_0 \to \dots \to X_n)$, and each $T\in \bTheta_n$, that the canonical monomorphism between spaces
\[
\sfN(\un{X})(\sE(T)) 
\longrightarrow
\sfN(\un{X})(T) 
\]
is an equivalence.  
We conclude that flagged $(\infty,n)$-category $\sfN(\un{X})$ is, in fact, an $n$-flagged $\infty$-groupoid, as desired.  
\end{proof}

\begin{remark}
In Lemma~\ref{t20}, the left adjoint carries an $n$-groupoid object $\bTheta_n^{\op}\xra{\cG} \Spaces$ to the sequence of spaces $|\cG_{|\bTheta_0^{\op}}| \to |\cG_{|\bTheta_1^{\op}}| \to \dots \to |\cG_{|\bTheta_n^{\op}}|$. 
Lemma~\ref{t9} ensures that this sequence of spaces indeed satisfies the requisite connectivity conditions to be an $n$-flagged $\infty$-groupoid.  
The right adjoint in Lemma~\ref{t20} carries an $n$-flagged $\infty$-groupoid $X_0 \to \dots \to X_n$ to its $n$-Cech nerve, elaborated in Remark~\ref{r4}.

\end{remark}

\begin{remark}
Note that the right adjoint in Lemma~\ref{t20} is defined on the full $\infty$-subcategory $\Fun([n],\Spaces)\subset \Gamma\bigl(  \Shv^{\sf unv}(\bTheta_\bullet) \to [n]\bigr)$.
As so, for $X_\bullet \in \Fun([n],\Spaces)$ a sequence of spaces, the counit
\[
\bigl| \sfN(X_\bullet)_{|\bTheta_\bullet^{\op}}  \bigr|  
\longrightarrow
X_\bullet
\]
is the terminal $n$-flagged $\infty$-groupoid over $X_\bullet$.
In this way, this is counit is a relative connective cover of $X_\bullet$.  
Indeed, for $n=1$, and should $X_0 \simeq \ast$, then this morphism is $(\ast \to X_1') \to (\ast \to X_1)$ where $X_1' \to X_1$ is the canonical map from the connected component of $X_1$ through which $\ast \to X_1$ factors.
And for general $n$, yet $X_0\xra{\simeq} \dots \xra{\simeq} X_{n-1} \simeq \ast$, this morphism is $(\ast \to \dots \to  \ast \to X_n') \to (\ast \to \dots \to \ast \to X_n)$ where $X_n'\to X_n$ is the $n$-connective cover of the based space $\ast \to X_n$.

\end{remark}

\begin{proof}[Proof of Corollary~\ref{main.corollary}]
It remains to show that the unit and the counit of the adjunction of Lemma~\ref{t20} are equivalences.  
Through Lemma~\ref{t20}, it is enough to observe that the unit and the counit of the extended adjunction~(\ref{e015}) are equivalences.
That this is so is precisely Lemma~\ref{t11} and Lemma~\ref{t12}, respectively.  
\end{proof}

\end{document}